\documentclass[11pt]{article}

\usepackage[a4paper,margin=28mm]{geometry}
\usepackage[T1]{fontenc}
\usepackage{lmodern}
\usepackage{microtype}
\usepackage{amsmath,amssymb,amsthm,mathtools}
\usepackage{mathrsfs}
\usepackage{booktabs}
\usepackage{array}
\usepackage{tabularx}
\usepackage{aliascnt}
\usepackage{enumitem}
\usepackage{xcolor}
\usepackage[colorlinks=true,linkcolor=blue!55!black,citecolor=blue!55!black,
  urlcolor=blue!55!black,
  pdftitle={The normalized approximation function for multiple zeta-star values}]{hyperref}
\usepackage[nameinlink,noabbrev]{cleveref}
\usepackage{authblk}
\allowdisplaybreaks[2]
\numberwithin{equation}{section}

\makeatletter
\let\@date\@empty
\makeatother

\newtheorem{theorem}{Theorem}[section]
\newaliascnt{proposition}{theorem}
\newtheorem{proposition}[proposition]{Proposition}
\aliascntresetthe{proposition}
\newaliascnt{lemma}{theorem}
\newtheorem{lemma}[lemma]{Lemma}
\aliascntresetthe{lemma}
\newaliascnt{corollary}{theorem}
\newtheorem{corollary}[corollary]{Corollary}
\aliascntresetthe{corollary}
\newaliascnt{conjecture}{theorem}
\newtheorem{conjecture}[conjecture]{Conjecture}
\aliascntresetthe{conjecture}

\theoremstyle{definition}
\newaliascnt{definition}{theorem}
\newtheorem{definition}[definition]{Definition}
\aliascntresetthe{definition}

\theoremstyle{remark}
\newaliascnt{remark}{theorem}

\aliascntresetthe{remark}

\crefname{theorem}{Theorem}{Theorems}
\crefname{proposition}{Proposition}{Propositions}
\crefname{lemma}{Lemma}{Lemmas}
\crefname{corollary}{Corollary}{Corollaries}
\crefname{conjecture}{Conjecture}{Conjectures}
\crefname{definition}{Definition}{Definitions}
\crefname{remark}{Remark}{Remarks}

\newcommand{\R}{\mathbb{R}}
\newcommand{\NN}{\mathbb{N}}
\newcommand{\Sset}{\mathcal{S}}
\newcommand{\Tset}{\mathcal{T}}
\newcommand{\Dset}{\mathcal{D}}

\newcommand{\Lnorm}{\mathscr{L}}
\newcommand{\wt}{\operatorname{wt}}
\newcommand{\dep}{\operatorname{dep}}
\newcommand{\dist}{\operatorname{dist}}

\newcommand{\ones}[1]{\{1\}^{#1}}

\title{The normalized approximation function for multiple zeta-star values}
\author{Jiangtao Li}

\affil{School of Mathematics and Statistics, HNP-LAMA, Central South University, China}

\affil{\texttt{lijiangtao@csu.edu.cn}}

\begin{document}
\maketitle

\begin{abstract}
Motivated by the classical Lagrange spectrum and the reciprocal formulation of the Lagrange spectrum in continued-fraction theory, we introduce a normalized approximation function $\mathcal{N}(\alpha)$ for approximation by multiple zeta-star values. At each depth, the approximation error is minimized over all admissible indices and normalized by the binary scale determined by their weights. The function $\mathcal{N}(\alpha)$ is then obtained by taking the limit inferior as the depth tends to infinity. 

Using the  order structure of multiple zeta-star values, we establish the regularity and generic behavior of this function. We prove that the depthwise approximation functions are upper semicontinuous, that $\mathcal{N}$ is Borel measurable, and that its zero set is a dense $G_\delta$ subset of $(1,+\infty)$.  We also derive a natural-prefix approximation estimate showing that a large next digit produces an exceptionally good normalized approximation. Combined with the author’s metric results on Diophantine approximation of multiple zeta-star values, this gives a full-measure theorem under a logarithmically reinforced divergence condition and, in particular, proves that $\mathcal{N}(\alpha)=0$ for Lebesgue almost every $\alpha>1$. Finally, we  give some basic properties of  the image of the normalized approximation function and show that the image $\mathrm{Im}\; \mathcal{N}$ is dense on the extended half-line $[0,+\infty]$.

\end{abstract}

\noindent\textbf{Keywords.}
Multiple zeta-star values; normalized Diophantine approximation;
Lagrange spectrum; metric number theory.

\medskip
\noindent\textbf{2020 Mathematics Subject Classification.}
Primary 11M32; Secondary 11J70, 11J83.

\begingroup
\makeatletter
\renewcommand{\thefootnote}{}
\renewcommand{\@makefntext}[1]{%
  \noindent\hspace*{0em}#1%
}
\footnotetext{%
  Project funded by the National Natural Science Foundation of China
  (Grant No.12571009) and the Natural Science Foundation of Hunan
  Province, China (Grant No.2026JJ40003).%
}
\makeatother
\endgroup

\section{Introduction}\label{sec:intro}

\subsection{The theory of multiple zeta-star values}

For an admissible finite index
\[
 \mathbf{k}=(k_1,\ldots,k_r),
 \qquad k_1\ge2,\quad k_2,\ldots,k_r\ge1,
\]
the multiple zeta value and the multiple zeta-star value are defined,
respectively, by
\begin{equation}\label{eq:mzsv}
 \zeta(\mathbf{k})
 :=\sum_{n_1>\cdots>n_r\ge1}
 \frac{1}{n_1^{k_1}\cdots n_r^{k_r}},
 \qquad
 \zeta^\star(\mathbf{k})
 :=\sum_{n_1\ge\cdots\ge n_r\ge1}
 \frac{1}{n_1^{k_1}\cdots n_r^{k_r}}.
\end{equation}
The admissibility condition $k_1\ge2$ guarantees convergence.  The weight
and depth of $\mathbf{k}$ are
\[
 \wt(\mathbf{k})=k_1+\cdots+k_r,
 \qquad
 \dep(\mathbf{k})=r.
\]
Thus the star version differs from the ordinary multiple zeta value by
allowing equality between consecutive summation variables.  Equivalently,
$\zeta^\star(k_1,\ldots,k_r)$ is the sum of the ordinary multiple zeta
values obtained by replacing each comma between adjacent entries by either a
comma or a plus sign, with a plus sign merging the two entries.  For example,
\[
 \zeta^\star(a,b,c)
 =\zeta(a,b,c)+\zeta(a+b,c)+\zeta(a,b+c)+\zeta(a+b+c).
\]
  Identities for these values,
their algebraic comparison with ordinary multiple zeta values, and their
interpolation have been developed extensively; see
\cite{HoffmanMHS,IKOO,OZ,Yamamoto,Zagier}.

Define
\[
\mathcal{Z}^\star
 :=\big{\{}\zeta^\star(\mathbf{k})\;\big{|}\;
 \mathbf{k}\text{ is a finite admissible index}\big{\}}.
\]
Whereas much of the classical theory concerns algebraic relations among
these periods, the topology and metric distribution of $\mathcal{Z}^\star$ in the real
line have only recently begun to be studied systematically.  In
\cite{LiTopology}, the author introduced a total order
on admissible indices, proved that $\mathcal{Z}^\star$ is a countable dense subset of
$(1,+\infty)$, and calculated Hausdorff dimensions for several symbolically
defined subsets.  Hirose, Murahara, and Onozuka independently developed
multiple zeta-star values for indices of infinite length and established
closely related differentiability results \cite{HMOInfinite}.

Let
\begin{equation}\label{eq:Tset}
 \Tset
 :=\Bigl\{(k_1,k_2,\ldots)\in\NN^{\NN}:
 k_1\ge2,\ k_i\ge1\ (i\ge2),\quad
 k_1=2\Rightarrow k_s\ge2\text{ for some }s\ge2\Bigr\}.
\end{equation}
Thus $\Tset$ consists of all infinite admissible sequences beginning with an
entry at least $2$, except for the single sequence $(2,1,1,\ldots)$.  The
exceptional sequence is excluded because its finite-prefix values do not
represent a finite point of $(1,+\infty)$.  For
${\bf k}=(k_1,k_2,\ldots)\in\Tset$,  there is an order-compatible bijection 
\[
 \eta:\Tset\rightarrow(1,+\infty)
\]
 \begin{equation}\label{eq:eta}
 \eta({\bf k})
 :=\lim_{r\to\infty}\zeta^\star(k_1,\ldots,k_r).
\end{equation}
The limit exists for every $ {\bf k}\in\Tset$.
This order-compatible bijection  was found by the author \cite{LiTopology}, Hirose, Murahara and Onozuka \cite{HMOInfinite} independently.  We call
this bijection the {\bf{zeta-star correspondence}}.  Consequently, every
real number $\alpha>1$ has a unique infinite sequence
$\eta^{-1}(\alpha)$, while its finite prefixes provide a canonical sequence
of approximants.  The corresponding basic intervals are symbolic cylinders,
and their binary coordinates have scale comparable to $2^{-K_r}$, where
$K_r=k_1+\cdots+k_r$.

Subsequent work has explored several directions opened by the zeta-star
correspondence.  The author studied rational deformations and their derived sets,
metric structure, and relations with Cantor-type sets \cite{LiRational}; established a
Diophantine criterion and zero-one laws for approximation by multiple
zeta-star values
\cite{LiDA}; and used divided differences to construct complex-analytic
interpolations of finite multiple star harmonic sums, finite zeta-star
correspondences for real parameters, and injectivity conjectures for complex
variations \cite{LiComplex}.  The author and Yang investigated arithmetic sums and
products of infinite multiple zeta-star values with restricted indices and
proposed conjectures concerning algebraic points and the arithmetic geometry
of the resulting sumsets and product sets \cite{LY}.  Kamano \cite{kama} studied the order structures of multi-polylogarithms. Taken
together, these works suggest that the zeta-star correspondence is not merely
an order-theoretic parametrization: it also supports Diophantine, fractal,
arithmetic, and analytic questions analogous to those arising from continued
fractions and binary expansions.

\subsection{Reciprocal Lagrange motivation and normalized approximation function}

The present paper concerns the quantitative approximation problem
arising from the zeta-star correspondence.  Since $\mathcal{Z}^\star$ is dense,
unnormalized approximation by its elements is automatic; a meaningful
problem requires a complexity scale.  Both the weight of an index and the
binary length of its symbolic cylinder single out the normalization
$2^{\wt(\mathbf{k})}$.  This is reminiscent of continued-fraction
approximation, where denominators of convergents determine the natural
quadratic scale.

For $y\in\R$, write
\[
 \lVert y\rVert:=\min_{\ell\in\mathbb{Z}}|y-\ell|.
\]
For an irrational number $x$, the reciprocal Lagrange constant can be written
as
\[
 \xi(x):=\liminf_{n\to\infty}n\lVert nx\rVert,
\]
and the reciprocal of this quantity is governed by Perron's formula; see
\cite{CF,Matheus}.  The comparison motivating the present
normalization is geometric.  If
$\alpha=\eta(k_1,k_2,\ldots)$ and $K_r=k_1+\cdots+k_r$, then the finite prefix
$\zeta^\star(k_1,\ldots,k_r)$ is the natural approximant, while
$2^{-K_r}$ is the scale of its binary cylinder.

Let $\Sset_r$ be the set of admissible indices of depth $r$.  For $\alpha>1$,
define
\begin{equation}\label{eq:Delta-definition}
 \Delta_r(\alpha)
 :=\inf_{\mathbf{k}\in\Sset_r}
 2^{\wt(\mathbf{k})}
 \bigl|\alpha-\zeta^\star(\mathbf{k})\bigr|,
\end{equation}
and
\begin{equation}\label{eq:N-definition}
 \mathcal{N}(\alpha):=\liminf_{r\to\infty}\Delta_r(\alpha)
 \in[0,+\infty].
\end{equation}
The infimum is taken at fixed depth, while the weights are allowed to vary.
Thus $\mathcal{N}(\alpha)$ measures the best asymptotic error at each depth
after normalization by the intrinsic binary scale.  It is a
reciprocal-Lagrange-type observable rather than a direct identification with
the classical Lagrange spectrum.

For every $C>0$, the definition  of $\mathcal{N}(\alpha)$ is equivalently expressed as follows:
\[
 \mathcal{N}(\alpha)<C
 \quad\Leftrightarrow\quad
 \left|\alpha-\zeta^\star(\mathbf{k})\right|
 <\frac{C-\varepsilon}{2^{\wt(\mathbf{k})}}
\]
for some $\varepsilon\in(0,C)$ and infinitely many admissible indices
$\mathbf{k}$ whose depths tend to $+\infty$.  Hence
$\mathcal{N}(\alpha)$ quantifies \textbf {approximation of $\alpha$ by  multiple
zeta-star values at the natural binary scale}.

For a function $g:\NN\to[1,+\infty)$, define the limsup set
\begin{equation}\label{eq:Bg-intro}
 \mathcal{B}_g
 :=\left\{\alpha>1:\Delta_r(\alpha)<2^{-g(r)}
 \text{ for infinitely many }r\right\}.
\end{equation}
This formulation leads to Borel--Cantelli and Khintchine-type questions: one
asks when the normalized targets are hit only finitely often, when they are
hit for almost every $\alpha$, and how the answer depends on the interaction
between depth, weight, and symbolic distortion.

\subsection{Main results}

The main results developed in this paper are:
\begin{theorem}\label{thm:consolidated-main}
The following statements hold.
\begin{enumerate}[label=(\roman*),leftmargin=2.6em]
\item For each $r\ge1$, the map $\alpha\mapsto\Delta_r(\alpha)$ is upper
semicontinuous, and $\alpha\mapsto\mathcal{N}(\alpha)$ is Borel measurable.
Moreover,
\[
 \{\alpha>1:\mathcal{N}(\alpha)=0\}
\]
is a dense $G_\delta$ subset of $(1,+\infty)$.

\item If $\alpha=\eta(k_1,k_2,\ldots)$ and $K_r=k_1+\cdots+k_r$, then there
exist $C(\alpha)>0$ and $r_0(\alpha)$ such that, whenever
$r\ge r_0(\alpha)$ and $k_{r+1}\ge2$,
\begin{equation}\label{eq:main-prefix}
 2^{K_r}\left|\alpha-\zeta^\star(k_1,\ldots,k_r)\right|
 \le C(\alpha)r\,2^{1-k_{r+1}}.
\end{equation}

\item If $\sum_{r\ge1}2^{-g(r)}<+\infty$, then $\mathcal{B}_g$ has Lebesgue
measure zero.  If
\begin{equation}\label{eq:main-reinforced}
 \sum_{r=4}^{\infty}\frac{2^{-g(r)}}{r\log_2 r}=+\infty,
\end{equation}
then the complement of $\mathcal{B}_g$ in $(1,+\infty)$ has Lebesgue measure
zero.  In particular, $\mathcal{N}(\alpha)=0$ for Lebesgue almost every
$\alpha>1$.
\end{enumerate}
\end{theorem}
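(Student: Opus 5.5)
We treat the three parts separately, using the zeta-star correspondence $\eta$ and its order structure from \cite{LiTopology}, together with the metric results of \cite{LiDA}.

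\textbf{Part (i).} For each fixed $\mathbf{k}\in\Sset_r$, the map $\alpha\mapsto 2^{\wt(\mathbf{k})}|\alpha-\zeta^\star(\mathbf{k})|$ is continuous. An infimum of continuous functions is upper semicontinuous, so $\Delta_r$ is upper semicontinuous and hence Borel. Then $\mathcal{N}=\liminf_r\Delta_r=\sup_{R}\inf_{r\ge R}\Delta_r$ is a countable sup of countable infima of Borel functions, so it is Borel.

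For the zero set, write
\[
 \{\mathcal{N}=0\}=\bigcap_{m\ge1}\bigcap_{R\ge1}\bigcup_{r\ge R}\{\Delta_r<1/m\}.
\]
Each set $\{\Delta_r<1/m\}$ is open by upper semicontinuity, so the zero set is $G_\delta$. For density, it suffices by Baire to show that each open set $U_{m,R}=\bigcup_{r\ge R}\{\Delta_r<1/m\}$ is dense. Given a nonempty interval $I\subset(1,\infty)$ and $r\ge R$, choose $\mathbf{k}\in\Sset_r$ with $\zeta^\star(\mathbf{k})\in I$. Density of depth-$r$ values for large $r$ should follow from \cite{LiTopology}, for instance by taking a long prefix of $\eta^{-1}(\alpha_0)$ for some $\alpha_0\in I$. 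Then $\Delta_r(\zeta^\star(\mathbf{k}))=0$, so $\zeta^\star(\mathbf{k})\in U_{m,R}\cap I$.

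\textbf{Part (ii).} Write
\[
 \alpha-\zeta^\star(k_1,\ldots,k_r)=\lim_{s\to\infty}\bigl[\zeta^\star(k_1,\ldots,k_s)-\zeta^\star(k_1,\ldots,k_r)\bigr].
\]
The difference is a sum over chains $n_1\ge\cdots\ge n_r\ge n_{r+1}\ge\cdots$ whose tail is nonempty. Its dominant part is
\[
 \sum_{n_1\ge\cdots\ge n_r}\frac{1}{n_1^{k_1}\cdots n_r^{k_r}}\sum_{n_r\ge n_{r+1}}\frac{1}{n_{r+1}^{k_{r+1}}}\,(\cdots).
\]
We bound the tail factor $\prod_{j>r}$ by $\eta(\text{tail})$-type quantities, which are uniformly bounded because $\alpha\in\Tset$ gives an infinite admissible tail. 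The main contribution comes from $n_{r+1}=1$. That contribution is
\[
 \zeta^\star(k_1,\ldots,k_r)\bigl(\text{tail value at }1\bigr)-\bigl(\text{correction}\bigr),
\]
and the part with $n_{r+1}\ge2$ is $O(2^{-k_{r+1}})$ times a sum of the same shape.

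The key estimate is to show that the terms with $n_r=1$ dominate and are of size about $2^{-K_r}$ up to a factor polynomial in $r$. Since all of $n_1,\dots,n_r$ lie in $\{1,2\}$ at leading order, the count of such chains is $r+1$; this is where the factor $r$ comes from. This means bounding
\[
 \sum_{n_1\ge\cdots\ge n_r\ge n_{r+1}\ge2}\prod_i n_i^{-k_i}\le C\,r\,2^{-K_r}2^{-k_{r+1}}
\]
and the analogous term with $n_{r+1}=1$. The latter must be handled via cancellation against the $n_{r+1}=1$ part of the infinite tail, using that $k_{r+1}\ge2$ makes the tail strictly separated from $(\ldots,1,1,1,\ldots)$. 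I expect this cancellation/normalization step, extracting exactly the factor $2^{-K_r}$ rather than an unnormalized error, to be the main obstacle. I would first try comparing with the explicit cylinder endpoints $\zeta^\star(k_1,\dots,k_r+1)$ and the limit along $(k_1,\dots,k_r,1,1,\dots)$ from \cite{LiTopology}, whose gap is of order $2^{-K_r}$ times a polynomial in $r$.

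\textbf{Part (iii).} The convergence case is Borel--Cantelli. The set $\{\Delta_r<2^{-g(r)}\}$ is a union over $\mathbf{k}\in\Sset_r$ of intervals of length $2^{1-g(r)-\wt(\mathbf{k})}$. Summing over depth-$r$ indices with $k_1\ge2$ gives $\sum_{\mathbf{k}}2^{-\wt(\mathbf{k})}\le 1/2$, so the measure is at most $2^{-g(r)}$. Summability of these measures then gives measure zero.

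For the divergence case, by (ii) it suffices that
\[
 C(\alpha)\,r\,2^{1-k_{r+1}}<2^{-g(r)}
\]
for infinitely many $r$, i.e.
\[
 k_{r+1}\ge g(r)+\log_2 r+c
\]
infinitely often. The digits under $\eta^{-1}$ of Lebesgue-typical $\alpha$ satisfy approximately $\Pr(k_{r+1}\ge t)\asymp2^{-t}$ with quasi-independence; this is the input from \cite{LiDA}. The criterion then becomes
\[
 \sum_r\frac{2^{-g(r)}}{r}=\infty,
\]
which is implied by \eqref{eq:main-reinforced}. The extra $\log_2 r$ absorbs the $\alpha$-dependence of $C(\alpha)$ and $r_0(\alpha)$, handled by restricting to sets where $C(\alpha)\le M$ and letting $M\to\infty$.

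Finally, choosing $g\equiv m$ shows that $\liminf_r\Delta_r(\alpha)\le 2^{-m}$ almost everywhere for each $m$. Intersecting over $m$ gives $\mathcal{N}(\alpha)=0$ for almost every $\alpha>1$.
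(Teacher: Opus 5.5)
Your part (i) matches the paper. Your convergence half of (iii) takes a different, more self-contained route that is valid: since $\sum_{\mathbf{k}\in\Sset_r}2^{-\wt(\mathbf{k})}=\frac12$, you get $m\{\Delta_r<2^{-g(r)}\}\le 2^{-g(r)}$ and can apply Borel--Cantelli directly. The paper instead goes through $\mathcal{B}_g\subseteq\mathcal{F}_g$ and the zero--one law of \cite{LiDA}.

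The genuine gap is part (ii). Because the divergence half of (iii) and the almost-everywhere vanishing of $\mathcal{N}$ rest on (ii), they are not yet proved either.

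Your decomposition of $\alpha-x_r$ is misdirected. The difference $\zeta^\star(k_1,\ldots,k_s)-\zeta^\star(k_1,\ldots,k_r)$ is exactly the sum over chains with $n_{r+1}\ge2$, because the chains with $n_{r+1}=1$ reproduce $x_r$ identically. So there are no $n_{r+1}=1$ or $n_r=1$ terms to dominate or cancel, and every $n_i\ge2$ in what remains. Your claim that the tail factors are uniformly bounded also fails. If $k_{r+2}=1$, the factor $\sum_{n_{r+1}\ge n_{r+2}\ge\cdots\ge n_s\ge1}\prod_{i>r+1}n_i^{-k_i}$ is at least $\sum_{m\le n_{r+1}}1/m$.

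The missing idea is to bypass the tail using the order structure. Because $j=k_{r+1}\ge2$, $\alpha$ lies in the basic interval of $(k_1,\ldots,k_{r+1})$, so $x_r\le\alpha\le\zeta^\star(k_1,\ldots,k_r,j-1)$, and
\[
 \zeta^\star(k_1,\ldots,k_r,j-1)-x_r
 =\sum_{n_1\ge\cdots\ge n_r\ge n_{r+1}\ge2}\frac{1}{n_1^{k_1}\cdots n_r^{k_r}n_{r+1}^{j-1}}
 \le2^{1-j}\,\ell(k_1,\ldots,k_r).
\]
Your fallback of comparing with the endpoints of the depth-$r$ cylinder only gives $2^{K_r}|\alpha-x_r|\le\Lnorm(k_1,\ldots,k_r)$. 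That has no gain $2^{1-k_{r+1}}$; the gain comes from the depth-$(r+1)$ cylinder. Also, $\zeta^\star(k_1,\ldots,k_r+1)$ is not an endpoint of that cylinder, since it lies below $x_r$.

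The second missing ingredient is the bound $\Lnorm(k_1,\ldots,k_r)\le C(\mathbf{k})\,r$ for $r\ge t$, where $t$ is the first position at which $\mathbf{k}$ exceeds $(2,1,1,\ldots)$.
\begin{itemize}
\item The factor $r$ does not come from counting chains in $\{1,2\}$. Only the all-$2$ chain survives, and it contributes just $2^{-K_r-k_{r+1}}$.
\item The factor $r$ reflects that $\Lnorm$ is unbounded along runs of $1$'s. For example, $\Lnorm(2,\ones{r-1})=+\infty$, and $\Lnorm(3,\ones{d})\to+\infty$ by \cref{lem:natural-errors} applied to $\mathbf{k}=(2)$.
\item The paper proves the linear bound in three steps: coordinatewise monotonicity of $\Lnorm$ from \eqref{eq:Lnorm-series}; comparison with $(3,\ones{d})$ or $(2,\ones{t-2},2,\ones{d})$; and the recurrence bound $h_d(N)\le\min\{N-1,2^{d+1}\}$, which gives $\sum_N h_d(N)/N^2=O(d)$.
\item A merely polynomial bound $r^p$, as you anticipate, would not give (ii) as stated. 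For $p>1$ it would also not suffice for (iii), since you would then need $\sum_r2^{-g(r)}r^{-p}=+\infty$.
\end{itemize}

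Suppose (ii) is granted, together with the precise recurrence lemma of \cite{LiDA}: for every $H$ with $\sum_r2^{-H(r)}=+\infty$, almost every $\alpha$ has $k_{r+1}(\alpha)\ge\lfloor H(r)\rfloor+2$ infinitely often. That exact statement, not a heuristic about digit frequencies, is what you need. Then your exhaustion over $M\ge C(\alpha)$ with $H_M(r)=g(r)+\log_2r+\log_2M+1$ is correct. It even shows that $\sum_r2^{-g(r)}/r=+\infty$ already suffices; the paper instead absorbs $C(\alpha)$ with the extra $\log_2\log_2r$.
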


\begin{theorem}\label{imn} The image of $\mathcal{N}$ satisfies the following properties.
\begin{enumerate}[label=(\roman*),leftmargin=2.5em]\item The finite part of the image of $\mathcal{N}$ is dense in the nonnegative half-line:
\[
 \overline{\mathrm{Im}(\mathcal{N})\cap\mathbb{R}}=[0,+\infty).
\]
\item For every integer $n\ge 2$,
\[
\mathcal{N}(n)=n.
\]
\item If $\mathbf{k}=(k_1,\ldots,k_r)$ is finite and admissible, then
\begin{equation}\label{eq:main-finite-value}
 \mathcal{N}\bigl(\zeta^\star(\mathbf{k})\bigr)
 =2^{\wt(\mathbf{k})}
 \sum_{n_1\ge\cdots\ge n_r\ge2}
 \frac{n_r-1}{n_1^{k_1}\cdots n_r^{k_r}}.
\end{equation}
This value equals $+\infty$ exactly for
$\mathbf{k}=(2,\ones{r-1})$; otherwise it is finite and strictly larger than
$1$.  Consequently, every nonempty open interval contains points at which
$\mathcal{N}(\alpha)=0$ and finite multiple zeta-star points at which
$\mathcal{N}(\alpha)>1$ or $\mathcal{N}(\alpha)=+\infty$.  In particular,
$\alpha\mapsto\mathcal{N}(\alpha)$ is nowhere continuous when
$[0,+\infty]$ is endowed with its order topology.
\item For every $\alpha>1$,
\[
\mathcal N(\alpha)=+\infty
\quad\Leftrightarrow\quad
\alpha=\zeta^\star(2,\{1\}^{s-1})\;\mathrm{for\;some}\;s\geq1.
\]
\end{enumerate}
\end{theorem}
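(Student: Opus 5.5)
The plan is to prove the four parts in the order (iii), (iv), (ii), (i). Parts (ii) and (i) both rely on the exact depthwise analysis developed for (iii). Throughout I use the order-compatible bijection $\eta$, the cylinder structure of $\Tset$ (basic intervals of binary size comparable to $2^{-K_r}$), and the upper semicontinuity and natural-prefix estimate \eqref{eq:main-prefix} from \cref{thm:consolidated-main}.

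\textbf{Step 1: part (iii).} Fix $\mathbf{k}=(k_1,\ldots,k_r)$ and $\alpha=\zeta^\star(\mathbf{k})$. For depth $R>r$ I first use the total order on admissible indices to show that the minimizers in $\Delta_R(\alpha)$ are, for $R$ large, the two neighbours of $\alpha$ among depth-$R$ values.
\begin{itemize}
\item The right neighbour is of the form $(\mathbf{k},m,\ones{R-r-1})$ with $m\to\infty$, or a descendant of it.
\item The left neighbour comes from the predecessor cylinder, obtained by lowering the last entry and appending a long tail.
\end{itemize}
Any other depth-$R$ index either lies outside the cylinder of $\mathbf{k}$, which gives a uniformly large normalized error, or has strictly smaller weight relative to its distance from $\alpha$.

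For the right-hand candidates I write $\zeta^\star(\mathbf{k},\mathbf{j})-\zeta^\star(\mathbf{k})$ as the sum over $n_1\ge\cdots\ge n_r$ of $n_1^{-k_1}\cdots n_r^{-k_r}$ times a tail star sum with the contribution of the all-ones tail subtracted. Then I let the free entry tend to infinity. Only tail variables equal to $1$ or $2$ survive after multiplying by $2^{\wt}$, and summing the surviving terms over their possible positions should produce exactly the factor $n_r-1$, with the constraint $n_r\ge2$. This yields \eqref{eq:main-finite-value}. Passing to the limit is justified by dominated convergence, using the bound $(n_r-1)/n_r^{k_r}$ against the convergent outer sum. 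The $\liminf$ is then shown to be a genuine limit.

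The sum diverges precisely when $k_1=2$ and all other $k_i=1$. In that case it is comparable to $\sum_n (n-1)\,n^{-2}(\log n)^{r-1}/n^{0}$, and the divergence of this harmonic-type sum gives the value $+\infty$. When the sum is finite, the term $n_1=\cdots=n_r=2$ alone contributes $2^{\wt}\cdot 2^{-\wt}=1$, and further positive terms make the value strictly larger than $1$. The nowhere-continuity statement then follows by combining this with the density of $\mathcal{Z}^\star$ and the dense $G_\delta$ zero set from \cref{thm:consolidated-main}(i).

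\textbf{Step 2: part (iv).} If $\alpha\notin\mathcal{Z}^\star$, then $\eta^{-1}(\alpha)$ is infinite. I would use the natural prefixes together with the comparison of neighbouring cylinders to show $\Delta_r(\alpha)\le C$ along a subsequence. The reason is that the cylinder of $(k_1,\ldots,k_r)$ has length $\asymp 2^{-K_r}$ and contains $\alpha$, and one endpoint of that cylinder is a depth-$r$ value. The only subtlety is the constant in this comparison, which must be uniform along a subsequence of $r$. Here I would distinguish whether $k_{r+1}\ge2$ infinitely often, in which case \eqref{eq:main-prefix} applies directly, or whether the sequence eventually equals $1$. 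The latter case is the $\Tset$ tail $(\ldots,j,\ones{\infty})$, and it needs a direct computation. Combined with Step 1, this leaves only the points $\zeta^\star(2,\ones{s-1})$ with infinite value.

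\textbf{Step 3: part (ii).} I would identify $\eta^{-1}(n)$ explicitly. The identity $\zeta^\star(2,\ones{s-1})=s\,\zeta(s+1)$ and the order structure suggest that integers are limits of eventually periodic or eventually-$1$ tails. I then compute $\Delta_r(n)$ from the two neighbouring depth-$r$ values as in Step 1 and evaluate the resulting series in closed form, expecting a telescoping of $(n_r-1)$-type sums to give the value $n$.

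\textbf{Step 4: part (i).} Density of finite values I would obtain constructively. Given a target $t\in(0,\infty)$, I would build $\alpha=\eta(\mathbf{k})$ by concatenating finite blocks. Each block is chosen so that its associated $\Delta$-value is close to $t$, using the continuous dependence of \eqref{eq:main-finite-value}-type sums on appended parameters, together with the scaling $2^{1-k_{r+1}}$ from \eqref{eq:main-prefix}. Inserting separator digits forces all other depths to have $\Delta_r$ not smaller than these values. The value $0$ is already covered by \cref{thm:consolidated-main}.

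I expect the main obstacle to be the lower bound in Step 1, namely proving that no depth-$R$ index other than the two order-neighbours does better. The same issue recurs in the separator argument of Step 4. I would attack it with an explicit monotonicity lemma: replacing an index by its weight-$1$ successor roughly halves the gap, so normalized errors of non-neighbours are bounded below by those of the neighbours.
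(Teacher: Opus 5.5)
Your Step 1 takes the formula from the wrong side of $\alpha=\zeta^\star(\mathbf k)$, and the computation you describe cannot produce \eqref{eq:main-finite-value}. The depth-$(r+d)$ values lying above and close to $\alpha$ are the extensions $\mathbf b=(\mathbf k,c_1,\ldots,c_d)$. Keeping only the terms with $n_{r+1}=2$ and $n_{r+2}=\cdots=n_{r+d}=1$ gives $2^{\wt(\mathbf b)}(\zeta^\star(\mathbf b)-\alpha)\ge 2^{d-1}C_{\mathbf k}$, where $C_{\mathbf k}=2^{\wt(\mathbf k)}\sum_{n_1\ge\cdots\ge n_r\ge2}n_1^{-k_1}\cdots n_r^{-k_r}$. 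For your candidates $(\mathbf k,m,\ones{d-1})$ with $m\to\infty$, the surviving tails are $2=n_{r+1}\ge n_{r+2}\ge\cdots\ge n_{r+d}\ge1$, and the normalized error tends to $(2^d-1)C_{\mathbf k}$. So no factor $n_r-1$ appears, and these candidates are irrelevant for large $d$. The value $\Lnorm(\mathbf k)$ is realized from \emph{below}, by $q_d=\zeta^\star(k_1,\ldots,k_{r-1},k_r+1,\ones{d})$. Here the last entry is raised; lowering it moves the value up, towards $U(\mathbf k)$. Repeatedly using $\frac1n\sum_{n\ge m\ge1}1=1$ gives $\alpha-q_d=\sum_{n_1\ge\cdots\ge n_{r+d}\ge2}(n_{r+d}-1)/(n_1^{k_1}\cdots n_{r-1}^{k_{r-1}}n_r^{k_r+1}n_{r+1}\cdots n_{r+d})$. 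The recursion $h_{d+1}(N)=\frac2N\sum_{m=2}^{N}h_d(m)$ with $h_d(N)\uparrow N-1$ then shows that $B_d=2^{\wt(\mathbf k)+d+1}(\alpha-q_d)$ increases to $\Lnorm(\mathbf k)$ with $B_{d+1}<2B_d$ (\cref{lem:natural-errors}). That last inequality is the rigorous form of your ``halving'' heuristic. A lower competitor branching off at depth $r+e$ has normalized error at least $2^{d-e}B_e\ge B_d$, and branches separated within the first $r$ positions cost at least $2^{r+d+1}\delta$ for a fixed $\delta>0$. Hence $\Delta_{r+d}(\alpha)=B_d$ for large $d$.

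Steps 2--4 lack any mechanism that controls points outside $\mathcal Z^\star$.

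\textbf{Step 2.} \eqref{eq:main-prefix} only gives $\Delta_r(\alpha)\le C(\alpha)r\,2^{1-k_{r+1}}$, which is unbounded unless $k_{r+1}\gtrsim\log_2 r$ infinitely often. The premise that cylinders have normalized length $\asymp1$ is false: $\Lnorm(2,\ones{d})=+\infty$ and $\Lnorm(3,\ones{d})\to+\infty$. What you need is $\sup_r\Lnorm(k_1,\ldots,k_r)<\infty$ along the prefixes of the given $\alpha$. The paper gets finiteness from $\mathcal N(\Phi(y))=\Phi'(y)\Lambda(y)$ (\cref{prop:differentiable}), with $\Lambda(y)\le\frac12$ and $\Phi'(y)<\infty$ at non-dyadic $y$ (\cref{derivative}). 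Alternatively, you could pick $N$ with $k_1+\cdots+k_N\ge N+3$ and use monotonicity plus $h_j(n)\le n-1$ to bound $\Lnorm(k_1,\ldots,k_N,\ones{j})$ uniformly in $j$.

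\textbf{Step 3.} This misses both key inputs. The identity $\zeta^\star(2,\ones{s-1})=s\zeta(s+1)$ only places $n$ between $(n-1)\zeta(n)$ and $n\zeta(n+1)$. The proof needs the full expansion $\eta^{-1}(n)=(\{2,\ones{n-2}\}^{\infty})$, with binary coordinate $\frac12-\frac{1}{2(2^n-1)}$. This comes from the Ohno--Wakabayashi formula $\zeta^\star(\{2,\ones{n-2}\}^{a},1)=n\zeta(an+1)$, which also yields the derivative $n(2^n-1)$ there. The lower bound $\mathcal N(n)\ge n$ cannot come from fixed-depth order neighbours: at a non-dyadic point, a farther index of smaller weight may be the minimizer. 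It comes instead from the arithmetic separation $2^{w}|\beta^f(\mathbf k)-x_n|\ge1/(2^n-1)$ (odd numerator, \cref{lem:sep}), multiplied by that derivative.

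\textbf{Step 4.} The lower bound is again the whole difficulty, and ``separator digits'' does not provide it. The paper needs no block construction. Using the transfer formula together with $\Lambda(x)=\liminf_n\|2^nx\|$, the points $y=\frac14-2^{-q-2}+2^{-(q+2+L)}/(2^m-1)$ satisfy $\Lambda(y)=1/(2^m-1)$ and $\Phi'(y)\to S_q=2q+O(1)$ as $L\to\infty$. Hence the values $S_q/(2^m-1)$ lie in the closure of the image and are dense in $[0,+\infty)$.
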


\subsection{Further remarks}

For Lebesgue almost every irrational $x$, one has $\xi(x)=0$.  The classical
Lagrange spectrum is
\[
 \mathcal{L}_{\mathrm{Lag}}
 :=\left\{\frac{1}{\xi(x)}:
 0<\xi(x)<+\infty,\ x\in\R\setminus\mathbb{Q}\right\}.
\]
Its theory is closely connected with continued fractions, binary quadratic
forms, and the Markov spectrum.  Its lower part has a remarkable arithmetic
structure: the spectrum begins at $\sqrt{5}$, contains a discrete sequence of
values below $3$ related to Markov numbers, and has $3$ as its first
accumulation point.  At larger values the structure becomes increasingly
complicated, but Hall's theorem shows that the Lagrange spectrum eventually
contains an entire half-line.  These features make the Lagrange spectrum an
important meeting point of number theory, dynamical systems, and fractal
geometry; see \cite{CF,Hall,Matheus, Mor}.  

Inspired by the definition of Lagrange spectrum, we give the following definition.
\begin{definition}
Define the approximation spectrum of multiple zeta-star values as 
\[
\mathcal{L}_{\mathrm{star}}:=\left\{\frac{1}{\mathcal{N}(\alpha)}:
 0<\mathcal{N}(\alpha)<+\infty,\ \alpha \in(1,+\infty)\right\}.
\]
\end{definition} 
From the main results of Hall \cite{Hall}, the Lagrange spectrum $\mathcal{L}_{\mathrm{Lag}}$ contains the half-line $[6,+\infty)$. Subsequent work identified the maximal Hall ray. For more details about the Hall ray, see \cite{cus,Mor}. For the spectrum $\mathcal{L}_{\mathrm{star}}$, we propose the following conjectures:
\begin{conjecture}
There is a $c>0$ such that $(c,+\infty)\subseteq  \mathcal{L}_{\mathrm{star}}$.
\end{conjecture}
\begin{conjecture}
The complementary set $$(0,+\infty)- \mathcal{L}_{\mathrm{star}}$$ is non-empty.
\end{conjecture}

The paper is organized as follows.  \Cref{sec:preliminaries} records the
theory of zeta-star correspondence  and interval geometry.  \Cref{sec:topology} establishes
regularity and the residual-zero property.  The metric estimates and almost-everywhere theorem are proved next. \Cref{sec:integers} computes the values $\mathcal{N}(\alpha)$ at some special points. \Cref{sec:image} develops the binary interpretation and the closure of the image. The final section summarizes the main results and proposes the remaining
questions concerning the image and level sets of $\mathcal{N}(\alpha)$.

\section{Binary expansion and multiple zeta-star values}\label{sec:preliminaries}

In this section, we will review the order structure of multiple zeta-star values and zeta-star correspondence. The references are $\cite{LiDA,LiTopology}$.

\subsection{Infinite indices and the zeta-star correspondence}

The infinite admissible set $\Tset$, the finite indices set $\mathcal{Z}^\star$, and the
zeta-star correspondence $\eta$ were introduced in  \Cref{sec:intro}.  We
now record the binary coordinate and  other notations used throughout
the proofs.

Define the binary coordinate by 
\[
\beta: \mathcal{T}\rightarrow \left(0,\frac{1}{2}\right),\]
\begin{equation}\label{eq:beta-infinite}
 \beta({\bf k})
 =\sum_{j=1}^{+\infty}2^{-K_j}, \quad K_j=k_1+\cdots+k_j,\quad \forall\, j\geq 1.
 \end{equation}
There is a continuous strictly increasing bijection
\begin{equation}\label{eq:Phi-map}
 \Phi:\left[0,\frac{1}{2}\right]\longrightarrow[1,+\infty],
\end{equation}
where $[1,+\infty]$ carries its order topology, with
$\Phi(0)=1$, $\Phi\left(\frac{1}{2}\right)=+\infty$, and
\begin{equation}\label{eq:Phi-eta}
 \Phi\bigl(\beta({\bf k})\bigr)
 =\eta({\bf k}).
\end{equation}
This normalization is essentially the inverse of the function \[\tau: (1,+\infty)\rightarrow \left( 0,\frac{1}{2}\right),\]
\[
\tau (\alpha)=\beta(\eta^{-1}(\alpha))
\]
 defined by the author in Section $4$, \cite{LiTopology}.  Hirose, Murahara, and  Onozuka \cite{HMOInfinite} studied the differentiability  of the function 
    \[Z^\star(x)=\Phi\left(\frac{x}{2}\right)\]
 on $(0,1)$. 
 
 For $x\in (0,1)$, we write a binary expansion as
\[
 x=\sum_{j\ge1}a_j2^{-j},
 \qquad a_j\in\{0,1\}.
\]
When the expansion contains infinitely many zeros and ones, the derivative
formulas in Theorem $4.7$, Theorem $4.8$ of Hirose, Murahara and Onozuka  \cite{HMOInfinite}, written in the normalization
\eqref{eq:beta-infinite} form, are
\begin{equation}\label{eq:derivative-series}
(Z^\star)^\prime(x)=\frac{1}{2} \Phi'\left(\frac{x}{2}\right)
 =\frac12+\sum_{d=1}^{+\infty}
 a_d2^dC_d(a_1,\ldots,a_{d-1}),
\end{equation}
where
\begin{equation}\label{eq:Cd}
 C_d(a_1,\ldots,a_{d-1})
 :=\sum_{m_1\ge\cdots\ge m_d\ge3}
 \frac{a_1^{m_1-m_2}\cdots a_{d-1}^{m_{d-1}-m_d}}
 {m_1^2m_2\cdots m_d},
\end{equation}
with the convention $0^0=1$.  At a point with a terminating binary expansion, the
corresponding finite formula gives the right derivative.

The following proposition is essentially another  reformulation of the derivative formula  \eqref{eq:derivative-series}  of Hirose, Murahara and Onozuka.
\begin{proposition}\label{prop:der}One has 
\begin{equation}\label{der}
\Phi^\prime(y)= \lim_{r\rightarrow+\infty} \sum_{n_1\geq \cdots \geq n_r\geq 2}      \left(\frac{2}{n_1}\right)^{k_1}\cdots  \left(\frac{2}{n_r}\right)^{k_r}.
\end{equation}
Here \[y=\beta({\bf k})=\sum_{r=1}^{+\infty} \frac{1}{2^{k_1+\cdots+k_r}}, \quad {\bf k}\in \mathcal{T}\]
 and $y$ is not a dyadic rational number in $\left(0, \frac{1}{2}\right)$.
 For \[y=\beta^f({\bf k})=\sum_{s=1}^{r} \frac{1}{2^{k_1+\cdots+k_s}}, \quad k_1\geq 2, k_2,\cdots,k_r\geq 1,\] 
 one has 
 \begin{equation}\label{eq:fi}
 \partial_{+}\Phi(y)=\sum_{n_1\geq \cdots \geq n_r\geq 2}      \left(\frac{2}{n_1}\right)^{k_1}\cdots  \left(\frac{2}{n_r}\right)^{k_r}.
 \end{equation}
 \end{proposition}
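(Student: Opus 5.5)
We translate the HMO formula \eqref{eq:derivative-series} into the index coordinates $\mathbf{k}$, working first at a dyadic point and then passing to the limit. Since $Z^\star(x)=\Phi(x/2)$, we have $(Z^\star)'(x)=\frac12\Phi'(x/2)$. With $y=x/2$, \eqref{eq:derivative-series} becomes
\[
\Phi'(y)=1+\sum_{d\ge1}a_d\,2^{d+1}C_d(a_1,\ldots,a_{d-1}),
\]
where $a_j$ are the binary digits of $x=2y$. In terms of $\mathbf{k}$, the digit $a_j$ of $x$ equals $1$ exactly when $j+1=K_s$ for some $s$. So the ones of $x$ sit at positions $K_s-1$, and between consecutive ones there are $k_{s+1}-1$ zeros.

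\textbf{Step 1: the dyadic point.} Take $y=\beta^f(\mathbf{k})$ with $r$ ones. We aim to prove \eqref{eq:fi} by expanding the right-hand side of \eqref{eq:fi} in a form that visibly matches the finite HMO sum. The main tool is a telescoping (``level-by-level'') decomposition. Fix the outermost variable structure and split according to whether each $n_i$ equals $2$ or is at least $3$. Because the indices are ordered, the terms with $n_i=2$ form a tail block $n_{j+1}=\cdots=n_r=2$. Such a tail contributes the factor $(2/2)^{k_{j+1}+\cdots+k_r}=1$, so
\[
\sum_{n_1\ge\cdots\ge n_r\ge2}\prod_i\Bigl(\frac{2}{n_i}\Bigr)^{k_i}
=\sum_{j=0}^{r}\ \sum_{n_1\ge\cdots\ge n_j\ge3}\prod_{i\le j}\Bigl(\frac2{n_i}\Bigr)^{k_i}.
\]
The $j=0$ term is $1$, which matches the constant term. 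It then remains to identify the $j$-th summand with the contributions of the HMO sum with $d$ ranging over $K_{j}-1 \le d< K_{j+1}-1$.

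\textbf{Step 2: matching the $C_d$.} This is the main obstacle. In $C_d$, the factors $a_i^{m_i-m_{i+1}}$ with $a_i=0$ force $m_i=m_{i+1}$. The variables $m$ therefore collapse into blocks: one free variable per one-digit, with exponents given by block lengths. Concretely, if the ones of $(a_1,\ldots,a_{d-1})$ sit at $K_1-1,\ldots,K_{j-1}-1$ and $a_d=1$ with $d=K_j-1$, then $m_1^2m_2\cdots m_d$ collapses to $n_1^{k_1}n_2^{k_2}\cdots n_j^{k_j}$ over $n_1\ge\cdots\ge n_j\ge3$. The factor $2^{d+1}=2^{K_j}$ then supplies the product of $2^{k_i}$. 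Terms with $a_d=0$ vanish. Checking this exponent bookkeeping carefully is the delicate part: the extra square on $m_1$ accounts for $k_1\ge2$, and the shift by one between $x$ and $y$ must be tracked. After it is done, \eqref{eq:fi} follows, with the finite HMO formula interpreted as the right derivative.

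\textbf{Step 3: the general point.} For non-dyadic $y$, the HMO series is the limit of its partial sums, and these partial sums coincide with the depth-$r$ expressions of Step 1. Truncating the series after the $r$-th one gives exactly the $r$-fold sum. All terms are nonnegative, so the partial sums increase monotonically to $\Phi'(y)$, which gives \eqref{der}. We take the convergence and validity of the derivative formula at such points directly from HMO.
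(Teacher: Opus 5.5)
Your proposal is correct and follows essentially the paper's argument: you rewrite the HMO formula via $\Phi'(y)=2(Z^\star)'(2y)$, collapse the $m$-variables into blocks forced by the zero digits so that the $d=K_j-1$ term becomes $2^{K_j}\sum_{n_1\ge\cdots\ge n_j\ge3}n_1^{-k_1}\cdots n_j^{-k_j}$, and match this with the split of the depth-$r$ sum according to the tail block where $n_i=2$. The only difference is the order of presentation: you treat the dyadic point first and get the non-dyadic case as the limit of partial sums, whereas the paper sums the full series at a non-dyadic point and then says the dyadic case follows by the same analysis.
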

 \begin{proof}
For  \[y=\beta({\bf k})=\sum_{r=1}^{+\infty} \frac{1}{2^{k_1+\cdots+k_r}}, \quad {\bf k}\in \mathcal{T}\]
 and $y$ is not a dyadic rational number in $\left(0, \frac{1}{2}\right)$, define 
 \[
 y=\sum_{j\geq 1} b_j2^{-j},\quad b_j\in\{0,1\}.
 \]
 Clearly, $b_1=0$ and 
 \begin{equation}\label{eq:2m}
  2y=\sum_{j\geq 1} b_{j+1}2^{-j},\quad b_j\in\{0,1\}.
   \end{equation}
 By the derivative formula \eqref{eq:derivative-series} and \eqref{eq:2m}, it follows that 
 \[
 \begin{split}
 &\;\;\;\; \Phi^\prime(y)\\
 &=  2 (Z^\star)^\prime(2y)                       \\
 &=2\left[\frac{1}{2}+      \sum_{d=1}^{+\infty} b_{d+1} 2^{d}  \left(    \sum_{m_1\ge\cdots\ge m_d\ge3}
 \frac{b_2^{m_1-m_2}\cdots b_{d}^{m_{d-1}-m_d}}
 {m_1^2m_2\cdots m_d}       \right)    \right]  \\
 &=1+\sum_{d=1}^{+\infty} b_{d+1}2^{d+1} \left(    \sum_{m_1\ge\cdots\ge m_d\ge3}
 \frac{b_2^{m_1-m_2}\cdots b_{d}^{m_{d-1}-m_d}}
 {m_1^2m_2\cdots m_d}       \right)  . \end{split}
 \]
 Since 
 \[
 y=\sum_{r=1}^{+\infty} \frac{1}{2^{k_1+\cdots+k_r}}=\sum_{j=1}^{+\infty} \frac{b_j}{2^j}, \quad b_j\in\{0,1\},\]
 we have 
 \begin{equation}\label{eq:01}
 b_j=\begin{cases}
 0,&j\notin \{k_1,\cdots, k_1+\cdots+k_r,\cdots\}; \\
 1,& j\in \{k_1,\cdots, k_1+\cdots+k_r,\cdots\}.    \\
 \end{cases}
 \end{equation}
 Denote by 
 \[
 K_r=k_1+\cdots+k_r,\quad r\geq1.
 \]
 For $d+1=K_r$ and $b_{d+1}=1$ for some $r\geq 1$, one has 
 \[
 b_{d+1}2^{d+1} \left(    \sum_{m_1\ge\cdots\ge m_d\ge3}
 \frac{b_2^{m_1-m_2}\cdots b_{d}^{m_{d-1}-m_d}}
 {m_1^2m_2\cdots m_d}       \right)= 2^{K_r}\sum_{n_1\geq \cdots \geq n_r\geq 3}\frac{1}{n_1^{k_1} n_2^{k_2}\cdots n_r^{k_r}    }.
   \]
   Here the above formula holds because only the following terms
   \[
   m_1=\cdots=m_{K_1-1}
   \]
   \[
    m_{K_1}=\cdots =m_{K_2-1}
   \]
   \[
   \cdots
   \]
   \[
   m_{K_{r-1}}=\cdots =m_{K_r-1}
   \]
   on the left hand side give non-zero contributions.
 Therefore
 \begin{equation}\label{eq:su3}
 \begin{split}
 &\;\;\;\;\Phi^\prime(y)\\
 & =1+\sum_{r=1}^{+\infty} 2^{K_r} \left(   \sum_{n_1\geq \cdots \geq n_r\geq 3}\frac{1}{n_1^{k_1} n_2^{k_2}\cdots n_r^{k_r}    }
     \right)\\
 &=1+\sum_{r=1}^{+\infty} \sum_{n_1\geq\cdots \geq n_r\geq 3} \left(  \frac{2}{n_1}  \right)^{k_1}  \left(  \frac{2}{n_2}  \right)^{k_2}   \cdots  \left(  \frac{2}{n_r}  \right)^{k_r} .     \\
     \end{split}
 \end{equation}
 On the other hand, 
 \begin{equation}\label{eq:r2}
 \begin{split}
 &\;\;\;\;\sum_{n_1\geq \cdots \geq n_r\geq 2}      \left(\frac{2}{n_1}\right)^{k_1}\cdots  \left(\frac{2}{n_r}\right)^{k_r} \\
 &=  \left(    \sum_{\substack{n_1\geq \cdots \geq n_r\geq 2\\ n_1=\cdots=n_r=2   }} +   \sum_{\substack{n_1\geq \cdots \geq n_r\geq 2\\ n_1\geq 3, n_2=\cdots=n_r=2   }}+\cdots+   \sum_{\substack{n_1\geq \cdots \geq n_r\geq 2\\ n_1\geq \cdots\geq n_r\geq 3   }}      \right)    \left(\frac{2}{n_1}\right)^{k_1}\cdots  \left(\frac{2}{n_r}\right)^{k_r}   \\
 &=1+\sum_{s=1}^r   \sum_{n_1\geq \cdots \geq n_s\geq 3}    \left(\frac{2}{n_1}\right)^{k_1}\cdots  \left(\frac{2}{n_s}\right)^{k_s} . \end{split}\end{equation}
   Thus by \eqref{eq:su3} and \eqref{eq:r2}, the formula \eqref{der} is proved. By the same analysis, one can also prove \eqref{eq:fi}.
  \end{proof}

\subsection{Finite dyadic coordinate}

For an admissible index $\mathbf{k}=(k_1,\ldots,k_r)$, define
$K_j=k_1+\cdots+k_j$ and 
\begin{equation}\label{eq:beta-finite}
 \beta^f(\mathbf{k})
 =\sum_{j=1}^{r}2^{-K_j}.
\end{equation}
Let $\Dset_r$ be the set of dyadic rational numbers in $\left(0,\frac{1}{2}\right)$ whose terminating binary
expansion contains exactly $r$ digits equal to $1$:
\begin{equation}\label{eq:Dr}
 \Dset_r
 :=\left\{\sum_{j=1}^{r}2^{-n_j}:
 2\le n_1<\cdots<n_r\right\}.
\end{equation}
For $y\in\Dset_r$, let $\nu(y)$ be the position of the last binary digit equal
to $1$.

\begin{proposition}\label{prop:finite-coding}
The map $\mathbf{k}\mapsto\beta^f(\mathbf{k})$ is a bijection from $\Sset_r$ to
$\Dset_r$.  Moreover,
\begin{equation}\label{eq:finite-code-properties}
 \nu\bigl(\beta^f(\mathbf{k})\bigr)=\wt(\mathbf{k}),
 \qquad
 \Phi\bigl(\beta^f(\mathbf{k})\bigr)=\zeta^\star(\mathbf{k}).
\end{equation}
\end{proposition}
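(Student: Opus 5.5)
The bijection $\Sset_r\to\Dset_r$ is purely combinatorial, so I will do it first. Given $\mathbf{k}=(k_1,\ldots,k_r)\in\Sset_r$, the partial sums $K_1<K_2<\cdots<K_r$ are strictly increasing because every $k_j\ge1$. Admissibility $k_1\ge2$ gives $K_1\ge2$. Hence
\[
\beta^f(\mathbf{k})=\sum_{j=1}^r2^{-K_j}
\]
has exactly $r$ binary digits equal to $1$, located at positions $n_j=K_j$ with $2\le n_1<\cdots<n_r$. So $\beta^f(\mathbf{k})\in\Dset_r$. Such a number is automatically less than $\sum_{j\ge2}2^{-j}=\tfrac12$, since the expansion terminates.

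Conversely, take $y=\sum_{j=1}^r 2^{-n_j}\in\Dset_r$. The terminating binary expansion of a dyadic rational with finitely many ones is unique, so the positions $n_j$ are determined by $y$. I then set $k_1=n_1\ge2$ and $k_j=n_j-n_{j-1}\ge1$ for $j\ge2$. This produces the unique preimage in $\Sset_r$, which proves injectivity and surjectivity together. The last digit $1$ sits at position $n_r=K_r=\wt(\mathbf{k})$, which gives $\nu(\beta^f(\mathbf{k}))=\wt(\mathbf{k})$.

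The substantive claim is $\Phi(\beta^f(\mathbf{k}))=\zeta^\star(\mathbf{k})$. My plan is to rewrite the dyadic point as the $\beta$-value of an infinite index in $\Tset$ and then apply \eqref{eq:Phi-eta}. The tool is the binary identity $2^{-m}=\sum_{i\ge1}2^{-m-i}$. Replacing the last digit, I get
\[
\beta^f(k_1,\ldots,k_r)=\sum_{j=1}^{r-1}2^{-K_j}+2^{-(K_r+1)}+2^{-(K_r+2)}+\cdots,
\]
which is $\beta(k_1,\ldots,k_{r-1},k_r+1,1,1,\ldots)$. I will check that this infinite index lies in $\Tset$. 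If $r\ge2$, its first entry is $k_1\ge2$, and either $k_1\ge3$ or some later entry is at least $2$; here $k_r+1\ge2$ serves. If $r=1$, the index is $(k_1+1,1,1,\ldots)$ with $k_1+1\ge3$, which is allowed. In every case it is not the excluded sequence $(2,1,1,\ldots)$. Then \eqref{eq:Phi-eta} gives
\[
\Phi(\beta^f(\mathbf{k}))=\eta(k_1,\ldots,k_{r-1},k_r+1,\{1\}^\infty)=\lim_{s\to\infty}\zeta^\star(k_1,\ldots,k_r+1,\{1\}^{s}).
\]

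The main obstacle is proving the identity
\[
\lim_{s\to\infty}\zeta^\star(k_1,\ldots,k_{r-1},k_r+1,\{1\}^s)=\zeta^\star(k_1,\ldots,k_r).
\]
I would attack it directly. The key step is the telescoping identity
\[
\sum_{n\ge m_s\ge\cdots\ge m_1\ge1}\frac{1}{m_1\cdots m_s}\;=\;\prod_{j=1}^{n}\Bigl(1-\frac1j\Bigr)^{-1}\Big|_{\text{truncated}},
\]
which in practice I would replace by the cleaner generating-function statement: for fixed $n$, the sum of $1/(m_1\cdots m_s)$ over $n\ge m_1\ge\cdots\ge m_s\ge1$ is the complete homogeneous symmetric sum $h_s(1,\tfrac12,\ldots,\tfrac1n)$. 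Its generating function is $\prod_{j\le n}(1-t/j)^{-1}$. Evaluating at $t=1$ formally diverges because of the $j=1$ factor, so a direct limit is delicate. I would therefore instead fall back on the order-theoretic results of \cite{LiTopology} and \cite{HMOInfinite}, cited in \Cref{sec:preliminaries}, where $\Phi$ is constructed so that dyadic endpoints map to the finite values. The concrete fallback uses monotonicity and continuity of $\Phi$. The point $\beta^f(\mathbf{k})$ is the common endpoint of the cylinders $\beta(k_1,\ldots,k_r,m,\ldots)$ as $m\to\infty$, and $\eta(k_1,\ldots,k_r,m,\ldots)\to\zeta^\star(\mathbf{k})$ as $m\to\infty$. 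This convergence holds because the tail contributions beyond depth $r$ are bounded by $\sum_{n\ge1}n^{-m}(\cdots)-1\to0$, using the structure $\zeta^\star(\mathbf{k},m,\ldots)-\zeta^\star(\mathbf{k})$ being dominated by terms with an extra factor $n_{r+1}^{-m}$ where the term $n_{r+1}=1$ is the only survivor. Handling that survivor carefully, since the $n_{r+1}=1$ terms do not vanish, is where I expect the real work; I would bound them using the tail index's own values.
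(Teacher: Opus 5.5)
Your combinatorial half (strictly increasing $K_j$, recovery $k_1=n_1$, $k_j=n_j-n_{j-1}$, and $\nu=K_r=\wt(\mathbf{k})$) is correct and coincides with the paper's proof. So is your reduction $\beta^f(\mathbf{k})=\beta(k_1,\ldots,k_{r-1},k_r+1,\ones{\infty})$, including the check that this index lies in $\Tset$. The gap is the step you yourself single out: you never prove
\[
\lim_{s\to\infty}\zeta^\star(k_1,\ldots,k_{r-1},k_r+1,\ones{s})=\zeta^\star(\mathbf{k}).
\]
Deferring to the references ``where $\Phi$ is constructed so that dyadic endpoints map to the finite values'' only restates the claim. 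The paper makes exactly your reduction and then invokes this limit identity as a known fact. Citing that identity explicitly would match the paper, but your text neither cites it nor proves it.

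Your reason for abandoning the generating-function route is mistaken. The factor $(1-t)^{-1}$ causes no divergence; it only turns coefficients into partial sums. For fixed $n\ge1$,
\[
h_s\Bigl(1,\tfrac12,\ldots,\tfrac1n\Bigr)=[t^s]\,\frac{1}{1-t}\prod_{j=2}^{n}\Bigl(1-\frac{t}{j}\Bigr)^{-1}=\sum_{i=0}^{s}[t^i]\prod_{j=2}^{n}\Bigl(1-\frac{t}{j}\Bigr)^{-1}.
\]
The last product has nonnegative coefficients and radius of convergence $2$, so these partial sums increase with $s$ to its value at $t=1$, namely $\prod_{j=2}^{n}\frac{j}{j-1}=n$. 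Grouping by $n_r=n$ and applying monotone convergence,
\[
\zeta^\star(k_1,\ldots,k_{r-1},k_r+1,\ones{s})=\sum_{n_1\ge\cdots\ge n_r\ge1}\frac{h_s\bigl(1,\tfrac12,\ldots,\tfrac1{n_r}\bigr)}{n_1^{k_1}\cdots n_{r-1}^{k_{r-1}}n_r^{k_r+1}}\ \uparrow\ \sum_{n_1\ge\cdots\ge n_r\ge1}\frac{1}{n_1^{k_1}\cdots n_r^{k_r}}=\zeta^\star(\mathbf{k}).
\]
This closes the gap. The same observation $\frac1n\sum_{n\ge m\ge1}1=1$ underlies the computation of $D_d$ in the proof of \cref{lem:natural-errors}.

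Your cylinder fallback can also be completed, but the $n_{r+1}=1$ ``survivor'' is not an error term to be bounded. For $m\ge2$ we have $\beta(k_1,\ldots,k_r,m,\ones{\infty})=\beta^f(\mathbf{k})+2^{1-K_r-m}\downarrow\beta^f(\mathbf{k})$. Continuity of $\Phi$ then gives $\Phi(\beta^f(\mathbf{k}))=\lim_{m\to\infty}\eta(k_1,\ldots,k_r,m,\ones{\infty})$. Increasing prefix values together with \cref{thm:Li-input}(a) give the sandwich $\zeta^\star(\mathbf{k})<\eta(k_1,\ldots,k_r,m,\ones{\infty})\le\zeta^\star(k_1,\ldots,k_r,m-1)$. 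In $\zeta^\star(k_1,\ldots,k_r,m-1)$ the terms with $n_{r+1}=1$ sum to exactly $\zeta^\star(\mathbf{k})$. The rest, $\sum_{n_1\ge\cdots\ge n_{r+1}\ge2}n_1^{-k_1}\cdots n_r^{-k_r}n_{r+1}^{1-m}$, tends to $0$ by dominated convergence; for $m\ge3$ it is dominated by the $m=3$ series, which is at most $\zeta^\star(\mathbf{k})$. This variant bypasses the index $(k_1,\ldots,k_r+1,\ones{\infty})$ but relies on the order theorem and the continuity of $\Phi$, whereas the series argument above is self-contained.
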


\begin{proof}
The exponents $K_j$ in \eqref{eq:beta-finite} are strictly increasing, so
$\beta^f(\mathbf{k})\in\Dset_r$ and its last-one position is
$K_r=\wt(\mathbf{k})$.  Conversely, from
$2\le n_1<\cdots<n_r$ recover
\[
 k_1=n_1,
 \qquad
 k_j=n_j-n_{j-1}\quad(2\le j\le r).
\]
As 
\[
\beta^f(k_1,\cdots,k_r)=\beta(k_1,\cdots,k_{r-1},k_r+1,\{1\}^{\infty})
\]
and 
\[
\mathop{\mathrm{lim}}_{s\rightarrow+\infty} \zeta^{\star} (k_1,\cdots,k_{r-1},k_r+1,\{1\}^{s})=\zeta^\star(k_1,\cdots,k_{r-1},k_r),
\]
the formula $    \Phi\bigl(\beta^f(\mathbf{k})\bigr)=\zeta^\star(\mathbf{k})   $ follows immediately.
\end{proof}

The identity in \cref{prop:finite-coding} converts the normalized error into
\begin{equation}\label{eq:Delta-Phi-common}
 \Delta_r(\Phi(x))
 =\inf_{y\in\Dset_r}
 2^{\nu(y)}\,|\Phi(x)-\Phi(y)|,
 \qquad 0<x<\frac{1}{2}.
\end{equation}

\subsection{Multiple zeta-star values}

For an admissible finite index
\[
 \mathbf k=(k_1,\ldots,k_r),
 \qquad k_1\ge2,\quad k_2,\ldots,k_r\ge1,
\]
the associated multiple zeta-star value is
\[
 \zeta^\star(\mathbf k)
 =\sum_{n_1\ge\cdots\ge n_r\ge1}
 \frac{1}{n_1^{k_1}\cdots n_r^{k_r}}.
\]
Its weight and depth are, respectively,
\[
 \wt(\mathbf k)=k_1+\cdots+k_r,
 \qquad
 \dep(\mathbf k)=r.
\]
We write $\mathcal{Z}^{\star}$ for the set of all multiple zeta-star values. Denote by $\mathcal{S}$ the set of admissible finite indices.
Define 
\[
(k_1,\cdots,k_r,k_{r+1})\succ(k_1,\cdots,k_r)\]
and 
\[
(k_1,\cdots,k_r)\succ(l_1,\cdots,l_s)\]
for $$(k_1,\cdots,k_{i-1})=(l_1,\cdots,l_{i-1}),\quad k_i<l_i $$ and $1\leq i\leq \mathrm{min}\{r,s\}$. Then $\mathcal{S}$ is a totally ordered set. By the same analysis one can also define a total order on the set $\mathcal{T}$.

We use the following results from the  theory of zeta-star  correspondence
\cite{LiTopology}.

\begin{theorem}\label{thm:Li-input}
The following facts hold.
\begin{enumerate}[label=(\alph*),leftmargin=2.2em]
\item  The total order of $\mathcal{S}$ is compatible with the order of multiple zeta-star values:
\[
  (k_1,\cdots, k_r)\succ (l_1,\cdots,l_s)  \Leftrightarrow  \zeta^\star(k_1,\cdots,k_r)>\zeta^\star(l_1,\cdots,l_s).  \]
\item Let
\[
 \Tset=\bigl\{(k_1,k_2,\ldots): k_1\ge2,\ k_i\ge1,
 \text{ and if }k_1=2\text{ then some }k_s\ge2\ (s\ge2)\bigr\}.
\]
For $\mathbf k\in\Tset$, the limit
\[
 \eta(\mathbf k)=\lim_{r\to\infty}\zeta^\star(k_1,\ldots,k_r)
\]
exists, and $\eta:\Tset\to(1,\infty)$ is an order-preserving bijection.
\item As a result, $\mathcal{Z}^{\star}$ is countable and dense in $(1,\infty)$.
\end{enumerate}
\end{theorem}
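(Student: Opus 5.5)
The plan is to derive all three statements from two analytic facts about the prefix values. Throughout, $\mathbf p=(k_1,\ldots,k_{r-1})$ denotes a fixed prefix, and values of $\zeta^\star$ at indices with the common prefix $\mathbf p$ are compared.

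\emph{Fact 1 (monotonicity).} For every admissible $\mathbf k$ and every $k_{r+1}\ge1$ we have $\zeta^\star(\mathbf k,k_{r+1})>\zeta^\star(\mathbf k)$. The new inner sum $\sum_{n_{r}\ge n_{r+1}\ge1}n_{r+1}^{-k_{r+1}}$ is $\ge1$, and it is $>1$ as soon as $n_r\ge2$. Likewise $\zeta^\star(\mathbf p,a)$ is strictly decreasing in $a$.

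\emph{Fact 2 (the tail identity).} For $a\ge1$,
\[
\lim_{s\to\infty}\zeta^\star(\mathbf p,a+1,\{1\}^s)=\zeta^\star(\mathbf p,a),
\]
and the convergence is increasing. To prove it, write the inner sum over $n_r\ge m_1\ge\cdots\ge m_s\ge1$ as the star harmonic sum $S^\star_{n_r}(\{1\}^s)$. This is the $t^s$-coefficient of $\prod_{j=1}^{n}(1-t/j)^{-1}$ with $n=n_r$. The smallest pole of this product is the simple pole at $t=1$. Its residue contribution gives $S^\star_n(\{1\}^s)\nearrow\prod_{j=2}^n(1-1/j)^{-1}=n$. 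Monotone convergence then turns $n_r^{-(a+1)}\cdot n_r$ into $n_r^{-a}$.

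For (a), let the two indices first differ at position $i$. If one index is a proper prefix of the other, Fact 1 settles the comparison. Otherwise suppose $k_i<l_i$ with common prefix $\mathbf p$. By Facts 1 and 2, every finite extension of $(\mathbf p,l_i)$ has value strictly below $\lim_s\zeta^\star(\mathbf p,l_i,\{1\}^s)=\zeta^\star(\mathbf p,l_i-1)\le\zeta^\star(\mathbf p,k_i)$. The case $i=1$ is the same computation with an empty prefix. Every extension of $(\mathbf p,k_i)$ has value $\ge\zeta^\star(\mathbf p,k_i)$. This gives the strict inequality, and since $\succ$ is total the equivalence follows.

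For (b), the limit $\eta(\mathbf k)$ exists because the prefix values increase (Fact 1) and are bounded. If some $k_j\ge2$ with $j\ge2$, the bound is $\zeta^\star(k_1,\ldots,k_{j-1},k_j-1)$. If $k_1\ge3$, every value beginning with $k_1$ is below $\zeta^\star(k_1-1)$ by the argument of (a). This is exactly where the excluded sequence $(2,1,1,\ldots)$ fails.

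Order preservation follows from the interval picture of (a). For $\mathbf k,\mathbf l\in\Tset$ with $\mathbf k\succ\mathbf l$, a further digit $\ge2$ of the larger index pushes its limit strictly inside the open interval, so the limits are strictly separated. This also gives injectivity.

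\emph{Main obstacle.} Surjectivity onto $(1,\infty)$ is the step I expect to be hardest. Given $\alpha>1$, I choose the digits greedily, taking $k_i$ minimal with $\alpha$ lying in the cylinder of $(k_1,\ldots,k_i)$. Facts 1 and 2 show the cylinders tile the parent interval, since they are the consecutive intervals $[\zeta^\star(\mathbf p,a),\zeta^\star(\mathbf p,a-1))$ whose union is $[\zeta^\star(\mathbf p),\zeta^\star(\mathbf p,k-1))$. The real issue is proving that cylinder lengths tend to $0$. I would first try the explicit bound
\[
\zeta^\star(\mathbf p,a-1)-\zeta^\star(\mathbf p,a)=\sum_{n_1\ge\cdots\ge n_r\ge2}\frac{1-n_r^{-1}}{n_1^{k_1}\cdots n_{r-1}^{k_{r-1}}\,n_r^{a-1}}.
\]
This sum is dominated by terms with all $n_i\ge2$ and total exponent $K_r-1$, so it is $\ll r\,2^{-K_r}$-type small whenever $K_r\to\infty$. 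If that proves delicate, I would fall back on the continuity of $\Phi$ from \eqref{eq:Phi-eta}.

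For (c), countability holds because finite indices form a countable set. Density holds because, for $\alpha=\eta(\mathbf k)$, the prefix values $\zeta^\star(k_1,\ldots,k_r)$ lie in $\mathcal Z^\star$, converge to $\alpha$ and are distinct from it, and every $\alpha>1$ arises in this way by (b).
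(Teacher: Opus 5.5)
The paper does not prove this theorem; it imports it from the author's earlier work on the topology of $\mathcal Z^\star$, so your argument has to stand on its own. Part (a), the existence of $\eta(\mathbf k)$, order preservation and injectivity are sound, and so is (c) once (b) is available. (In the order-preservation step, strictness comes from $\eta(\mathbf k)>\zeta^\star(\mathbf p,k_i)$, not from a later digit $\ge2$.)

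The genuine gap is surjectivity. You correctly identify it as the crux but do not prove it.

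It starts with an orientation error. The $\eta$-values of codes beginning with $(\mathbf p,a)$ form $(\zeta^\star(\mathbf p,a),\zeta^\star(\mathbf p,a-1)]$, not $[\zeta^\star(\mathbf p,a),\zeta^\star(\mathbf p,a-1))$. Indeed, $(\mathbf p,a,\{1\}^\infty)$ attains the right endpoint by your Fact 2, while every extension of $(\mathbf p,a)$ lies strictly above $\zeta^\star(\mathbf p,a)$. With your convention the tiling claim is false. Since $\zeta^\star(\mathbf p,a)$ decreases to $\zeta^\star(\mathbf p)$ without reaching it, the point $\zeta^\star(\mathbf p)$ lies in your parent interval but in none of its children. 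The greedy procedure therefore stalls exactly at the points of $\mathcal Z^\star$, whose codes are $(k_1,\ldots,k_{s-1},k_s+1,\{1\}^\infty)$. Use $Z_{\mathbf k}=(\zeta^\star(\mathbf k),U(\mathbf k)]$ as in the paper. Then the children do tile the parent, and the greedy code is never $(2,1,1,\ldots)$ because $\zeta^\star(2,\{1\}^r)\to+\infty$.

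The substantive missing step is that the nested cylinders shrink to a point. Your sketch for $\ell\ll r\,2^{-K_r}$ is not an argument, and the bound cannot come from the leading terms of the series. The same series is $+\infty$ for $(2,\{1\}^{r-1})$, and $2^{K_r}\ell(3,\{1\}^d)\to+\infty$ (this is \cref{lem:natural-errors} with $\mathbf k=(2)$). So any bound must depend on the first position $t$ at which the code exceeds $(2,1,1,\ldots)$. Your fallback to continuity of $\Phi$ is circular. $\Phi$ is the inverse of $\tau=\beta\circ\eta^{-1}$, and its being a continuous bijection onto $[1,+\infty]$ is equivalent to the surjectivity of $\eta$.

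You can close the gap with your own Facts.
\begin{itemize}
\item By nestedness it suffices to consider positions $r$ with $k_r\ge2$. If the code instead ends in $1$'s, Fact 2 already shows the lengths tend to $0$.
\item At such positions your series $\sum_{n_1\ge\cdots\ge n_r\ge2}(n_r-1)\prod_i n_i^{-k_i}$ equals the cylinder length and is nonincreasing in every $k_i$.
\item For $r>t\ge2$ the prefix dominates $(2,\{1\}^{t-2},2,\{1\}^{r-t-1},2)$ coordinatewise, and Facts 1 and 2 give
\[
 \ell(k_1,\ldots,k_r)\le\zeta^\star(2,\{1\}^{t-1})-\zeta^\star(2,\{1\}^{t-2},2,\{1\}^{r-t-1})\longrightarrow0 .
\]
\item When $t=1$, use $(3,\{1\}^{r-2},2)$ instead, with the bound $\zeta(2)-\zeta^\star(3,\{1\}^{r-2})$.
\end{itemize}
Alternatively, \cref{lem:linear-cylinder} gives $\ell(k_1,\ldots,k_r)\le C(\mathbf k)\,r\,2^{-K_r}\le C(\mathbf k)\,r\,2^{-r-1}$.
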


\subsection{Basic intervals}

For $\mathbf{k}=(k_1,\ldots,k_r)$, let
\[
 j=\max\{i:k_i\ge2\}.
\]
Define the upper endpoint by
\begin{equation}\label{eq:U}
 U(\mathbf{k})=
 \begin{cases}
 +\infty,&\mathbf{k}=(2,\ones{r-1}),\\[1mm]
 \zeta^\star(k_1,\ldots,k_{j-1},k_j-1),&\text{otherwise}.
 \end{cases}
\end{equation}
The corresponding basic interval is
\[
 Z_{\mathbf{k}}=(\zeta^\star(\mathbf{k}),U(\mathbf{k})].
\]
At each fixed depth these intervals are pairwise disjoint and cover
$(1,+\infty)$.  By Lemma $4.4$ in \cite{LiDA} (or by the proof of \Cref{lem:natural-errors}  in \Cref{sec:topology}), their lengths are
\begin{equation}\label{eq:interval-length}
 \ell(\mathbf{k})
 :=U(\mathbf{k})-\zeta^\star(\mathbf{k})
 =\sum_{n_1\ge\cdots\ge n_r\ge2}
 \frac{n_r-1}{n_1^{k_1}\cdots n_r^{k_r}},
\end{equation}
with
\begin{equation}\label{eq:infinite-interval}
 \ell(\mathbf{k})=+\infty
 \quad\Leftrightarrow\quad
 \mathbf{k}=(2,\ones{r-1}).
\end{equation}
We shall frequently use the normalized interval length
\begin{equation}\label{eq:Lnorm}
 \Lnorm(\mathbf{k}):=2^{\wt(\mathbf{k})}\ell(\mathbf{k}).
\end{equation}
By \eqref{eq:interval-length},
\begin{equation}\label{eq:Lnorm-series}
 \Lnorm(\mathbf{k})
 =\sum_{n_1\ge\cdots\ge n_r\ge2}
 (n_r-1)\prod_{i=1}^{r}\left(\frac{2}{n_i}\right)^{k_i}.
\end{equation}
Thus $\Lnorm(\mathbf{k})$ is coordinatewise nonincreasing in the entries of
$\mathbf{k}$.

\section{Topological structure and evaluations at multiple zeta-star values}\label{sec:topology}

\subsection{Upper semicontinuity and a residual zero set}

\begin{proposition}\label{prop:regularity}
For every fixed $r$, the map $\alpha\mapsto\Delta_r(\alpha)$ is upper
semicontinuous.  The map $\alpha\mapsto\mathcal{N}(\alpha)$ is Borel
measurable as an extended real-valued function.
\end{proposition}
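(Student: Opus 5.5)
The plan is to write $\Delta_r$ as an infimum of continuous functions. For each fixed admissible index $\mathbf{k}\in\Sset_r$, the map
\[
 f_{\mathbf{k}}(\alpha):=2^{\wt(\mathbf{k})}\bigl|\alpha-\zeta^\star(\mathbf{k})\bigr|
\]
is continuous on $(1,+\infty)$; it is in fact Lipschitz with constant $2^{\wt(\mathbf{k})}$. By \eqref{eq:Delta-definition}, $\Delta_r=\inf_{\mathbf{k}\in\Sset_r}f_{\mathbf{k}}$.

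An infimum of any family of continuous (hence upper semicontinuous) real functions is upper semicontinuous. Concretely, for $c\in\R$,
\[
 \{\alpha:\Delta_r(\alpha)<c\}=\bigcup_{\mathbf{k}\in\Sset_r}\{\alpha:f_{\mathbf{k}}(\alpha)<c\}.
\]
This is a union of open sets, so it is open. That set identity is exactly upper semicontinuity.

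One check is needed: $\Delta_r$ is finite-valued. This holds because $\Sset_r$ is nonempty, so $\Delta_r(\alpha)\le f_{\mathbf{k}}(\alpha)<\infty$ for any fixed $\mathbf{k}$, and $\Delta_r\ge0$. Countability of $\Sset_r$ is not needed here, although it is available.

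For measurability, each upper semicontinuous $\Delta_r$ is Borel, since its strict sublevel sets are open. Then
\[
 \mathcal{N}=\liminf_{r\to\infty}\Delta_r=\sup_{m\ge1}\inf_{r\ge m}\Delta_r .
\]
Countable infima and suprema of Borel extended real-valued functions are Borel. Explicitly,
\[
 \{\inf_{r\ge m}\Delta_r<c\}=\bigcup_{r\ge m}\{\Delta_r<c\}
\]
is open, so each $g_m:=\inf_{r\ge m}\Delta_r$ is itself upper semicontinuous. It follows that
\[
 \{\mathcal{N}>c\}=\bigcup_m\{g_m>c\}
\]
is a countable union of closed sets, an $F_\sigma$ set, for every $c$. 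Hence $\mathcal{N}$ is Borel as a map into $[0,+\infty]$.

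There is no real obstacle here. The only points requiring care are bookkeeping ones: $\mathcal{N}$ may take the value $+\infty$, which is handled by working in $[0,+\infty]$ with sets of the form $\{\mathcal{N}>c\}$; and the argument must not rely on $\Delta_r$ being continuous, since it is generally not lower semicontinuous. This can fail, for instance, at multiple zeta-star points of other depths.
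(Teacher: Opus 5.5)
Your proof is correct and is essentially the paper's argument: $\{\Delta_r<c\}$ is the union of the open sets $\{f_{\mathbf{k}}<c\}$, and $\mathcal{N}=\sup_{m}\inf_{r\ge m}\Delta_r$ is Borel as a countable supremum of countable infima. One small slip: for an upper semicontinuous $g_m$ the closed set is $\{g_m\ge c\}$, not $\{g_m>c\}$; however $\{g_m>c\}=\bigcup_{n}\{g_m\ge c+1/n\}$ is $F_\sigma$, so $\{\mathcal{N}>c\}$ is still $F_\sigma$ and your conclusion stands.
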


\begin{proof}
For each $\mathbf{k}\in\Sset_r$, the map
\[
 \alpha\longmapsto
 2^{\wt(\mathbf{k})}\bigl|\alpha-\zeta^\star(\mathbf{k})\bigr|
\]
is continuous.  Hence, for every $c\in\R$,
\[
 \{\alpha:\Delta_r(\alpha)<c\}
 =\bigcup_{\mathbf{k}\in\Sset_r}
 \left\{\alpha:
 2^{\wt(\mathbf{k})}\bigl|\alpha-\zeta^\star(\mathbf{k})\bigr|<c
 \right\}
\]
is open.  This is equivalent to upper semicontinuity.  Finally,
\[
 \mathcal{N}(\alpha)
 =\sup_{N\ge1}\inf_{r\ge N}\Delta_r(\alpha),
\]
so $\alpha\mapsto\mathcal{N}(\alpha)$ is obtained from Borel functions by
countable infima and suprema.
\end{proof}

\begin{theorem}\label{thm:residual-zero}
The set
\[
 \mathcal{Z}_0:=\{\alpha>1:\mathcal{N}(\alpha)=0\}
\]
is a dense $G_\delta$ subset of $(1,+\infty)$.  Its intersection with every
nonempty open interval is uncountable.
\end{theorem}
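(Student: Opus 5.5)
We argue by the Baire category theorem on the open interval $(1,+\infty)$, which is a Baire space. For integers $m,N\ge1$ set
\[
 G_{m,N}:=\bigcup_{r\ge N}\{\alpha>1:\Delta_r(\alpha)<1/m\}.
\]
By \cref{prop:regularity}, each set $\{\Delta_r<1/m\}$ is open, so every $G_{m,N}$ is open. Directly from the definition \eqref{eq:N-definition},
\[
 \mathcal{N}(\alpha)=0
 \iff \text{for every } m,N \text{ there is } r\ge N \text{ with } \Delta_r(\alpha)<1/m .
\]
Hence $\mathcal{Z}_0=\bigcap_{m,N}G_{m,N}$, a countable intersection of open sets, so $\mathcal{Z}_0$ is $G_\delta$. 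It remains to show that each $G_{m,N}$ is dense. Baire's theorem then gives that $\mathcal{Z}_0$ is a dense $G_\delta$.

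For density, I use that $\mathcal{Z}^\star$ is dense in $(1,+\infty)$ (\cref{thm:Li-input}(c)). It therefore suffices to find, inside any nonempty open interval $I\subset(1,+\infty)$, a multiple zeta-star value of depth $r\ge N$, since every such $\zeta^\star(\mathbf{k})$ with $\mathbf{k}\in\Sset_r$ satisfies $\Delta_r(\zeta^\star(\mathbf{k}))=0<1/m$.

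To reach depth at least $N$, take any $\zeta^\star(\mathbf{l})\in I$ with $\mathbf{l}=(l_1,\ldots,l_s)$. If $s<N$, I append a long tail of ones and perturb. Concretely, I use that $\zeta^\star(l_1,\ldots,l_s,\ones{t})$, or a variant $\zeta^\star(l_1,\ldots,l_s+1,\ones{t})$, converges to $\zeta^\star(\mathbf{l})$ as $t\to\infty$. This is the limit used in the proof of \cref{prop:finite-coding}, namely $\lim_{t}\zeta^\star(k_1,\ldots,k_r+1,\ones{t})=\zeta^\star(k_1,\ldots,k_r)$. Those indices have depth $s+t\ge N$ for $t$ large, and their values eventually lie in $I$, because $I$ is an open neighbourhood of $\zeta^\star(\mathbf{l})$. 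So $G_{m,N}\cap I\neq\emptyset$.

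An alternative route works purely through the coding, using $\Phi$ and $\Dset_r$ via \eqref{eq:Delta-Phi-common}. Dyadic rationals with at least $N$ ones are dense in $(0,1/2)$, and $\Phi$ is a homeomorphism onto its image, so the same density follows. This version is the cleanest to write and I would use it if the tail argument needs care at $\mathbf{l}=(2,\ones{s-1})$.

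For uncountability, I use that a dense $G_\delta$ subset of a nonempty open interval $J$ is itself a dense $G_\delta$ in $J$, since $J$ is open. Suppose $\mathcal{Z}_0\cap J$ were countable, say $\{x_1,x_2,\ldots\}$. Then $\mathcal{Z}_0\cap J\setminus\{x_i\}_{i\ge1}$ would be an empty intersection of countably many dense open subsets of $J$. That contradicts Baire's theorem applied to $J$, so $\mathcal{Z}_0\cap J$ is uncountable.

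The only step with any content is the density of multiple zeta-star values of arbitrarily large depth in every interval. I expect this to be the main point to check, and the limit from \cref{prop:finite-coding} (or the dyadic density via $\Phi$) settles it.
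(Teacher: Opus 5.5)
Your proof is correct and is essentially the paper's argument: the same open sets $\bigcup_{r\ge N}\{\Delta_r<1/m\}$, density from multiple zeta-star values of depth at least $N$ approaching a point of the interval, and Baire's theorem together with the first-category argument for uncountability. For density the paper takes the prefixes $\zeta^\star(k_1,\ldots,k_r)$ of an arbitrary $\alpha=\eta(\mathbf{k})$ in the interval, and your tail $\zeta^\star(l_1,\ldots,l_s+1,\ones{t})$ is the special case $\alpha=\zeta^\star(\mathbf{l})$. One correction: drop the first tail option, since $\zeta^\star(l_1,\ldots,l_s,\ones{t})$ increases to the upper endpoint $U(\mathbf{l})>\zeta^\star(\mathbf{l})$ of the basic interval (which is $+\infty$ for $\mathbf{l}=(2,\ones{s-1})$) rather than to $\zeta^\star(\mathbf{l})$.
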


\begin{proof}
For $m,N\ge1$, put
\[
 \mathcal{U}_{m,N}
 :=\bigcup_{r\ge N}\{\alpha:\Delta_r(\alpha)<1/m\}.
\]
By \cref{prop:regularity}, each $\mathcal{U}_{m,N}$ is open.  It is dense:
if $J\subset(1,+\infty)$ is a nonempty open interval, choose
$\alpha=\eta(k_1,k_2,\ldots)\in J$.  The prefix values
\[
 \alpha_r:=\zeta^\star(k_1,\ldots,k_r)
\]
converge to $\alpha$, so some $\alpha_r\in J$ with $r\ge N$.  At that point
$\Delta_r(\alpha_r)=0$, hence $J\cap\mathcal{U}_{m,N}\ne\varnothing$.
Because all $\Delta_r(\alpha)$ are nonnegative,
\[
 \mathcal{Z}_0
 =\bigcap_{m=1}^{\infty}\bigcap_{N=1}^{\infty}\mathcal{U}_{m,N}.
\]
The Baire category theorem proves density and the $G_\delta$ property.  As a countable set is of the first category, a
residual subset of an interval cannot be countable.
\end{proof}

\subsection{Natural lower prefixes}

Fix a finite admissible index
$\mathbf{k}=(k_1,\ldots,k_s)$ and put
\[
 K=\wt(\mathbf{k}),
 \qquad
 \alpha=\zeta^\star(\mathbf{k}).
\]
The infinite sequence corresponding to $\alpha$ is
\[
 \mathbf{p}=\eta^{-1}(\alpha)=(k_1,\ldots,k_{s-1},k_s+1,\{1\}^{\infty}).
\]
For $d\ge0$, define
\begin{align}
 q_d&:=\zeta^\star(k_1,\ldots,k_{s-1},k_s+1,\ones{d}),\label{eq:qd}\\
 D_d&:=\alpha-q_d,
 \qquad
 B_d:=2^{K+d+1}D_d.\label{eq:Bd}
\end{align}
The value $q_d$ has depth $s+d$, weight $K+d+1$, and $q_d\rightarrow\alpha$.

\begin{lemma}\label{lem:natural-errors}
The sequence $B_d$ is strictly increasing, satisfies $B_{d+1}<2B_d$, and
\begin{equation}\label{eq:Bd-limit}
 \lim_{d\to+\infty}B_d=\Lnorm(\mathbf{k}),
\end{equation}
where the limit may be $+\infty$.
\end{lemma}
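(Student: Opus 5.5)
The plan is to write both $\alpha$ and $q_d$ as sums over $n_1\ge\cdots\ge n_s\ge1$ with the same outer weights $\prod_{i<s}n_i^{-k_i}$. Only the factor attached to $n_s$ then differs. For $n\ge1$ and $d\ge0$ put
\[
 f_d(n):=\sum_{n\ge m_1\ge\cdots\ge m_d\ge1}\frac{1}{m_1\cdots m_d},\qquad f_0(n)=1,
\]
which satisfies $f_{d}(n)=\sum_{m=1}^{n}f_{d-1}(m)/m$. Then
\[
 q_d=\sum_{n_1\ge\cdots\ge n_s\ge1}\frac{f_d(n_s)}{n_1^{k_1}\cdots n_{s-1}^{k_{s-1}}n_s^{k_s+1}},
\]
so
\[
 D_d=\sum_{n_1\ge\cdots\ge n_s\ge1}\frac{1-f_d(n_s)/n_s}{n_1^{k_1}\cdots n_s^{k_s}}.
\]
The $n_s=1$ term vanishes because $f_d(1)=1$. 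Setting $e_d(n):=2^{d+1}\bigl(1-f_d(n)/n\bigr)$, this gives
\[
 B_d=\sum_{n_1\ge\cdots\ge n_s\ge2}e_d(n_s)\prod_{i=1}^{s}\Bigl(\frac{2}{n_i}\Bigr)^{k_i}.
\]
Everything now reduces to one-variable statements about $e_d(n)$ for fixed $n\ge2$.

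The inequality $B_{d+1}<2B_d$ is the easy part and needs no computation. We have
\[
 2B_d-B_{d+1}=2^{K+d+2}(D_d-D_{d+1})=2^{K+d+2}(q_{d+1}-q_d),
\]
and $q_{d+1}>q_d$ by \cref{thm:Li-input}(a), since the index of $q_{d+1}$ extends that of $q_d$ and is therefore larger in $\succ$.

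For the limit, I would compute $f_d(n)$ explicitly by partial fractions. It is the coefficient of $t^d$ in $\prod_{j=1}^{n}(1-t/j)^{-1}$, so
\[
 f_d(n)=\sum_{j=1}^{n}A_j(n)\,j^{-d},\qquad A_j(n)=\prod_{i\ne j,\,1\le i\le n}\frac{i}{i-j}.
\]
Here $A_1(n)=n$ and $A_2(n)=-n(n-1)/2$. Hence
\[
 e_d(n)=(n-1)-\frac{2}{n}\sum_{j=3}^{n}A_j(n)\Bigl(\frac{2}{j}\Bigr)^{d}\longrightarrow n-1,
\]
so termwise $e_d(n)\to n-1$. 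Once every $e_d(n)\ge0$ is increasing in $d$, monotone convergence gives $B_d\to\sum(n_s-1)\prod(2/n_i)^{k_i}=\Lnorm(\mathbf{k})$ by \eqref{eq:Lnorm-series}, including the case where this equals $+\infty$. As a by-product, this argument proves \eqref{eq:interval-length}.

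The main obstacle is showing that $e_d(n)$ is strictly increasing in $d$ for every $n\ge2$, since that yields both strict monotonicity of $B_d$ and the monotone convergence step. The partial fraction coefficients $A_j(n)$ alternate in sign, so the explicit formula does not settle this directly. Instead I would argue by induction on $d$ using the recurrence. The recurrence gives
\[
 1-\frac{f_{d+1}(n)}{n}=\frac1n\sum_{m=1}^{n}\Bigl(1-f_d(m)\Bigr)\frac1m\cdot\frac{m}{1}\cdot\frac{1}{m}\,(\ldots),
\]
and more cleanly, with $g_d(m):=m-f_d(m)\ge0$,
\[
 g_{d+1}(n)=\sum_{m=2}^{n}\frac{g_d(m)}{m}+\sum_{m=2}^n\Bigl(1-\frac1m\Bigr)\cdot 0+\ldots
\]
The claim to establish is the inequality $g_{d+1}(n)>\tfrac12 g_d(n)$ for $n\ge2$, which is exactly $e_{d+1}(n)>e_d(n)$. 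I would first check it directly for $d=0$, where $g_0(n)=n-1$ and $g_1(n)=n-H_n$. I would then propagate it through the linear recurrence $g_{d+1}(n)-g_{d+1}(n-1)=1-f_{d}(n)/n+\ldots$, comparing differences in $n$. If that gets messy, the fallback is a combinatorial expansion of $D_d-\tfrac12D_{d-1}$ as a sum of manifestly positive zeta-star-type terms, obtained by splitting according to whether $m_1=n_s$.
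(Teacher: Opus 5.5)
Your reduction is the same as the paper's. From $f_{d+1}(n)=\sum_{m=1}^{n}f_d(m)/m$ you get $g_{d+1}(n)=\sum_{m=2}^{n}g_d(m)/m$ with $g_0(n)=n-1$, so your $g_d$ is exactly the paper's $R_d$ and your $e_d$ is its $h_d$. Your proof of $B_{d+1}<2B_d$ via $q_{d+1}>q_d$ is also the paper's. The partial-fraction formula is a legitimate, more explicit alternative to the paper's fixed-point argument for the pointwise limit $e_d(n)\to n-1$.

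\textbf{The gap.} You never prove the step you call the main obstacle, namely that $e_d(n)$ is nondecreasing in $d$. Without it you have neither strict increase of $B_d$ nor the monotone-convergence step. You also have not shown $e_d\ge 0$: you assert $g_d\ge0$ without proof. The displayed recurrences in that paragraph are garbled, and neither ``comparing differences in $n$'' nor the combinatorial fallback is carried out. Moreover, the inequality you set out to prove, $g_{d+1}(n)>\tfrac12 g_d(n)$ for all $n\ge2$, is false at $n=2$. There $f_d(2)=2-2^{-d}$, so $g_{d+1}(2)=\tfrac12 g_d(2)$ and $e_d(2)=1$ for every $d$.

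\textbf{The fix.} The missing idea is a one-line positivity argument, and it is exactly what the paper does. Your recurrence is equivalent to
\[
 e_{d+1}(n)=\frac{2}{n}\sum_{m=2}^{n}e_d(m)=:(Te_d)(n),
\]
and $T$ is linear with nonnegative coefficients. Hence $e_{d+1}-e_d=T^{d}(e_1-e_0)$. The base case is $e_1(n)-e_0(n)=\frac{2}{n}\bigl(n+1-2H_n\bigr)$. This vanishes at $n=2$ and is strictly positive for $n\ge3$, because $\sum_{m=2}^{n}1/m<(n-1)/2$ there. It follows that:
\begin{itemize}
\item $e_d(n)$ is nondecreasing in $d$, so $e_d\ge e_0\ge0$;
\item $e_d(n)$ is strictly increasing in $d$ for $n\ge3$, since in $T$ the summand $m=n$ is then strictly positive;
\item $e_d(2)$ is constant in $d$.
\end{itemize}
Strict increase of $B_d$ must therefore come from the terms with $n_s\ge3$, which are present in the sum, and not from every $n_s\ge2$ as you claimed. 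With monotonicity and nonnegativity in hand, your monotone-convergence step and your limit computation go through.
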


\begin{proof}
By using the simple observation 
\[
\frac{1}{n}\sum_{n\geq m\geq1}1=1
\]
repeatedly, 
we have 
\begin{equation}\label{eq:Dd-ser}
\begin{split}
 &\;\;\;\;D_d\\
   &= \sum_{n_1\geq \cdots \geq n_s\geq 1}    \frac{n_s}{ n_1^{k_1}\cdots n_{s-1}^{k_{s-1}}
  n_s^{k_s+1} }   -       \sum_{n_1\ge\cdots\ge n_{s+d}\ge 1}
 \frac{1}{n_1^{k_1}\cdots n_{s-1}^{k_{s-1}}
  n_s^{k_s+1}n_{s+1}\cdots n_{s+d}}        \\
    &= \sum_{n_1\geq \cdots \geq n_{s+1}\geq 1}    \frac{n_{s+1}}{ n_1^{k_1}\cdots n_{s-1}^{k_{s-1}}
  n_s^{k_s+1} n_{s+1}}   -       \sum_{n_1\ge\cdots\ge n_{s+d}\ge 1}
 \frac{1}{n_1^{k_1}\cdots n_{s-1}^{k_{s-1}}
  n_s^{k_s+1}n_{s+1}\cdots n_{s+d}}        \\   
  &\qquad \cdots   \qquad   \\
  &= \sum_{n_1\ge\cdots\ge n_{s+d}\ge1}
 \frac{n_{s+d}-1}{n_1^{k_1}\cdots n_{s-1}^{k_{s-1}}
  n_s^{k_s+1}n_{s+1}\cdots n_{s+d}}\\  
   &= \sum_{n_1\ge\cdots\ge n_{s+d}\ge2}
 \frac{n_{s+d}-1}{n_1^{k_1}\cdots n_{s-1}^{k_{s-1}}
  n_s^{k_s+1}n_{s+1}\cdots n_{s+d}}\\
  \end{split}
\end{equation}
For $N\ge2$, define
\[
 R_0(N)=N-1,
 \qquad
 R_{d+1}(N)=\sum_{m=2}^{N}\frac{R_d(m)}{m},
 \qquad
 h_d(N)=\frac{2^{d+1}R_d(N)}{N}.
\]
Then
\begin{equation}\label{eq:h-recurrence-consolidated}
 h_{d+1}(N)=\frac{2}{N}\sum_{m=2}^{N}h_d(m).
\end{equation}
The recurrence shows inductively that
\[
 h_d(N)\uparrow N-1.
\]
To justify both assertions, let
\[
 (Tf)(N):=\frac{2}{N}\sum_{m=2}^{N}f(m),
\]
so that $h_{d+1}=Th_d$.  Since
\[
 h_1(N)=\frac{4}{N}\sum_{m=2}^{N}\left(1-\frac1m\right)
 \ge \frac{2(N-1)}{N}=h_0(N),
\]
where we used $\sum_{m=2}^{N}m^{-1}\le(N-1)/2$. Since $T$ preserves
pointwise order, the sequence 
\[
h_d(N), \quad d\geq 0
\]
is nondecreasing for $N\geq 2$.  Because $h_0(N)\le N-1$ and the function $N\mapsto N-1$ is a fixed
point of  the operator $T$, one has 
\[
h_d(N)\leq N-1
\]
as the operator $T$ preserves pointwise order.
Let \[L_N=\lim_{d\rightarrow +\infty} h_d(N).\] Then $L_2=1$ and, for $N\ge3$,
\[
 L_N=\frac{2}{N}\sum_{m=2}^{N}L_m.
\]
Induction on $N$ gives $L_N=N-1$.  The inequality $h_1(N)>h_0(N)$ for
$N\ge3$, followed again by order preservation, also gives strict increase at
all $N\ge3$.

Grouping \eqref{eq:Dd-ser} by $n_s=N$ gives
\[
 B_d
 =2^K\sum_{n_1\ge\cdots\ge n_s\ge2}
 \frac{h_d(n_s)}{n_1^{k_1}\cdots n_s^{k_s}}.
\]
Monotone convergence and \eqref{eq:interval-length} imply
\eqref{eq:Bd-limit}.  Strict increase follows because the outer sum contains
terms with $n_s\ge3$.  Finally, $D_{d+1}<D_d$ gives
$B_{d+1}<2B_d$.
\end{proof}

\section{Metric approximation and almost-everywhere vanishing}
\label{sec:metric}

Throughout this section, $m$ denotes Lebesgue measure.  Because
$(1,+\infty)$ has infinite measure, ``full measure'' means that the complement
inside $(1,+\infty)$ has measure zero.

\subsection{Tail radii and the convergence condition}

For $g:\NN\to[1,+\infty)$ and an admissible index
$\mathbf{k}=(k_1,\ldots,k_r)$, define \begin{equation}\label{eq:tail-radius}
 R_g(\mathbf{k})
 :=\sum_{n_1\ge\cdots\ge n_r\ge n_{r+1}\ge2}
 \frac{1}{n_1^{k_1}\cdots n_r^{k_r}n_{r+1}^{g(r)}}.
\end{equation}
Let $\mathcal{F}_g$ be the set of $\alpha>1$ for which
\[
 \bigl|\alpha-\zeta^\star(\mathbf{k})\bigr|<R_g(\mathbf{k})
\]
holds for infinitely many indices whose depths tend to infinity.  The
zero--one law from \cite{LiDA} states that
\begin{equation}\label{eq:tail-zero-one}
 \sum_{r=1}^{\infty}2^{-g(r)}<+\infty
 \Longrightarrow m(\mathcal{F}_g)=0,
 \qquad
 \sum_{r=1}^{\infty}2^{-g(r)}=+\infty
 \Longrightarrow m((1,+\infty)\setminus\mathcal{F}_g)=0.
\end{equation}
The direct-radius set $\mathcal{B}_g$ was defined in \eqref{eq:Bg-intro}.
The single term $n_1=\cdots=n_{r+1}=2$ in \eqref{eq:tail-radius} equals
$2^{-\wt(\mathbf{k})-g(r)}$, and all remaining terms are positive.  Hence
\begin{equation}\label{eq:B-subset-F}
 \mathcal{B}_g\subseteq\mathcal{F}_g.
\end{equation}

\begin{theorem}\label{thm:metric-conv}
If $\sum_{r\ge1}2^{-g(r)}<+\infty$, then $m(\mathcal{B}_g)=0$.
\end{theorem}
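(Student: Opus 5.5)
This is immediate from the inclusion \eqref{eq:B-subset-F} and the convergence half of the zero--one law \eqref{eq:tail-zero-one}. The plan is to check that the two sets match exactly as defined and then quote the law.

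First I verify the inclusion $\mathcal{B}_g\subseteq\mathcal{F}_g$. Suppose $\alpha\in\mathcal{B}_g$. Then for infinitely many depths $r$ we have $\Delta_r(\alpha)<2^{-g(r)}$. Since $\Delta_r(\alpha)$ is an infimum over $\Sset_r$, each such $r$ gives an index $\mathbf{k}\in\Sset_r$ with
\[
 2^{\wt(\mathbf{k})}\bigl|\alpha-\zeta^\star(\mathbf{k})\bigr|<2^{-g(r)}.
\]
Equivalently,
\[
 \bigl|\alpha-\zeta^\star(\mathbf{k})\bigr|<2^{-\wt(\mathbf{k})-g(r)}.
\]
The strict inequality means the infimum does not need to be attained. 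The quantity $2^{-\wt(\mathbf{k})-g(r)}$ is exactly the term $n_1=\cdots=n_{r+1}=2$ of the series \eqref{eq:tail-radius}. All terms of that series are positive, so this term is at most $R_g(\mathbf{k})$.

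The indices produced this way have depths $r$ running through an infinite set, so their depths tend to infinity. This is precisely the membership condition for $\mathcal{F}_g$, so $\alpha\in\mathcal{F}_g$.

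Next, under the hypothesis $\sum_{r\ge1}2^{-g(r)}<+\infty$, the first implication in \eqref{eq:tail-zero-one} gives $m(\mathcal{F}_g)=0$. By monotonicity of Lebesgue measure, $m(\mathcal{B}_g)\le m(\mathcal{F}_g)=0$.

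Measurability of $\mathcal{B}_g$ is not actually needed, since Lebesgue measure is complete. In any case, $\mathcal{B}_g=\limsup_r\{\Delta_r<2^{-g(r)}\}$ is Borel by \cref{prop:regularity}: each of these sets is open.

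The only real point is that the quoted zero--one law from \cite{LiDA} uses exactly the radius $R_g(\mathbf{k})$ with exponent $g(r)$ at depth $r$. Beyond that, there is no obstacle.
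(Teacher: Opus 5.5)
Your proposal is correct and follows the paper's own proof: the paper also obtains $m(\mathcal{B}_g)=0$ by combining the inclusion $\mathcal{B}_g\subseteq\mathcal{F}_g$ in \eqref{eq:B-subset-F} (justified, as you do, by the $n_1=\cdots=n_{r+1}=2$ term of $R_g(\mathbf{k})$) with the convergence half of the zero--one law \eqref{eq:tail-zero-one}. Your detailed check of the strict-inequality step and your measurability remark are correct, though the paper does not need them.
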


\begin{proof}
Combine \eqref{eq:B-subset-F} with the convergence implication in
\eqref{eq:tail-zero-one}.
\end{proof}

\begin{corollary}\label{cor:eventual-lower}
For every $\varepsilon>0$, for Lebesgue almost every $\alpha>1$ one has
\[
 \Delta_r(\alpha)\ge r^{-(1+\varepsilon)}
\]
for all sufficiently large $r$.
\end{corollary}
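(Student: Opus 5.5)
Apply \cref{thm:metric-conv} with the specific choice
\[
 g(r):=(1+\varepsilon)\log_2 r \qquad (r\ge2),
\]
and set $g(1):=1$ so that $g$ maps $\NN$ into $[1,+\infty)$. For $r\ge2$ we have $2^{-g(r)}=r^{-(1+\varepsilon)}$, and this term is summable over $r$. \Cref{thm:metric-conv} therefore gives $m(\mathcal{B}_g)=0$.

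For $r\ge 2$, the event $\Delta_r(\alpha)<2^{-g(r)}$ is exactly the event $\Delta_r(\alpha)<r^{-(1+\varepsilon)}$. For $\alpha\notin\mathcal{B}_g$, this inequality holds for only finitely many $r$. Hence $\Delta_r(\alpha)\ge r^{-(1+\varepsilon)}$ for all sufficiently large $r$. This holds for every $\alpha$ in $(1,+\infty)\setminus\mathcal{B}_g$, and this set has full measure.

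There is essentially no obstacle. The only point to check is that $g$ is admissible, that is, $g\ge1$. This holds once $r\ge 2^{1/(1+\varepsilon)}$. For the finitely many smaller $r$ we may redefine $g(r)=1$ without affecting the conclusion, since only large $r$ matter.

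Note that the statement fixes $\varepsilon$ first and then excludes a null set depending on it. If one wanted a single null set for all $\varepsilon$, one would intersect over the countably many values $\varepsilon=1/n$. This is not required here.
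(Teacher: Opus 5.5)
Your proof is correct and is the paper's argument. The paper applies \cref{thm:metric-conv} with $g(r)=\max\{1,(1+\varepsilon)\log_2 r\}$, which is exactly your function: for $r\ge2$ one already has $(1+\varepsilon)\log_2 r\ge1+\varepsilon>1$, so no further redefinition is needed. It then reads off the eventual bound $\Delta_r(\alpha)\ge r^{-(1+\varepsilon)}$ for every $\alpha\notin\mathcal{B}_g$, just as you do.
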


\begin{proof}
Take $g(r)=\max\{1,(1+\varepsilon)\log_2r\}$.  Then
$\sum_r2^{-g(r)}<+\infty$, and \cref{thm:metric-conv} applies.
\end{proof}

\subsection{Large next digits and normalized cylinder lengths}

For $\alpha>1$, write
\[
 \eta^{-1}(\alpha)=(k_1(\alpha),k_2(\alpha),\ldots).
\]
We use the following recurrence result (Lemma $4.6$ in \cite{LiDA}).

\begin{lemma}\label{lem:large-digit}
Let $H:\NN\to[1,+\infty)$.  If
\[
 \sum_{r=1}^{\infty}2^{-H(r)}=+\infty,
\]
then, for Lebesgue almost every $\alpha>1$,
\begin{equation}\label{eq:large-digit}
 k_{r+1}(\alpha)\ge\lfloor H(r)\rfloor+2
\end{equation}
for infinitely many $r$.
\end{lemma}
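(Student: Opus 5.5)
The plan is to transfer the problem to the binary coordinate, where the digits $k_{r+1}(\alpha)$ become independent geometric random variables, and then apply the second Borel--Cantelli lemma.

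\emph{Step 1: the law of the digits.} Put $y=\tau(\alpha)=\beta(\eta^{-1}(\alpha))\in(0,\tfrac12)$, so that $\alpha=\Phi(y)$ by \eqref{eq:Phi-eta}. By \eqref{eq:01}, the binary digits of $y$ equal $1$ exactly at the positions $K_1<K_2<\cdots$. Hence
\[
 k_1=K_1,\qquad k_{r+1}=K_{r+1}-K_r ,
\]
so $k_{r+1}$ is the gap between the $r$-th and $(r+1)$-st digit $1$. The dyadic rationals form a countable set, so we may discard them.

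Let $y$ be uniform on $(0,\tfrac12)$. Then $b_1=0$ and $b_2,b_3,\ldots$ are i.i.d.\ fair bits. By the strong Markov property of i.i.d.\ bits at the stopping times $K_r$, the gaps $k_2,k_3,\ldots$ are independent and identically distributed, with
\[
 \Pr(k_{r+1}=m)=2^{-m}\quad(m\ge1),
 \qquad
 \Pr(k_{r+1}\ge m)=2^{-(m-1)}.
\]
The only restriction is $k_1\ge2$, which does not affect the later gaps.

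\emph{Step 2: Borel--Cantelli.} Consider the independent events
\[
 A_r=\{k_{r+1}\ge\lfloor H(r)\rfloor+2\}.
\]
Each has probability $2^{-\lfloor H(r)\rfloor-1}\ge\tfrac12\,2^{-H(r)}$. Since $\sum_r 2^{-H(r)}=+\infty$ by hypothesis, we get $\sum_r\Pr(A_r)=+\infty$. The second Borel--Cantelli lemma then shows that $A_r$ occurs for infinitely many $r$, for Lebesgue-a.e.\ $y\in(0,\tfrac12)$.

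\emph{Step 3: transfer null sets back through $\Phi$.} Let $E\subset(0,\tfrac12)$ be the exceptional null set from Step 2. The exceptional set for $\alpha$ is $\Phi(E)$, so we must show that $\Phi$ maps Lebesgue-null sets to null sets. Since $(1,+\infty)=\bigcup_M(1,M]$ and $\Phi^{-1}((1,M])=(0,y_M]$ with $y_M<\tfrac12$, it suffices to show that $\Phi$ is Lipschitz on each interval $[0,y_M]$.

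This is the step I expect to be the main obstacle. The derivative formula in \cref{prop:der} (together with \eqref{eq:fi} for one-sided derivatives) expresses $\Phi'$ as a positive series, but bounding it uniformly on $[0,y_M]$ is not immediate.

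The first thing I would try is a difference-quotient argument on dyadic cylinders. For a finite prefix $\mathbf{k}$ of depth $r$, the $\Phi$-image of the cylinder of $\mathbf{k}$ is the basic interval $Z_{\mathbf{k}}$ (up to an endpoint), and the cylinder has length $2^{-\wt(\mathbf{k})}$. So the ratio of lengths is exactly the normalized length $\Lnorm(\mathbf{k})$ from \eqref{eq:Lnorm-series}. Two facts from \eqref{eq:Lnorm-series} would then be used:
\begin{enumerate}[label=(\alph*)]
\item $\Lnorm$ is coordinatewise nonincreasing in the entries of $\mathbf{k}$;
\item on $[0,y_M]$ one is bounded away from the exceptional sequence $(2,1,1,\ldots)$, so some entry $k_j\ge2$ with $j\ge2$ (or $k_1\ge3$) is forced in a fixed bounded prefix.
\end{enumerate}
From these, $\Lnorm(\mathbf{k})$ should be bounded uniformly over all cylinders meeting $[0,y_M]$. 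Since every interval is covered efficiently by such cylinders, this gives the Lipschitz bound for $\Phi$ on $[0,y_M]$. The Lusin (N) property follows, and the statement is transported from $y$ to $\alpha$.

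If this uniform bound proves awkward, the fallback is to invoke the differentiability and absolute continuity results of Hirose--Murahara--Onozuka for $Z^\star$ on compact subintervals of $(0,1)$.
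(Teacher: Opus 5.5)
Your Steps 1 and 2 are correct. Under the uniform law on $(0,\tfrac12)$ the gaps $k_{r+1}=K_{r+1}-K_r$ are i.i.d.\ with $\Pr(k_{r+1}\ge m)=2^{1-m}$, and the second Borel--Cantelli lemma gives the statement for almost every $y$. The paper does not reprove this lemma; it imports it from the author's earlier work, so your argument has to stand on its own. Reducing Step~3 to the Lusin (N) property of $\Phi$ is also the right move.

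The gap is the claim you use to get Lusin (N): $\Phi$ is \emph{not} Lipschitz on $[0,y_M]$ once $y_M\ge\tfrac14$. This cannot be avoided, since $\Phi(\tfrac14)=\zeta^\star(2)=\zeta(2)$, so $y_M>\tfrac14$ for every $M>\zeta(2)$. Indeed $\beta^f(3,\ones{d})=\tfrac14-2^{-(d+3)}$, so \cref{lem:natural-errors} with $\mathbf{k}=(2)$ gives
\[
 \frac{\Phi(\tfrac14)-\Phi\bigl(\tfrac14-2^{-(d+3)}\bigr)}{2^{-(d+3)}}
 =2^{d+3}\bigl(\zeta(2)-\zeta^\star(3,\ones{d})\bigr)=B_d
 \longrightarrow\Lnorm(2)=+\infty .
\]
In cylinder language: the cylinders of $(3,\ones{d})$ lie in $(0,\tfrac14]$, yet \eqref{eq:q1-cylinder} and $h_d(N)\uparrow N-1$ give $\Lnorm(3,\ones{d})=4\sum_{N\ge2}h_d(N)/N^2\to+\infty$. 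Your fact (b) only forces $k_1\ge3$ here. That yields the linear bound $Cr$ of \cref{lem:linear-cylinder}, not a uniform one. The same infinite left derivative occurs at every point $\beta^f(2,\ones{s-1})=\tfrac12-2^{-s-1}$.

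The conclusion survives, because Lusin (N) only needs Lipschitz bounds locally, off a countable set. Suppose $P_t=p_1+\cdots+p_t\ge t+3$. For every extension $\mathbf{c}$ of $(p_1,\ldots,p_t)$, use coordinatewise monotonicity in \eqref{eq:Lnorm-series} together with $2^dR_d(N)=Nh_d(N)/2\le N(N-1)/2$. The argument of \cref{lem:conv} then gives
\[
 \Lnorm(\mathbf{c})\le2^{P_t-1}\sum_{n_1\ge\cdots\ge n_t\ge2}
 \frac{n_t^2}{n_1^{p_1}\cdots n_t^{p_t}}<+\infty .
\]
Splitting dyadic subintervals into subcylinders then shows that $\Phi$ is Lipschitz on the cylinder of $(p_1,\ldots,p_t)$. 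A non-dyadic $y$ has infinitely many entries $\ge2$, so it lies in the interior of such a cylinder, and there are only countably many of these. Hence $\Phi$ sends null sets of non-dyadic points to null sets, and the dyadic points are countable.

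Alternatively, combine \cref{derivative} (finite $\Phi'$ at every non-dyadic point) with the standard estimate $m^*(\Phi(E))\le n\,m^*(E)$ on sets where $\Phi'$ exists and is at most $n$. Your fallback of citing an ``absolute continuity'' theorem of Hirose--Murahara--Onozuka is not available in the paper. What actually does the work is the pointwise finiteness of $\Phi'$, or the cylinder bound above.
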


The next estimate controls the normalized lengths of the natural cylinders.

\begin{lemma}\label{lem:linear-cylinder}
Let ${\bf k}=(k_1,k_2,\ldots)\in\Tset$.  There exist
$r_0({\bf k})$ and $C({\bf k})>0$ such that
\begin{equation}\label{eq:linear-cylinder}
 \Lnorm(k_1,\ldots,k_r)
 \le C({\bf k})r
 \qquad(r\ge r_0({\bf k})).
\end{equation}
More explicitly, let $\mathbf{b}=(2,\{1\}^{\infty})$ and
\[
 t=\min\{i\ge1:k_i>b_i\}.
\]
Then one may take $r_0=t$ and
\begin{equation}\label{eq:explicit-cylinder-constant}
 C({\bf k})=2^{t+1}(1+2\log2).
\end{equation}
\end{lemma}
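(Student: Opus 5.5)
The plan is to reduce \eqref{eq:linear-cylinder} to a comparison index of a fixed shape, and then to count how much the series \eqref{eq:Lnorm-series} can grow each time a trailing entry equal to $1$ is appended.

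\textbf{Step 1: reduce to a fixed comparison shape.} Since $\Lnorm$ is coordinatewise nonincreasing in the entries (remark after \eqref{eq:Lnorm-series}), and $k_i\ge b_i$ for $i<t$, $k_t\ge b_t+1$, $k_i\ge1$ for $i>t$, it suffices for $r\ge t$ to bound $\Lnorm(\mathbf{m}^{(r)})$. Here, by the definition of $t$, $k_1=2$ and $k_2=\cdots=k_{t-1}=1$ when $t\ge2$, and the comparison index is
\[
\mathbf{m}^{(r)}=(2,\{1\}^{t-2},2,\{1\}^{r-t})\quad(t\ge2),\qquad
\mathbf{m}^{(r)}=(3,\{1\}^{r-1})\quad(t=1).
\]

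\textbf{Step 2: rewrite via the $h_d$ machinery.} Group \eqref{eq:Lnorm-series} for $\mathbf{m}^{(r)}$ by the value $N=n_t$ of the summation variable at the position of the enlarged entry. The tail over $N\ge n_{t+1}\ge\cdots\ge n_r\ge2$ is precisely $2^{d}R_d(N)=\tfrac{N}{2}h_d(N)$ with $d=r-t$, using the functions $R_d,h_d$ from the proof of \cref{lem:natural-errors}. The head over $n_1\ge\cdots\ge n_{t-1}\ge N$, together with the factor $(2/N)^{k_t}$, defines a weight $w(N)$. This gives
\[
\Lnorm(\mathbf{m}^{(r)})=\sum_{N\ge2}w(N)\,\tfrac{N}{2}\,h_{r-t}(N).
\]
Using $\sum_{n\ge m}n^{-2}\le 2/m$ repeatedly (passing from the exponent-$2$ variable down the chain), I expect a bound
\[
w(N)\,\tfrac N2\le \frac{c_t}{N^{2}},\qquad c_t\approx 2^{t+1}.
\]
This is where the constant $2^{t+1}$ in \eqref{eq:explicit-cylinder-constant} should come from. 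Writing $v(N):=c_t/N^2$, it remains to show that $S_d:=\sum_{N\ge2}v(N)h_d(N)$ satisfies $S_d\le c_t(1+2\log2)(d+1)$, or a similar linear bound.

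\textbf{Step 3: show $S_d$ grows at most linearly (main obstacle).} The crude bound $h_d(N)\le N-1$ is useless here, because $\sum_N(N-1)/N^2$ diverges. So the linear growth must come from the recurrence $h_{d+1}=Th_d$. Exchanging the order of summation gives
\[
S_{d+1}=\sum_{m\ge2}h_d(m)\cdot 2\sum_{N\ge m}\frac{v(N)}{N}.
\]
Since $2\sum_{N\ge m}N^{-3}\le m^{-2}+O(m^{-3})$, the weight $v$ is almost a fixed point of the adjoint of $T$. I would therefore aim for
\[
S_{d+1}\le S_d+E_d,\qquad E_d\le \sum_{m\ge2}h_d(m)\,O(m^{-3})\le \sum_{m\ge2}(m-1)\,O(m^{-3})=O(1),
\]
uniformly in $d$; this uses only $h_d(m)\le m-1$ inside the error term, where that bound is now summable. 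Together with $S_0=\sum_N v(N)(N-1)\,2/N$, which is finite because $h_0(N)=2(N-1)/N$ is bounded, this yields $S_d\le S_0+O(d)$, hence \eqref{eq:linear-cylinder} with $r_0=t$.

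The delicate part is making the near-fixed-point inequality exact rather than asymptotic, so as to obtain the explicit constant $1+2\log2$. I would first try the sharp tail bound
\[
\sum_{N\ge m}N^{-3}\le \frac{1}{2m^2}+\frac{1}{2m^3},
\]
and the evaluation $\sum_{m\ge2}(m-1)/m^3\le \zeta(2)-\zeta(3)+\tfrac14$, or its comparison with $2\log 2$. If the per-step error does not close cleanly this way, the fallback is to replace $v$ by a slightly modified weight, such as $c/(N(N-1))$, for which the adjoint computation telescopes exactly.
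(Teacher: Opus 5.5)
Your Steps 1 and 2 match the paper: the same comparison indices, the same grouping at $N=n_t$, and the same identification of the tail with $\tfrac N2h_d(N)$. Step 3 is a genuinely different mechanism. The paper never iterates on the weighted sum. It uses the pointwise bounds $h_d(N)\le\min\{N-1,2^{d+1}\}$ and splits at $N=2^{d+1}+1$, so the linear growth appears as a harmonic sum up to $2^{d+1}+1$; that is where $1+2\log2$ comes from. You instead move $T$ onto the weight and show bounded increments, using only $h_d\le N-1$ in the error term. This closes without any sharp tail estimate. From $\sum_{N\ge m}N^{-3}\le m^{-3}+\int_m^\infty x^{-3}\,dx=\frac1{2m^2}+\frac1{m^3}$ and $\sum_{m\ge2}(m-1)m^{-3}=\zeta(2)-\zeta(3)$ (an exact identity), you get
\[
S_{d+1}\le S_d+2c\bigl(\zeta(2)-\zeta(3)\bigr),\qquad S_0=2c\bigl(\zeta(2)-\zeta(3)\bigr),
\]
so $S_d\le 2c(\zeta(2)-\zeta(3))(d+1)$. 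With any finite $c_t$ this already proves the first assertion with $r_0=t$. You do not need to reproduce $1+2\log2$: any smaller constant suffices, and your route gives one while dispensing with the bound $h_d\le2^{d+1}$. The tail bound you plan to try first, $\sum_{N\ge m}N^{-3}\le\frac1{2m^2}+\frac1{2m^3}$, is false for every $m$. At $m=2$ the left side is $\zeta(3)-1\approx0.202>0.1875$, because the next Euler--Maclaurin term is positive. Your fallback weight $c/(N(N-1))$ also does not telescope exactly. Neither is needed.

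The actual gap is in Step 2, and it blocks the explicit constant. Write $F_{t-1}(N)=\sum_{n_1\ge\cdots\ge n_{t-1}\ge N}\frac{1}{n_1^2n_2\cdots n_{t-1}}$, so that $w(N)\tfrac N2=2^{t+1}F_{t-1}(N)/N$ for $t\ge2$. Iterating $\sum_{n\ge m}n^{-2}\le2/m$ down the chain loses a factor $2$ at each step. It only gives $F_{t-1}(N)\le2^{t-1}/N$, i.e.\ $c_t=2^{2t}$ rather than $c_t\approx2^{t+1}$. Your final bound $2^{2t}\cdot2(\zeta(2)-\zeta(3))$ then exceeds $2^{t+1}(1+2\log2)$ for every $t\ge3$. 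The missing idea is a loss-free invariant. Since $\sum_{n\ge m}n^{-2}<\sum_{n\ge m}\frac1{n(n-1)}=\frac1{m-1}$ and $F_j(N)=\sum_{m\ge N}F_{j-1}(m)/m$, induction gives $F_j(N)\le\frac1{N-1}$ for all $j$; this is the paper's bound $E_t(N)\le\frac1{N(N-1)}$. Hence $w(N)\tfrac N2\le\frac{2^{t+1}}{N(N-1)}\le\frac{2^{t+2}}{N^2}$ for $t\ge2$, and $w(N)\tfrac N2=4/N^2$ for $t=1$. The factor $2^{t+2}$ is essentially forced for the weight $N^{-2}$, since $F_{t-1}(2)\to1$. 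Feeding $c=2^{t+2}$ into your Step 3 gives $\Lnorm\le2^{t+1}\cdot4(\zeta(2)-\zeta(3))(d+1)\approx2^{t+1}\cdot1.77\,(d+1)\le2^{t+1}(1+2\log2)\,r$. This completes the explicit form of the lemma along your route.
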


\begin{proof}
The integer $t$ exists because ${\bf k}\in\Tset$ excludes the
minimal sequence $(2,\{1\}^{\infty})$.  For $d\ge0$, define
\[
 \mathbf{q}_{1,d}=(3,\ones{d})
\]
and, for $t\ge2$,
\[
 \mathbf{q}_{t,d}=(2,\ones{t-2},2,\ones{d}).
\]
If $r=t+d$, then $(k_1,\ldots,k_r)$ dominates $\mathbf{q}_{t,d}$
coordinatewise.  The monotonicity following \eqref{eq:Lnorm-series} gives
\[
 \Lnorm(k_1,\ldots,k_r)\le\Lnorm(\mathbf{q}_{t,d}).
\]

Use the functions $R_d(N)$ and $h_d(N)$ from the proof of
\cref{lem:natural-errors}.  Besides $h_d(N)\le N-1$, the recurrence
\eqref{eq:h-recurrence-consolidated} gives
\begin{equation}\label{eq:h-two-bounds}
 h_d(N)\le\min\{N-1,2^{d+1}\}.
\end{equation}
Indeed, $h_0(N)<2$, and the implication
$h_d\le2^{d+1}\Rightarrow h_{d+1}\le2^{d+2}$ follows immediately from the
recurrence.

For $t=1$, grouping the interval series by the first summation variable gives
\begin{equation}\label{eq:q1-cylinder}
 \Lnorm(\mathbf{q}_{1,d})
 =4\sum_{N=2}^{\infty}\frac{h_d(N)}{N^2}.
\end{equation}
For $t\ge2$, define
\[
 E_t(N):=\frac1N
 \sum_{n_1\ge\cdots\ge n_{t-1}\ge N}
 \frac{1}{n_1^2n_2\cdots n_{t-1}}.
\]
Grouping instead by the variable in the second exponent-$2$ position gives
\begin{equation}\label{eq:qt-cylinder}
 \Lnorm(\mathbf{q}_{t,d})
 =2^{t+1}\sum_{N=2}^{\infty}E_t(N)h_d(N).
\end{equation}
We claim that
\begin{equation}\label{eq:Et-bound}
 E_t(N)\le\frac{1}{N(N-1)}.
\end{equation}
Define
\[
 F_j(N):=\sum_{n_1\ge\cdots\ge n_j\ge N}
 \frac{1}{n_1^2n_2\cdots n_j}.
\]
It is clear that 
\[
F_1(N)=\sum_{n_1\geq N}\frac{1}{n_1^2}< \sum_{n_1\geq N}\frac{1}{n_1(n_1-1)}=\frac{1}{N-1}
\]
and 
\[
 F_j(N)=\sum_{m=N}^{\infty}\frac{F_{j-1}(m)}{m}.
\]
By induction, from $F_{j-1}(m)\leq \frac{1}{m-1} $ it follows immediately that 
\[
F_j(N)\leq \frac{1}{N-1}.
\]
Since $E_t(N)=F_{t-1}(N)/N$,
\eqref{eq:Et-bound} follows.

Set $M=2^{d+1}$.  From \eqref{eq:h-two-bounds} and
\eqref{eq:q1-cylinder},
\begin{align*}
 \Lnorm(\mathbf{q}_{1,d})
 &\le4\left(
 \sum_{N=2}^{M+1}\frac1N
 +M\sum_{N=M+2}^{\infty}\frac1{N^2}\right)\\
 &\le4\bigl(\log(M+1)+1\bigr).
\end{align*}
Likewise, \eqref{eq:qt-cylinder} and \eqref{eq:Et-bound} yield, for $t\ge2$,
\begin{align*}
 \Lnorm(\mathbf{q}_{t,d})
 &\le2^{t+1}\left(
 \sum_{N=2}^{M+1}\frac1N
 +M\sum_{N=M+2}^{\infty}\frac1{N(N-1)}\right)\\
 &\le2^{t+1}\bigl(\log(M+1)+1\bigr).
\end{align*}
Because $M=2^{d+1}$,
\[
 \log(M+1)+1\le(1+2\log2)(d+1).
\]
Finally, $d+1\le t+d=r$, which proves
\eqref{eq:linear-cylinder} with the constant in
\eqref{eq:explicit-cylinder-constant}.
\end{proof}

\begin{lemma}\label{lem:prefix-estimate}
Let
\[
 \alpha=\eta(k_1,k_2,\ldots),
 \qquad
 K_r=k_1+\cdots+k_r,
 \qquad
 x_r=\zeta^\star(k_1,\ldots,k_r).
\]
There are $C(\alpha)>0$ and $r_0(\alpha)$ such that, whenever
$r\ge r_0(\alpha)$ and $k_{r+1}\ge2$,
\begin{equation}\label{eq:prefix-estimate}
 2^{K_r}|\alpha-x_r|
 \le C(\alpha)r\,2^{1-k_{r+1}}.
\end{equation}
Consequently,
\begin{equation}\label{eq:Delta-prefix}
 \Delta_r(\alpha)
 \le C(\alpha)r\,2^{1-k_{r+1}}.
\end{equation}
\end{lemma}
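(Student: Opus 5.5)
Since $\alpha$ lies in the basic interval $Z_{(k_1,\ldots,k_r)}=(x_r,U(k_1,\ldots,k_r)]$, we have $0<\alpha-x_r\le\ell(k_1,\ldots,k_r)$. The estimate $2^{K_r}(\alpha-x_r)\le\Lnorm(k_1,\ldots,k_r)\le C r$ from \cref{lem:linear-cylinder} alone is too weak, however. The plan is to use instead that $\alpha$ lies in the deeper cylinder $Z_{(k_1,\ldots,k_{r+1})}$ with $k_{r+1}\ge2$ large. Then
\[
\alpha-x_r\le U(k_1,\ldots,k_{r+1})-x_r .
\]
Because $k_{r+1}\ge2$, the last entry $\ge2$ of $(k_1,\ldots,k_{r+1})$ is the $(r+1)$-st. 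So by \eqref{eq:U} and the nondegeneracy guaranteed by $r\ge r_0$ (the index is not $(2,\ones{r})$), we get
\[
U(k_1,\ldots,k_{r+1})=\zeta^\star(k_1,\ldots,k_r,k_{r+1}-1).
\]
Thus it suffices to bound $\zeta^\star(k_1,\ldots,k_r,m)-\zeta^\star(k_1,\ldots,k_r)$ with $m=k_{r+1}-1\ge1$.

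This difference is explicit. The terms with $n_{r+1}=1$ reproduce $\zeta^\star(k_1,\ldots,k_r)$, so
\[
\zeta^\star(k_1,\ldots,k_r,m)-x_r=\sum_{n_1\ge\cdots\ge n_{r+1}\ge2}\frac{1}{n_1^{k_1}\cdots n_r^{k_r}n_{r+1}^{m}}.
\]
Multiplying by $2^{K_r}$ and using $n_{r+1}^{-m}=n_{r+1}^{-1}\,n_{r+1}^{-(m-1)}\le n_{r+1}^{-1}2^{-(m-1)}=n_{r+1}^{-1}2^{2-k_{r+1}}$ (since $n_{r+1}\ge2$), we obtain
\[
2^{K_r}|\alpha-x_r|\le 2^{2-k_{r+1}}\,2^{K_r}\sum_{n_1\ge\cdots\ge n_r\ge2}\frac{1}{n_1^{k_1}\cdots n_r^{k_r}}\sum_{n_r\ge n\ge2}\frac1n .
\]
The inner harmonic sum is at most $\sum_{n=2}^{n_r}1\le n_r-1$. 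The remaining expression is therefore at most $2^{2-k_{r+1}}\Lnorm(k_1,\ldots,k_r)$ by \eqref{eq:Lnorm-series}.

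Applying \cref{lem:linear-cylinder} to $\mathbf{k}=\eta^{-1}(\alpha)$ gives $\Lnorm(k_1,\ldots,k_r)\le C(\mathbf k)r$ for $r\ge r_0(\mathbf k)=t$. Hence
\[
2^{K_r}|\alpha-x_r|\le 2C(\mathbf k)\,r\,2^{1-k_{r+1}},
\]
so we may take $C(\alpha)=2C(\eta^{-1}(\alpha))$. The crude bound $\sum_{n\le n_r}1/n\le n_r-1$ loses only a constant, so no sharper harmonic estimate is needed. Finally, \eqref{eq:Delta-prefix} follows because $(k_1,\ldots,k_r)\in\Sset_r$ is admissible ($k_1\ge2$), so it competes in the infimum \eqref{eq:Delta-definition}.

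The main point of care is the endpoint identification. One must check that $(k_1,\ldots,k_{r+1})$ is not of the form $(2,\ones{r})$, which is automatic since $k_{r+1}\ge2$ and $r\ge1$. One must also check that $\alpha\in Z_{(k_1,\ldots,k_{r+1})}$, i.e.\ that the basic interval at depth $r+1$ containing $\alpha$ is indexed by the prefix of $\eta^{-1}(\alpha)$. The latter follows from order-compatibility (\cref{thm:Li-input}): prefixes of $\eta^{-1}(\alpha)$ give values $\le\alpha$, and the successor value $\zeta^\star(k_1,\ldots,k_r,k_{r+1}-1)$ corresponds to an index $\succ$ every extension of $(k_1,\ldots,k_{r+1})$, hence bounds $\alpha$ from above. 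The computational steps are routine.
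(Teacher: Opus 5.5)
Your proof is correct and follows the paper's argument. You bound $\alpha-x_r$ by $\zeta^\star(k_1,\ldots,k_r,k_{r+1}-1)-x_r$, write this difference as the series over $n_{r+1}\ge2$, pull out a factor $2^{-k_{r+1}}$ to reduce to $\ell(k_1,\ldots,k_r)$, and finish with \cref{lem:linear-cylinder}. The only difference is that you keep one factor $1/n_{r+1}$ and then bound the harmonic sum crudely, which costs a factor $2$ in $C(\alpha)$, whereas the paper bounds $n_{r+1}^{1-k_{r+1}}\le 2^{1-k_{r+1}}$ outright and obtains $\alpha-x_r\le 2^{1-k_{r+1}}\ell(k_1,\ldots,k_r)$ directly.
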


\begin{proof}
Put $j=k_{r+1}\ge2$.  The order structure gives
\[
 0\le\alpha-x_r
 \le\zeta^\star(k_1,\ldots,k_r,j-1)-x_r.
\]
As
\[
\begin{split}
&\;\;\;\;  \zeta^\star(k_1,\ldots,k_r,j-1)-x_r      \\
&=  \sum_{n_1\geq \cdots \geq n_r\geq n_{r+1}\geq 2  }   \frac{1}{n_1^{k_1}\cdots n_r^{k_r} n_{r+1}^{j-1}}          \\
&\leq   \sum_{n_1\geq \cdots \geq n_r\geq n_{r+1}\geq 2  }   \frac{1}{n_1^{k_1}\cdots n_r^{k_r} 2^{j-1}}          \\
&=2^{1-j} \sum_{n_1\geq \cdots \geq n_r\geq 2  }   \frac{n_r-1}{n_1^{k_1}\cdots n_r^{k_r} }     \end{split}
\]
  Hence
\[
 \alpha-x_r
 \le2^{1-j}\ell(k_1,\ldots,k_r).
\]
Multiplication by $2^{K_r}$ and \cref{lem:linear-cylinder} yield
\eqref{eq:prefix-estimate}.  Since $x_r$ is an admissible depth $r$
approximant, \eqref{eq:Delta-prefix} follows.
\end{proof}

\subsection{A divergence theorem}

The divergence conjecture proposed in \cite{LiDA} asks
whether
\[
 \sum_{r=1}^{\infty}2^{-g(r)}=+\infty
 \quad\Longrightarrow\quad
 m((1,+\infty)\setminus\mathcal{B}_g)=0.
\]
The following theorem proves a logarithmically reinforced divergence case.

\begin{theorem}\label{thm:re-div}
If
\[
 \sum_{r=4}^{\infty}\frac{2^{-g(r)}}{r\log_2r}=+\infty,
\]
then
\[
 m((1,+\infty)\setminus\mathcal{B}_g)=0.
\]
\end{theorem}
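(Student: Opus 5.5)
Combine \cref{lem:large-digit} with \cref{lem:prefix-estimate}. Given $g$ satisfying the reinforced divergence condition, I will choose
\[
 H(r):=g(r)+\log_2 r+\log_2\log_2 r + c_0
\]
for $r\ge4$ (and $H(r)=1$ for small $r$), with $c_0$ a constant to be fixed. Then $2^{-H(r)}=2^{-c_0}\,2^{-g(r)}/(r\log_2 r)$, so $\sum_r 2^{-H(r)}=+\infty$. Also $H\ge1$, as \cref{lem:large-digit} requires. That lemma then gives a full-measure set $E\subset(1,+\infty)$ such that, for every $\alpha\in E$, $k_{r+1}(\alpha)\ge\lfloor H(r)\rfloor+2\ge H(r)+1$ for infinitely many $r$. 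In particular $k_{r+1}\ge2$ along these $r$.

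For such $\alpha$ and such $r\ge r_0(\alpha)$, \eqref{eq:Delta-prefix} yields
\[
 \Delta_r(\alpha)\le C(\alpha)\,r\,2^{1-k_{r+1}}\le C(\alpha)\,r\,2^{-H(r)}
 =C(\alpha)\,2^{-c_0}\,\frac{2^{-g(r)}}{\log_2 r}.
\]
The constant $C(\alpha)$ depends on $\alpha$, so no single $c_0$ absorbs it. The factor $1/\log_2 r$ handles this, and it is the reason for the reinforcement. Once $\log_2 r> C(\alpha)2^{-c_0}$, which holds for all large $r$, we get $\Delta_r(\alpha)<2^{-g(r)}$. Since this happens for infinitely many $r$, $\alpha\in\mathcal{B}_g$. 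Hence $(1,+\infty)\setminus\mathcal{B}_g\subseteq(1,+\infty)\setminus E$ has measure zero, and we may simply take $c_0=0$.

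The main obstacle is making this constant-absorption uniform. The bound has $r$ and $\log_2 r$ factors with an $\alpha$-dependent constant, and the choice of $H$ must keep $\sum 2^{-H(r)}$ divergent while leaving a factor tending to zero. The $\log_2\log_2 r$ term in $H$ is what does this. A small technical point is the integer part in \cref{lem:large-digit}: since $\lfloor H\rfloor+2\ge H+1$, we get $2^{1-k_{r+1}}\le 2^{-H(r)}$, so the rounding costs nothing.

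For the final claim that $\mathcal{N}=0$ almost everywhere, apply the theorem to $g(r)=\max\{1,\log_2\log_2 r\}$. Then $2^{-g(r)}/(r\log_2 r)\asymp 1/(r(\log_2 r)^2)$, which converges, so this choice is not admissible. Instead take a constant $g\equiv m$: $\sum 2^{-m}/(r\log_2 r)$ diverges. So for almost every $\alpha$, $\Delta_r(\alpha)<2^{-m}$ infinitely often, for every $m$. Intersecting these full-measure sets over $m\in\NN$ gives $\mathcal{N}(\alpha)=0$ almost everywhere.
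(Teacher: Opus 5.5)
Your proposal is correct and is essentially the paper's proof. It uses the same auxiliary function $H(r)=g(r)+\log_2 r+\log_2\log_2 r$ (your constant $c_0$ is unnecessary, as you conclude yourself), the large-digit recurrence lemma to obtain $k_{r+1}(\alpha)\ge\lfloor H(r)\rfloor+2$ infinitely often, and the prefix estimate, where the leftover factor $1/\log_2 r$ absorbs the $\alpha$-dependent constant $C(\alpha)$; your closing paragraph with $g\equiv m$ likewise reproduces the paper's derivation that $\mathcal{N}(\alpha)=0$ almost everywhere.
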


\begin{proof}
For $r\ge4$, define
\[
 H(r)=g(r)+\log_2r+\log_2\log_2r,
\]
and choose the first three values of $H$ arbitrarily in $[1,+\infty)$.  Then
\[
 \sum_{r=4}^{\infty}2^{-H(r)}
 =\sum_{r=4}^{\infty}\frac{2^{-g(r)}}{r\log_2r}=+\infty.
\]
By \cref{lem:large-digit}, for almost every $\alpha$ there are infinitely many
$r$ for which
$k_{r+1}(\alpha)\ge\lfloor H(r)\rfloor+2$.  Along those depths,
\[
 2^{1-k_{r+1}(\alpha)}
 \le2^{-\lfloor H(r)\rfloor-1}
 \le2^{-H(r)}.
\]
For all sufficiently large selected depths, \cref{lem:prefix-estimate} gives
\[
 \Delta_r(\alpha)
 \le C(\alpha)r2^{-H(r)}
 =\frac{C(\alpha)}{\log_2r}\,2^{-g(r)}
 <2^{-g(r)}.
\]
Thus $\alpha\in\mathcal{B}_g$ for almost every $\alpha>1$.
\end{proof}

\begin{theorem}\label{thm:ae-zero}
One has
\[
 \mathcal{N}(\alpha)=0
 \qquad\text{for Lebesgue almost every }\alpha>1.
\]
\end{theorem}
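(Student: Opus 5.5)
The plan is to deduce \cref{thm:ae-zero} from \cref{thm:re-div} by applying it to a countable family of functions $g$. The definitions give, for every $\alpha\in\mathcal{B}_g$,
\[
 \Delta_r(\alpha)<2^{-g(r)}
\]
for infinitely many $r$. So if $g(r)\ge c$ for all $r$, then $\liminf_{r\to\infty}\Delta_r(\alpha)\le 2^{-c}$, that is, $\mathcal{N}(\alpha)\le2^{-c}$. It therefore suffices to produce, for each integer $m\ge1$, a function $g_m:\NN\to[1,+\infty)$ with $g_m\ge m$ that still satisfies the reinforced divergence condition \eqref{eq:main-reinforced}.

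The simplest choice is the constant function $g_m(r)=m$. Then
\[
 \sum_{r=4}^{\infty}\frac{2^{-m}}{r\log_2 r}=2^{-m}\sum_{r=4}^{\infty}\frac{1}{r\log_2 r}=+\infty,
\]
because $\sum 1/(r\log r)$ diverges by the integral test or Cauchy condensation. \Cref{thm:re-div} then shows that $E_m:=(1,+\infty)\setminus\mathcal{B}_{g_m}$ has Lebesgue measure zero. Let $E=\bigcup_{m\ge1}E_m$, which is still null. For every $\alpha\notin E$ we have $0\le\mathcal{N}(\alpha)\le2^{-m}$ for all $m$, hence $\mathcal{N}(\alpha)=0$.

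A slightly stronger variant uses a single slowly growing function, for example $g(r)=\max\{1,\log_2\log_2\log_2 r\}$ for large $r$. This makes $2^{-g(r)}/(r\log_2 r)$ comparable to $1/(r\log r\log\log r)$, which still diverges, and gives $\Delta_r(\alpha)\to0$ along a subsequence directly. The countable-union argument is cleaner, though, and I would present that one.

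There is no real obstacle, since all the metric work sits in \cref{thm:re-div}, which in turn rests on \cref{lem:large-digit} and \cref{lem:prefix-estimate}. The only points to check are that the constant $g_m$ takes values in $[1,+\infty)$ as required, which holds because $m\ge1$, and that the liminf bound follows from infinitely many hits. The latter holds because the $r$ in the definition of $\mathcal{B}_g$ can be taken arbitrarily large.
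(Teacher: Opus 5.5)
Your proposal is correct and follows the paper's proof: you apply \cref{thm:re-div} to the constant functions $g_m\equiv m$, use the divergence of $\sum 1/(r\log_2 r)$, and conclude from the fact that a countable intersection of the full-measure sets $\mathcal{B}_{g_m}$ still has full measure. The paper states the equality $\{\alpha>1:\mathcal{N}(\alpha)=0\}=\bigcap_m\mathcal{B}_{g_m}$, whereas you use only the inclusion $\bigcap_m\mathcal{B}_{g_m}\subseteq\{\mathcal{N}=0\}$, and that inclusion is all the argument needs.
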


\begin{proof}
For each integer $m\ge1$, take $g_m(r)\equiv m$.  Since
\[
 \sum_{r=4}^{\infty}\frac{2^{-m}}{r\log_2r}=\infty,
\]
\cref{thm:re-div} shows that every $\mathcal{B}_{g_m}$ has full
measure.  Moreover,
\[
 \{\alpha>1:\mathcal{N}(\alpha)=0\}
 =\bigcap_{m=1}^{\infty}\mathcal{B}_{g_m},
\]
because a nonnegative liminf is zero exactly when every dyadic threshold
$2^{-m}$ is crossed infinitely often.  A countable intersection of
full-measure sets has full measure.
\end{proof}

\subsection{An explicit uncountable family of zeros}

The metric theorem is complemented by a direct symbolic construction.
Lemma~4.3 of \cite{LiTopology} gives a uniform tail estimate: there is an
absolute
constant $C>0$ such that
\begin{equation}\label{eq:uniform-tail}
 \sum_{n_1\ge\cdots\ge n_r\ge n_{r+1}\ge2}
 \frac{1}{n_1^{k_1}\cdots n_r^{k_r}n_{r+1}}
 \le C\,2^{-(k_1+\cdots+k_r)}
\end{equation}
whenever $k_1,\ldots,k_r\ge2$.

\begin{proposition}\label{prop:explicit-zeros}
If
\[
 \alpha=\eta(k_1,k_2,\ldots),
 \qquad k_i\ge2\text{ for all }i,
 \qquad \sup_i k_i=+\infty,
\]
then $\mathcal{N}(\alpha)=0$.
\end{proposition}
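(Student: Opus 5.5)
The plan is to bound $\Delta_r(\alpha)$ from above by the normalized error of the natural prefix $x_r=\zeta^\star(k_1,\ldots,k_r)$, exactly as in the proof of \cref{lem:prefix-estimate}, but with the uniform tail estimate \eqref{eq:uniform-tail} in place of \cref{lem:linear-cylinder}. The point is that \eqref{eq:uniform-tail} yields a constant independent of $r$, because all entries are at least $2$. Then we pass to the depths $r$ at which the next digit $k_{r+1}$ is large.

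\textbf{Step 1.} Fix $r$ and put $j=k_{r+1}\ge2$. Since $\alpha\in Z_{(k_1,\ldots,k_{r+1})}$, the order structure of \cref{thm:Li-input} gives
\[
 0\le\alpha-x_r\le\zeta^\star(k_1,\ldots,k_r,j-1)-x_r
 =\sum_{n_1\ge\cdots\ge n_r\ge n_{r+1}\ge2}\frac{1}{n_1^{k_1}\cdots n_r^{k_r}n_{r+1}^{j-1}}.
\]
This is the same chain as in \cref{lem:prefix-estimate}, using $U(k_1,\ldots,k_{r+1})$ as the upper endpoint; the case $j-1=1$ just returns $x_r$ plus the tail.

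\textbf{Step 2.} Bound the summand by $n_{r+1}^{-(j-1)}\le n_{r+1}^{-1}2^{-(j-2)}$, valid since $n_{r+1}\ge2$ and $j\ge2$. This gives
\[
 \alpha-x_r\le2^{2-j}\sum_{n_1\ge\cdots\ge n_{r+1}\ge2}\frac{1}{n_1^{k_1}\cdots n_r^{k_r}n_{r+1}}\le C\,2^{2-j}\,2^{-K_r},
\]
by \eqref{eq:uniform-tail}, which applies because $k_1,\ldots,k_r\ge2$. Hence
\[
 \Delta_r(\alpha)\le2^{K_r}|\alpha-x_r|\le4C\,2^{-k_{r+1}},
\]
with $C$ absolute.

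\textbf{Step 3.} Since $\sup_i k_i=+\infty$, there are infinitely many $r$ with $k_{r+1}$ arbitrarily large. Along a subsequence $r_m\to\infty$ with $k_{r_m+1}\to\infty$ we get $\Delta_{r_m}(\alpha)\to0$, and therefore $\mathcal{N}(\alpha)=\liminf_r\Delta_r(\alpha)=0$.

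The only delicate point is Step 2. We must extract the decay in $j$ while keeping exactly one factor $n_{r+1}^{-1}$, so that the sum matches the shape of \eqref{eq:uniform-tail}. We must also check that the hypothesis $k_i\ge2$ for all $i$ is what makes that estimate available at every depth. It is precisely this uniformity that replaces the factor $r$ from \cref{lem:linear-cylinder}, which by itself would not force the bound to zero unless $k_{r+1}$ grew faster than $\log_2 r$.

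Finally, the family of such $\alpha$ is uncountable, since the sequences with all $k_i\ge2$ and unbounded entries form an uncountable set; this gives the explicit uncountable family of zeros promised by the subsection heading.
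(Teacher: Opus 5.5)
Your proposal is correct and follows the paper's proof essentially step for step. It uses the same bracketing $0\le\alpha-x_r\le\zeta^\star(k_1,\ldots,k_r,k_{r+1}-1)-x_r$ and the same bound $n^{-(m-1)}\le 2^{2-m}n^{-1}$ to reach the uniform tail estimate \eqref{eq:uniform-tail}, which gives $\Delta_r(\alpha)\le 4C\,2^{-k_{r+1}}$ with $C$ absolute, and then the same passage to depths along which $k_{r+1}\to+\infty$.
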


\begin{proof}
Let $K_r=k_1+\cdots+k_r$ and
$x_r=\zeta^\star(k_1,\ldots,k_r)$.  If $m=k_{r+1}\ge2$, then
\[
 0<\alpha-x_r
 <\zeta^\star(k_1,\ldots,k_r,m-1)-x_r.
\]
For $n\ge2$, $n^{-(m-1)}\le2^{2-m}n^{-1}$.  Applying
\eqref{eq:uniform-tail} gives
\[
 2^{K_r}(\alpha-x_r)\le C2^{2-k_{r+1}}.
\]
Along a subsequence for which $k_{r+1}\to+\infty$, the right-hand side tends
to zero.  Since $x_r$ is a depth $r$ approximant,
$0\le\Delta_r(\alpha)\le2^{K_r}(\alpha-x_r)$ along that subsequence, and
therefore $\mathcal{N}(\alpha)=0$.
\end{proof}

There are uncountably many unbounded sequences with all digits at least $2$,
so \cref{prop:explicit-zeros} gives an explicit uncountable subfamily of the
zero set found in \cref{thm:residual-zero,thm:ae-zero}.

\section{Evaluations of the normalized approximation function}\label{sec:integers}

\subsection{The binary liminf constant}

For $x\in\left(0,\frac{1}{2}\right)$, define
\begin{equation}\label{eq:binary-delta}
 \delta_r(x):=\inf_{y\in\Dset_r}2^{\nu(y)}|x-y|,
 \qquad
 \Lambda(x):=\liminf_{r\to+\infty}\delta_r(x).
\end{equation}
For $u\in\R$, denote by $\|u\|=\dist(u,\mathbb Z)$.

\begin{lemma}\label{lem:orbit-formula}
If $x\in\left(0,\frac{1}{2}\right)$ is non-dyadic, then
\begin{equation}\label{eq:orbit-formula}
 \Lambda(x)=\liminf_{n\to\infty}\|2^nx\|.
\end{equation}
\end{lemma}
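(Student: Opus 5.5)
The plan is to compare the two liminfs directly, using only the binary expansion $x=\sum_{j\ge1}a_j2^{-j}$. Since $x\in(0,\frac12)$ is non-dyadic, the expansion is unique, $a_1=0$, and it contains infinitely many zeros and infinitely many ones.

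\emph{Lower bound $\Lambda(x)\ge\liminf_n\|2^nx\|$.} Take $y\in\Dset_r$ and put $n=\nu(y)$. Then $2^ny\in\mathbb Z$, so
\[
 2^{\nu(y)}|x-y|=|2^nx-2^ny|\ge\|2^nx\|.
\]
The $r$ ones of $y$ lie at distinct positions $\ge2$, so $\nu(y)\ge r+1$. Taking the infimum over $y\in\Dset_r$ gives
\[
 \delta_r(x)\ge\inf_{n\ge r+1}\|2^nx\|.
\]
Letting $r\to\infty$ gives the lower bound.

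\emph{Upper bound.} Choose a subsequence $n_i\to\infty$ with $\|2^{n_i}x\|\to\liminf_n\|2^nx\|$. For each $n=n_i$, let $m$ be the integer nearest to $2^nx$ and set $y_n=m/2^n$. There are two cases.
\begin{itemize}[leftmargin=2em]
\item $y_n=t_n:=\sum_{j\le n}a_j2^{-j}$, the truncation of $x$.
\item $y_n=t_n+2^{-n}$, the rounded-up truncation.
\end{itemize}
In both cases $2^{\nu(y_n)}|x-y_n|\le 2^n|x-y_n|=\|2^nx\|$, because $\nu(y_n)\le n$.

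It remains to check that $y_n\in\Dset_{r(n)}$ for some $r(n)$, with $r(n)\to\infty$.
\begin{itemize}[leftmargin=2em]
\item \emph{Truncation case.} Here $0<t_n<\frac12$ once $n$ passes the first one of $x$, since $a_1=0$. The number $r(n)$ of ones among $a_1,\dots,a_n$ tends to infinity because $x$ has infinitely many ones.
\item \emph{Rounded-up case.} Adding $2^{-n}$ turns the final block of ones of $t_n$ into zeros and the last zero $z_n\le n$ into a one. So $r(n)=1+\#\{j<z_n:a_j=1\}$.
\item \emph{Why $r(n)\to\infty$ there.} As $x$ has infinitely many zeros, $z_n\to\infty$. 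As $x$ also has infinitely many ones, the count of ones before $z_n$ tends to infinity.
\item \emph{Why $y_n\in(0,\frac12)$ there.} We need $y_n<\frac12$: this fails only if $a_2=\dots=a_n=1$, which cannot happen for large $n$ since $x<\frac12$ forces some $a_j=0$ with $j\ge2$.
\end{itemize}

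Hence $\delta_{r(n_i)}(x)\le\|2^{n_i}x\|$ with $r(n_i)\to\infty$. This gives infinitely many indices $r$ along which $\delta_r(x)$ is at most $\|2^{n_i}x\|$, so $\Lambda(x)\le\liminf_n\|2^nx\|$.

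The only delicate point is the carry bookkeeping in the rounded-up case. One must show the number of ones of $y_n$ still tends to infinity and that $y_n$ stays in $(0,\frac12)$. Both follow from $x$ having infinitely many zeros and infinitely many ones, which is exactly where non-dyadicity is used.
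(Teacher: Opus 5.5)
Your proof is correct and follows essentially the same route as the paper. The lower bound via $\nu(y)\ge r+1$ is the same, and so is the upper bound via rounding $2^{n}x$ to the nearest integer. The one difference is how you show that the number of ones $r(n)\to\infty$: the paper argues by compactness (a bounded number of ones would force a dyadic subsequential limit, contradicting $y_j\to x$), whereas you do the carry bookkeeping explicitly, which also verifies $y_n\in(0,\tfrac12)$, a point the paper leaves implicit.
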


\begin{proof}
Let $\beta_0=\liminf_{n\to+\infty}\|2^nx\|$.  If
$y=m/2^{\nu(y)}\in\Dset_r$ is in reduced form, then $m$ is odd and
$\nu(y)\ge r+1$, so
\[
 2^{\nu(y)}|x-y|
 =|2^{\nu(y)}x-m|
 \ge\|2^{\nu(y)}x\|
 \ge\inf_{n\ge r+1}\|2^nx\|.
\]
Taking the infimum and then the lower limit gives
$\Lambda(x)\ge \beta_0$.

Conversely, choose $n_j\to+\infty$ with
$\|2^{n_j}x\|\to\beta_0$, let $m_j$ be a nearest integer to $2^{n_j}x$, and
reduce $m_j/2^{n_j}$ to a terminating dyadic $y_j$.  If $r_j$ is the number
of ones in its terminating expansion, then
\[
 2^{\nu(y_j)}|x-y_j|
 \le2^{n_j}|x-y_j|
 =\|2^{n_j}x\|.
\]
Because $y_j\to x$ and $x$ is non-dyadic, the numbers $r_j$ must tend to
infinity.  Indeed, otherwise there would be an integer $R$ and a subsequence
with $r_j\le R$.  After padding with zero summands,  each element of that
subsequence can be written as
\[
 y_j=\sum_{\ell=1}^{R}2^{-n_{\ell,j}},
 \qquad n_{\ell,j}\in\NN\cup\{+\infty\},
 \qquad 2^{-\infty}:=0,
\]
with distinct finite exponents.  By successive subsequence extraction, each
$n_{\ell,j}$ is eventually constant or tends to $+\infty$.  The subsequential
limit is therefore a finite sum of negative powers of $2$, hence dyadic.  This
contradicts $y_j\to x$.  Taking lower limits along $r_j$ gives
$\Lambda(x)\le\beta_0$.
\end{proof}

\begin{corollary}\label{cor:inva}
Let $L\ge1$, let $A\in\{0,1,\ldots,2^L-1\}$, and suppose
\[
 P(x)=\frac{A+x}{2^{L}}\in\left(0,\frac{1}{2}\right).
\]
If $x$ is non-dyadic, then
\begin{equation}\label{eq:prefix-invariance}
 \Lambda(P(x))=\Lambda(x).
\end{equation}
\end{corollary}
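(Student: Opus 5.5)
The plan is to deduce the statement directly from \cref{lem:orbit-formula}, applied to both $x$ and $P(x)$. This requires first checking that $P(x)$ is non-dyadic and lies in $\left(0,\frac12\right)$. The second condition is assumed. For the first, if $P(x)=m/2^n$ for integers $m,n$, then $x=2^LP(x)-A=(2^Lm-2^nA)/2^n$ would be dyadic, contrary to hypothesis. So \cref{lem:orbit-formula} gives
\[
 \Lambda(P(x))=\liminf_{n\to\infty}\bigl\|2^nP(x)\bigr\|,
 \qquad
 \Lambda(x)=\liminf_{n\to\infty}\|2^nx\|.
\]
Note also that $x\in\left(0,\frac12\right)$ is implicitly required for $\Lambda(x)$ to be defined as in \eqref{eq:binary-delta}. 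If $x$ were allowed in $[0,1)$, one would instead use the right-hand side of \eqref{eq:orbit-formula} as the meaning of $\Lambda(x)$. I would state this reading explicitly.

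The key computation is that for $n\ge L$,
\[
 2^nP(x)=2^{n-L}A+2^{n-L}x .
\]
Here $2^{n-L}A\in\mathbb{Z}$, and $\|\cdot\|$ is invariant under integer translation. Hence $\|2^nP(x)\|=\|2^{n-L}x\|$ for all $n\ge L$. The sequence $(\|2^nP(x)\|)_{n\ge L}$ is therefore a shift by $L$ of the sequence $(\|2^mx\|)_{m\ge0}$. A lower limit is unchanged by discarding finitely many terms and by reindexing through a shift, so the two liminfs agree. This proves \eqref{eq:prefix-invariance}.

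I expect no real obstacle. The only delicate point is confirming that the hypotheses of \cref{lem:orbit-formula} hold on both sides, namely non-dyadicity and membership of $x$ in $\left(0,\frac12\right)$. The rest is the one-line shift identity.
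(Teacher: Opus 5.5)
Your proposal is correct and is essentially the paper's argument. The paper also notes that $2^nP(x)$ and $2^{n-L}x$ have the same fractional part for $n\ge L$, then applies the orbit formula of Lemma~\ref{lem:orbit-formula}; your checks that $P(x)$ is non-dyadic and that $x$ must tacitly lie in $\left(0,\frac12\right)$ are details the paper leaves implicit, and you handle them correctly.
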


\begin{proof}
For every $n\ge L$, the fractional parts of $2^nP(x)$ and $2^{n-L}x$ agree.
Apply \cref{lem:orbit-formula}.
\end{proof}

\begin{proposition}\label{prop:differentiable}
Let $x\in\left(0,\frac{1}{2}\right)$ be non-dyadic.  If $\Phi$ is differentiable at $x$ and
$$0<\Phi'(x)<+\infty,$$ then
\begin{equation}\label{eq:differentiable-transfer}
 \mathcal{N}(\Phi(x))=\Phi^\prime(x)\Lambda(x).
\end{equation}
\end{proposition}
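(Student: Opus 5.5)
We prove \eqref{eq:differentiable-transfer} by comparing $\Delta_r(\Phi(x))$ with $\delta_r(x)$ through the formula \eqref{eq:Delta-phi-common}. For $y\in\Dset_r$ we have
\[
2^{\nu(y)}|\Phi(x)-\Phi(y)|=2^{\nu(y)}|x-y|\cdot Q(y),
\qquad
Q(y):=\frac{\Phi(x)-\Phi(y)}{x-y},
\]
and $Q(y)\to\Phi'(x)$ as $y\to x$. The whole proof is therefore a localization argument: near-optimal competitors must converge to $x$ as $r\to\infty$, both in the binary problem and in the zeta-star problem. Once localization holds, the factor $Q(y)$ is squeezed between $\Phi'(x)(1\pm\varepsilon)$ and the liminfs transfer.

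The first step is localization in the binary picture. Since $\Phi(x)$ is finite, $\Lambda(x)\le\liminf\|2^nx\|\le 1/2$ by \cref{lem:orbit-formula}. Fix $\rho>0$. The claim is that for large $r$, every $y\in\Dset_r$ with $|y-x|\ge\rho$ satisfies
\[
2^{\nu(y)}|x-y|\ge 2^{r+1}\rho\to\infty,
\]
which uses $\nu(y)\ge r+1$. Hence in computing $\delta_r(x)$ up to any bounded level we may restrict to $|y-x|<\rho$.

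The zeta-star side needs the analogous statement, and this is the main obstacle. We must show that for $y\in\Dset_r$ with $|y-x|\ge\rho$, the quantity $2^{\nu(y)}|\Phi(x)-\Phi(y)|$ is also large. Since $\Phi$ is strictly increasing and continuous on $[0,\tfrac12]$, with values in $[1,+\infty]$, we get
\[
|\Phi(x)-\Phi(y)|\ge c(\rho):=\min\{\Phi(x)-\Phi(x-\rho),\ \Phi(x+\rho)-\Phi(x)\}>0,
\]
where $\Phi(x+\rho)=+\infty$ is allowed if $x+\rho\ge\tfrac12$. Then $2^{\nu(y)}|\Phi(x)-\Phi(y)|\ge 2^{r+1}c(\rho)\to\infty$. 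So both infima are eventually attained, up to $\varepsilon$, on $y$ within $\rho$ of $x$. The care needed is mainly to clip $x\pm\rho$ to $[0,\tfrac12]$ and to handle $\Phi=+\infty$ at the right endpoint.

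The last step is the comparison. Given $\varepsilon>0$, choose $\rho$ by differentiability so that $|Q(y)-\Phi'(x)|\le\varepsilon\Phi'(x)$ whenever $0<|y-x|<\rho$. Here $y\ne x$ automatically, since $x$ is non-dyadic. For large $r$ the localization gives
\[
(1-\varepsilon)\Phi'(x)\,\min\{\delta_r(x),M\}\le\min\{\Delta_r(\Phi(x)),M'\}\le(1+\varepsilon)\Phi'(x)\,\delta_r(x)
\]
for suitable truncation levels $M,M'$. These truncations are harmless because $\delta_r(x)\le 1$ infinitely often; more simply, the liminf of $\delta_r(x)$ is at most $1/2$. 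Taking liminf as $r\to\infty$ and then letting $\varepsilon\to0$ yields $\mathcal{N}(\Phi(x))=\Phi'(x)\Lambda(x)$. The positivity and finiteness of $\Phi'(x)$ are used so that the multiplicative squeeze is meaningful and the limit stays finite.
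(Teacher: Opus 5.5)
Your proposal is correct and follows essentially the same route as the paper. Both use the fact that $\nu(y)\ge r+1$, together with bounded normalized error, to force near-optimal competitors to converge to $x$, and then let the difference quotient $\frac{\Phi(x)-\Phi(y)}{x-y}\to\Phi'(x)$ transfer the liminf. Where the paper extracts subsequences $y_j\to x$, you use a fixed radius $\rho$ to get a termwise sandwich for all large $r$; the truncations are unnecessary, since $\delta_r(x)\le1$ for every $r$ (truncate the binary expansion of $x$ after its $r$-th one).
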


\begin{proof}
Choose depths $r_j\to+\infty$ such that
$\delta_{r_j}(x)\to\Lambda(x)$, and choose $y_j\in\Dset_{r_j}$ with
\[
 2^{\nu(y_j)}|x-y_j|\le\delta_{r_j}(x)+\frac1j.
\]
The right-hand side is bounded and $\nu(y_j)\ge r_j+1$, so $y_j\to x$.  Hence
\[
 \frac{|\Phi(x)-\Phi(y_j)|}{|x-y_j|}\longrightarrow\Phi'(x).
\]
Using \eqref{eq:Delta-Phi-common} along this subsequence gives
\[
 \mathcal{N}(\Phi(x))\le\Phi'(x)\Lambda(x).
\]
In particular, the left-hand side is finite.

For the reverse inequality, choose $r_j\to+\infty$ with
$\Delta_{r_j}(\Phi(x))\to\mathcal{N}(\Phi(x))$, and choose
$y_j\in\Dset_{r_j}$ such that
\[
 2^{\nu(y_j)}\Big{|}\Phi(x)-\Phi(y_j)\Big{|}
 \le\Delta_{r_j}(\Phi(x))+\frac1j.
\]
These normalized quantities are bounded.  Since $\nu(y_j)\ge r_j+1$, it follows
that $\Phi(y_j)\to\Phi(x)$; continuity of the inverse of the strictly
increasing bijection $\Phi$ then gives $y_j\to x$.  Therefore
\[
 \Delta_{r_j}(\Phi(x))+\frac1j
 \ge
 \frac{|\Phi(x)-\Phi(y_j)|}{|x-y_j|}\,
 2^{\nu(y_j)}|x-y_j|
 \ge
 \frac{|\Phi(x)-\Phi(y_j)|}{|x-y_j|}\,\delta_{r_j}(x).
\]
Taking lower limits proves
$\mathcal{N}(\Phi(x))\ge\Phi'(x)\Lambda(x)$.
\end{proof}

 \begin{lemma}\label{lem:conv}
 For $r\geq 2, k_1\geq 2, k_2,\cdots,k_r\geq 1$ and 
 \[
 k_1+\cdots+k_r\geq r+3,
 \]
 we have 
 \[
 \sum_{m_1\geq \cdots \geq m_r\geq 3}\frac{m_r^2}{m_1^{k_1}\cdots m_r^{k_r}}<+\infty.
 \]
  \end{lemma}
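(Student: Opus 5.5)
We need to show that
\[
\sum_{m_1\ge\cdots\ge m_r\ge3}\frac{m_r^2}{m_1^{k_1}\cdots m_r^{k_r}}<+\infty
\]
when $k_1\ge2$ and $\sum k_i\ge r+3$. The plan is to bound the sum by an explicit one-variable series through successive summation, as in the proof of \cref{lem:linear-cylinder}. We write $j_i=k_i$ and sum from the innermost variable outward. The key remark is that, on the ordered region $m_1\ge\cdots\ge m_r$, every factor $m_i^{-k_i}$ with $i\ge2$ can be traded for powers of $m_1$. The exponent budget $k_1+\cdots+k_r\ge r+3$ then gives enough decay in $m_1$ after counting the number of lattice points below it.

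\textbf{Step 1 (reduce to the minimal weight).} Since all $m_i\ge3>1$, the summand is nonincreasing in each $k_i$. We therefore lower the exponents, keeping $k_1\ge2$ and $k_i\ge1$, until $\wt=r+3$ exactly. This is always possible: decrease any $k_i$ ($i\ge2$) above $1$, or $k_1$ above $2$, and note that $(2,\ones{r-1})$ has weight $r+1<r+3$. It then suffices to treat indices with $\sum k_i=r+3$. There are finitely many such shapes, but we handle them uniformly.

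\textbf{Step 2 (crude bound via the largest variable).} On the region, $m_r^2\le m_1^2$ and $m_i^{-k_i}\le m_i^{-1}$ for $i\ge2$. Any surplus exponent is shifted to the outermost variable: since $m_i\le m_1$ gives $m_i^{-(k_i-1)}\ge m_1^{-(k_i-1)}$, which is the wrong direction, this simple trade fails. So we instead argue with the harmonic-type bound directly. Fix $m_1=N$. We claim
\[
\sum_{N\ge m_2\ge\cdots\ge m_r\ge3}\frac{m_r^2}{m_2^{k_2}\cdots m_r^{k_r}}\ \ll\ N^{\,2+(r-1)-(k_2+\cdots+k_r)}(1+\log N)^{r-1}.
\]
This follows by induction from the innermost variable. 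For $a$ real, the elementary estimate $\sum_{3\le m\le M}m^{a}\ll M^{a+1}(1+\log M)$ (valid for $a\ge-1$, and $\ll 1$ for $a<-1$) lets us sum $m_r$, then $m_{r-1}$, and so on. Each summation raises the exponent by $1$ minus the new $k_i$. A negative exponent below $-1$ only improves the bound, so the stated bound with exponent $\max$ holds in all cases.

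\textbf{Step 3 (conclude).} Multiplying by $N^{-k_1}$ gives terms $\ll N^{\,r+1-\wt}(\log N)^{r-1}=N^{-2}(\log N)^{r-1}$ when $\wt=r+3$, which is summable over $N$. The main obstacle is Step 2: keeping track of the cases where an intermediate exponent drops below $-1$. Here I would state the inductive claim as ``$\ll N^{\max(e,0)}(1+\log N)^{c}$'' with $e$ the running exponent, and check that $\max(e,0)\le r+1-\wt+k_1$ still gives total exponent at most $-2+\text{(nonnegative slack)}$. The slack in the clipped case is harmless because clipping happens only when the partial weight is already large, and then the remaining factor $N^{-k_1}$ with $k_1\ge2$ suffices. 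If that bookkeeping gets messy, the fallback is to use $m_r^2\le m_r\,m_{r-1}$-type redistributions so that every inner exponent is at least $1$, which reduces the problem to the case already covered by $F_j(N)\le 1/(N-1)$ from the proof of \cref{lem:linear-cylinder}.
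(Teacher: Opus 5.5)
Your proof is correct, but it takes a different and heavier route than the paper. You reduce to weight exactly $\wt(\mathbf{k})=r+3$ and then sum out $m_r,\ldots,m_2$ while tracking powers and logarithms. The clipping worry in your Step 3 never actually arises. Because $k_i\ge1$ for $i\ge2$, the running exponent $2+\sum_{l=i}^{r}(1-k_l)$ is nonincreasing as you move outward, and at $i=2$ it equals $k_1-2\ge0$. So every summation has the form $\sum_{3\le m\le M}m^{a}(1+\log m)^{c}$ with $a\ge-1$, no clipping occurs, the fallback is unnecessary, and the contribution of $m_1=N$ is $\ll N^{-2}(1+\log N)^{r-1}$.

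The paper instead uses a single pointwise comparison. The exponent trade you abandoned because it goes the wrong way toward $m_1$ works in the right direction, toward the smallest variable $m_r$. On the ordered region $m_i\ge m_r$, and the exponents satisfy $k_1-2\ge0$ and $k_i-1\ge0$ for $2\le i\le r-1$, so
\[
 m_1^{k_1-2}m_2^{k_2-1}\cdots m_{r-1}^{k_{r-1}-1}m_r^{k_r-3}
 \ge m_r^{k_1+\cdots+k_r-r-3}\ge1 .
\]
This says the summand is at most $1/(m_1^2m_2\cdots m_r)$, a sub-series of the convergent series for $\zeta^\star(2,\ones{r-1})$. This absorbs any surplus weight automatically and needs no reduction step, induction, or logarithmic estimates. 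Your growth-order method would adapt to other numerators, but here it only recovers what the one-line domination already gives.
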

 \begin{proof}
 As \[
 \sum_{m_1\geq \cdots \geq m_r\geq 3} \frac{1}{m_1^2m_2\cdots m_r}<+\infty,
 \]
 it suffices to show that
 \begin{equation}\label{eq:mix}
 \frac{m_r^2}{m_1^{k_1}\cdots m_r^{k_r}}\leq \frac{1}{m_1^2m_2\cdots m_r}, \quad m_1\geq m_2\geq \cdots\geq m_r\geq 3.  \end{equation}
 The formula \eqref{eq:mix} is equivalent to
 \begin{equation}\label{eq:or}
 m_1^{k_1-2}m_2^{k_2-1}\cdots m_{r-1}^{k_{r-1}-1}m_r^{k_r-3}\geq 1, \quad m_1\geq m_2\geq \cdots\geq m_r\geq 3. \end{equation}
 By \[ m_1\geq m_2\geq \cdots\geq m_r\geq 3   \]
 and \[
 k_1+\cdots+k_r\geq r+3,
 \]
 it follows that 
 \[
 m_1^{k_1-2}m_2^{k_2-1}\cdots m_{r-1}^{k_{r-1}-1}m_r^{k_r-3}\geq m_r^{k_1+\cdots+k_r-r-3       }\geq 1.
  \]
  As a result, the formulas \eqref{eq:mix} and \eqref{eq:or} are proved.
 \end{proof}

The derivative is positive and finite at every non-dyadic point.
\begin{lemma}\label{derivative}
For  \[y=\beta({\bf k})=\sum_{r=1}^{+\infty} \frac{1}{2^{k_1+\cdots+k_r}}, \quad {\bf k}\in \mathcal{T}\]
 and $y$ is not a dyadic rational number in $\left(0, \frac{1}{2}\right)$, we have 
 \[
     0<\Phi^\prime(y)<+\infty. \]
 \end{lemma}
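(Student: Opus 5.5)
We work from \eqref{eq:su3} in the proof of \cref{prop:der}:
\[
 \Phi'(y)=1+\sum_{r=1}^{+\infty}2^{K_r}\sum_{n_1\ge\cdots\ge n_r\ge3}\frac{1}{n_1^{k_1}\cdots n_r^{k_r}},
\]
which is valid at every non-dyadic $y=\beta(\mathbf{k})$, $\mathbf{k}\in\Tset$. Every summand is nonnegative and the leading term is $1$. Hence $\Phi'(y)\ge1>0$ as soon as the series converges, and the positivity claim is immediate. The whole task is therefore to show that the series is finite. We do this by comparing each term with a geometric series in $r$.

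Write $T_r:=2^{K_r}\sum_{n_1\ge\cdots\ge n_r\ge3}\prod_i n_i^{-k_i}$. Every $n_i$ satisfies $n_i\ge n_r$, so
\[
 T_r\le\sum_{N\ge3}\Bigl(\frac{2}{N}\Bigr)^{K_r}\cdot\#\{n_1\ge\cdots\ge n_{r-1}\ge N\}.
\]
This crude bound is useless because the count is infinite. We therefore keep the structure and instead bound the sum by splitting each factor as $(2/n_i)^{k_i}=(2/n_i)^{k_i-1}\cdot(2/n_i)$ for $i\ge2$.

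Since $y$ is non-dyadic, infinitely many $k_i\ge2$. Let $J(r)=\#\{2\le i\le r:k_i\ge2\}$, so $J(r)\to\infty$. Each index $i$ with $k_i\ge2$ contributes an extra factor $2/n_i\le2/3$. The plan is to establish
\[
 T_r\le C\cdot(2/3)^{J(r)}\cdot P(r),
\]
where $C$ depends on $k_1$ and $P(r)$ is the $\mathbf{b}$-type sum $\sum_{n_1\ge\cdots\ge n_r\ge3}(2/n_1)^{2}\prod_{i\ge2}(2/n_i)$. This $P(r)$ is exactly the kind of quantity controlled in the proof of \cref{lem:linear-cylinder}. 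The recursion $h_{d+1}=Th_d$ there shows that such sums restricted to $n\ge3$ satisfy $F(r)=\sum_{N\ge3}\frac{2}{N}(\cdots)$, and they grow at most linearly in $r$. More precisely, the computation giving \eqref{eq:Et-bound} yields the bound $\sum_{m\ge N}\frac{2}{m}\cdot\frac{c}{m-1}\le\frac{2c}{N-1}$ with no gain. So we must extract the decay from the $k_i\ge2$ positions.

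The cleaner route is as follows. Starting from the innermost variable, define $G_r(N)=\sum_{N\ge n_r\ge\cdots}$, iterating the operator that multiplies by $(2/N)^{k_i}$ and sums downward over $3\le m\le N$. Call this operator $S_k$, given by $(S_kf)(N)=(2/N)^k\sum_{m=3}^{N}f(m)$. We track weighted sup-norms of the form $\sup_N f(N)/N$. Because the lower summation bound is $3$, the operator $S_1$ maps $f\le cN$ to $f\le c(N+3)(N-2)/N\le c(N+1)$, which costs only additive growth. The operator $S_k$ with $k\ge2$ maps $f\le c(N+a)$ to $f\le c\cdot 4(N+a)/(2N)\cdot(\ldots)$, which gives a bounded function, i.e.\ it resets the growth. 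Tracking these bounds, the terms $T_r$ decay like $\gamma^{J(r)}$ times a polynomial in the gaps between successive indices with $k_i\ge2$. Here $\gamma<1$ is obtained from the strict inequality coming from the excluded $n=2$ terms, namely the comparison $\sum_{m=3}^N 1/m<\ldots$ used in \cref{lem:natural-errors}.

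The main obstacle is obtaining summability over $r$ when the gaps between successive positions with $k_i\ge2$ are unbounded. In that case the blocks of ones give only polynomial, not geometric, decay. Since $y$ is non-dyadic there are infinitely many such positions, but the gaps may be huge. If a uniform geometric rate fails, a fallback is to bound $\Phi'$ via the difference quotient directly. Using \cref{prop:finite-coding} and \eqref{eq:interval-length}, the cylinder of level $r$ has $\Phi$-length $\ell(k_1,\dots,k_r)$ and $\beta$-length about $2^{-K_r}$, so $\Lnorm$ is the local slope. Monotonicity of $\Phi'$-type sums in the entries gives $\Phi'(y)\le\liminf_r\Lnorm(k_1,\dots,k_r)$ up to constants. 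Comparing with $\mathbf{q}_{t,d}$ as in \cref{lem:linear-cylinder}, and using the fact that $k_i\ge2$ recurs, we refine the bound $h_d\le\min\{N-1,2^{d+1}\}$ into boundedness along the subsequence of $r$ for which $k_{r+1}\ge2$.
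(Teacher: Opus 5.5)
Your positivity argument is fine and matches the paper's: every term of \eqref{eq:su3} is nonnegative and the constant term is $1$. The finiteness argument, however, has a genuine gap.

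\textbf{The main plan fails.} It rests on the claim that $P(r)=\sum_{n_1\ge\cdots\ge n_r\ge3}(2/n_1)^2\prod_{i\ge2}(2/n_i)$ grows at most linearly in $r$, and this is false. Write $P(r)=2^{r+1}Q_r$ with $Q_r=\sum_{n_1\ge\cdots\ge n_r\ge3}1/(n_1^2n_2\cdots n_r)$. Using $1/n_1^2\ge1/(n_1(n_1+1))$ and $\sum_{n_1\ge n_2}1/(n_1(n_1+1))=1/n_2$ gives $Q_r\ge Q_{r-1}\ge\cdots\ge Q_1>1/3$, so $P(r)>2^{r+1}/3$. In fact $P(r)$ is exactly the $r$-th term of \eqref{eq:su3} for the excluded sequence $(2,\ones{\infty})$, where $\Phi$ blows up. Since $J(r)<r$, the right-hand side of your bound $T_r\le C(2/3)^{J(r)}P(r)$ is at least $\tfrac{2C}{3}(4/3)^r$. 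It cannot give summability, even when every $k_i\ge2$. The loss comes from replacing $(2/n_i)^{k_i-1}$ by its maximum $(2/3)^{k_i-1}$: this discards exactly the suppression of large $n_i$ that keeps $T_r$ small.

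\textbf{The operator sketch does not repair this.} One computes $S_1\bigl(c(N+a)\bigr)\le c(N+1+2a)$, so the additive constant doubles along every block of ones; this is the $2^{d+1}$ in \eqref{eq:h-two-bounds}. The operator $S_k$ with $k\ge2$ then yields a bound proportional to that accumulated constant rather than resetting it. The rate $\gamma^{J(r)}$ is asserted, not derived.

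\textbf{The fallback stops at the crux.} Its first step is correct, even without constants: since $n_r-1\ge1$, \eqref{der} gives $\Phi'(y)\le\liminf_r\Lnorm(k_1,\ldots,k_r)$. But boundedness of $\Lnorm(k_1,\ldots,k_r)$ is only announced as a ``refinement'' of \cref{lem:linear-cylinder}, and that is the whole content of the lemma.

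\textbf{The missing idea.} Give up termwise decay in $r$ and sum over all depths at once, after a prefix with enough excess weight. Since infinitely many $k_i\ge2$, there is $N$ with $K_N\ge N+3$. For $r\ge N$, factor out the first $N$ variables and bound each later factor by $(2/m_i)^{k_i}\le2/m_i$. Summing over all $s\ge0$ and all chains $m_N\ge m_{N+1}\ge\cdots\ge m_{N+s}\ge3$ gives the finite product $\prod_{m=3}^{m_N}(1-2/m)^{-1}=m_N(m_N-1)/2$. Hence
\[
 \sum_{r\ge N}T_r\le2^{K_N-1}\sum_{m_1\ge\cdots\ge m_N\ge3}\frac{m_N^2}{m_1^{k_1}\cdots m_N^{k_N}},
\]
which is finite by \cref{lem:conv}, because $K_N\ge N+3$ gives $m_N^2/(m_1^{k_1}\cdots m_N^{k_N})\le1/(m_1^2m_2\cdots m_N)$.

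The lengths of the blocks of ones, which you identified as the main obstacle, play no role: all of them are absorbed into one polynomial factor in $m_N$.

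The same device also completes your fallback. Group $\Lnorm(k_1,\ldots,k_r)$ at $n_N=M$ and use $h_d(M)\le M-1$ uniformly in $d$, instead of comparing with $\mathbf{q}_{t,d}$, whose weight is only $r+2$. This gives $\Lnorm(k_1,\ldots,k_r)\le\sum_{n_1\ge\cdots\ge n_N\ge2}\prod_{i\le N}(2/n_i)^{k_i}\,n_N(n_N-1)/2$ for all $r\ge N$, which is a bound independent of $r$.
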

\begin{proof}
By \eqref{der},  one has 
\begin{equation}\label{derconv}
\begin{split}
&\;\;\;\;\Phi^\prime(y) \\
&=\mathop{\mathrm{lim}}_{r\rightarrow+\infty}\left( \sum_{\substack{m_1\geq \cdots \geq m_r\geq 2\\ m_1=\cdots=m_r=2      }} + \sum_{\substack{m_1\geq \cdots \geq m_r\geq 2\\ m_1\geq 3, m_2=\cdots=m_r=2      }}  +\cdots+  \sum_{\substack{m_1\geq \cdots \geq m_r\geq 2\\ m_1\geq \cdots\geq m_r\geq 3     }}  \right)  \left(\frac{2}{m_1}\right)^{k_1}\cdots  \left(\frac{2}{m_r}\right)^{k_r}                             \\
&=1+\sum_{r=1}^{+\infty}  \sum_{m_1\geq \cdots \geq m_r\geq 3}      \left(\frac{2}{m_1}\right)^{k_1}\cdots  \left(\frac{2}{m_r}\right)^{k_r}.\\
\end{split}
\end{equation}
As $y=\beta(k_1,\cdots,k_r,\cdots)$ is not a dyadic rational number, we have $k_i\geq 2$ for infinitely many $i\geq 1$.
Thus there is an $N\geq 2$ such that 
\[
k_1+\cdots+k_r\geq r+3
\]
for $r\geq N$. Therefore
\[
\begin{split}
&\;\;\;\;    \sum_{r=N}^{+\infty}  \sum_{m_1\geq \cdots \geq m_r\geq 3}      \left(\frac{2}{m_1}\right)^{k_1}\cdots  \left(\frac{2}{m_r}\right)^{k_r}     \\
&= \sum_{m_1\geq \cdots \geq m_N\geq 3}      \left(\frac{2}{m_1}\right)^{k_1}\cdots  \left(\frac{2}{m_N}\right)^{k_N}    \bigg{[}1+ \sum_{m_N\geq m_{N+1}\geq 3}  \left( \frac{2}{m_{N+1}}   \right)^{k_{N+1}}+\cdots\\
&\;\;\;\;+ \sum_{m_N\geq m_{N+1}\geq \cdots\geq m_{N+s}\geq 3} \left(  \frac{2}{m_{N+1}}  \right)^{k_{N+1}}  \cdots  \left(  \frac{2}{m_{N+s}}  \right)^{k_{N+s}} +\cdots  \bigg{]}                  \\
\end{split}
\]
\[
\begin{split}
&\leq \sum_{m_1\geq \cdots \geq m_N\geq 3}      \left(\frac{2}{m_1}\right)^{k_1}\cdots  \left(\frac{2}{m_N}\right)^{k_N}    \bigg{[}1+ \sum_{m_N\geq m_{N+1}\geq 3}  \left( \frac{2}{m_{N+1}}   \right)+\cdots\\
&\;\;\;\;+ \sum_{m_N\geq m_{N+1}\geq \cdots\geq m_{N+s}\geq 3} \left(  \frac{2}{m_{N+1}}  \right) \cdots  \left(  \frac{2}{m_{N+s}}  \right)+\cdots  \bigg{]}                  \\
&\leq \sum_{m_1\geq \cdots \geq m_N\geq 3}      \left(\frac{2}{m_1}\right)^{k_1}\cdots  \left(\frac{2}{m_N}\right)^{k_N}  \prod_{m_N\geq m\geq 3} \left(1-\frac{2}{m}\right)^{-1}\\
&\leq \sum_{m_1\geq \cdots \geq m_N\geq 3}      \left(\frac{2}{m_1}\right)^{k_1}\cdots  \left(\frac{2}{m_N}\right)^{k_N}  \frac{(m_N-1)m_N }{2}\\ 
&\leq 2^{k_1+\cdots+k_N-1} \sum_{m_1\geq \cdots \geq m_N\geq 3}   \frac{m_N^2}{m_1^{k_1}\cdots m_{N-1}^{k_{N-1}} m_N^{k_N}}.\\ \end{split}
\]
By \Cref{lem:conv}, one has 
\[
     0<\Phi^\prime(y)<+\infty. \]
\end{proof}

\subsection{Evaluations at multiple zeta-star values}

The following lemma  will be used in the evaluating the normalized approximation function.
\begin{lemma}\label{lem:branch-separation}
For an admissible index
$\mathbf{k}=(k_1,\ldots,k_s)$, define 
\[
\mathbf{p}=\left(k_1,\ldots,k_{s-1},k_s+1,\{1\}^{\infty}\right).\]
For  $\mathbf{c}\in\Tset$, let $t$ be the first position at which
$\mathbf{c}$ differs from
$\mathbf{p}$.
For $$\alpha=\eta({\bf p})=\zeta^\star({\bf k}),$$ there are constants $\delta^-_{\mathbf{p}},\delta^+_{\mathbf{p}}>0$ with the
following properties.
\begin{enumerate}[label=(\alph*),leftmargin=2.4em]
\item If $t\le s$ and $c_t>p_t$, then
$\eta(\mathbf{c})\le\alpha-\delta^-_{\mathbf{p}}$.
\item If $t<s$ and $c_t<p_t$, or if $t=s$ and $c_s\le p_s-2$, then
$\eta(\mathbf{c})\ge\alpha+\delta^+_{\mathbf{p}}$.
\item The only upper branch whose closure contains $\alpha$ is
$t=s$, $c_s=p_s-1$; its coordinates are precisely those beginning with
$$(p_1,\ldots,p_{s-1},p_s-1)=(k_1,\cdots,k_{s-1},k_s).$$
\end{enumerate}
\end{lemma}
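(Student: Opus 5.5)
The plan is to turn each branch condition into an inequality for a single explicit finite multiple zeta-star value. The tools are the order compatibility in \Cref{thm:Li-input}(a),(b) and the description of basic intervals $Z_{\mathbf{k}}=(\zeta^\star(\mathbf{k}),U(\mathbf{k})]$. Recall the order convention: a larger entry at the first differing position gives a \emph{smaller} value, and the extension $(k_1,\ldots,k_r,k_{r+1})\succ(k_1,\ldots,k_r)$ means a prefix value lies below every value of its cylinder. So for fixed $c_1,\ldots,c_t$, the set $\{\eta(\mathbf{c})\}$ lies in the closure of the basic interval $Z_{(c_1,\ldots,c_t)}$. That is, it lies in $[\zeta^\star(c_1,\ldots,c_t),U(c_1,\ldots,c_t)]$.

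For (a), with $t\le s$ and $c_t>p_t$, every such $\mathbf{c}$ satisfies $\eta(\mathbf{c})\le U(p_1,\ldots,p_{t-1},p_t+1)$. By definition \eqref{eq:U}, $U(p_1,\ldots,p_{t-1},p_t+1)=\zeta^\star(p_1,\ldots,p_t)$. For $t<s$ this equals $\zeta^\star(k_1,\ldots,k_t)$. By \Cref{thm:Li-input}(a), $(k_1,\ldots,k_t)\prec(k_1,\ldots,k_s)$ as a proper prefix, so this value is strictly less than $\alpha$. For $t=s$ the bound is $\zeta^\star(k_1,\ldots,k_{s-1},k_s+1)$. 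This is strictly less than $\alpha$, since it is the value $q_0$ of \eqref{eq:qd} and $D_0>0$ by \Cref{lem:natural-errors}. The finitely many gaps give $\delta^-_{\mathbf{p}}:=\min_{t\le s}(\alpha-\zeta^\star(p_1,\ldots,p_t))>0$. Uniformity over all $c_t>p_t$ is automatic because the $U$-bound does not depend on $c_t$ or on the tail.

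For (b), I use lower endpoints. If $t<s$ and $c_t<p_t$ (so $p_t\ge2$, or $t=1$ with $c_1\ge2$), then $\eta(\mathbf{c})\ge\zeta^\star(p_1,\ldots,p_{t-1},p_t-1)$. This value exceeds every value in the cylinder of $(p_1,\ldots,p_t)$, in particular $\alpha$. Strictness holds because $\alpha$ lies in the closed cylinder, whose supremum is $U(p_1,\ldots,p_t)$. I need $U(p_1,\ldots,p_t)<\zeta^\star(p_1,\ldots,p_{t-1},p_t-1)$ when $p_t\ge2$ is the last entry; by \eqref{eq:U} these are equal. So I instead compare with $\alpha$ directly: $\alpha=\zeta^\star(k_1,\ldots,k_s)$ with $s>t$ extends $(k_1,\ldots,k_t)$, and $\alpha<U(k_1,\ldots,k_t)$ strictly. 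This holds because the sequence $\mathbf{p}$ has a later entry $p_s\ge2$, hence is not the maximal tail $(\ldots,p_t-1+\text{?})$ representing $U$; more concretely, $\alpha\le U(k_1,\ldots,k_{s-1})-\ell$-type gaps via \eqref{eq:interval-length} applied to $(k_1,\ldots,k_s)$ inside $Z_{(k_1,\ldots,k_t)}$. The case $t=s$, $c_s\le p_s-2=k_s-1$ works similarly. Here $\eta(\mathbf{c})\ge\zeta^\star(k_1,\ldots,k_{s-1},k_s-1)=U(\mathbf{k})>\alpha$, using $\ell(\mathbf{k})>0$ by \eqref{eq:interval-length} (when $k_s-1\ge1$, or with the admissibility adjustment for $s=1$). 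Taking the minimum of finitely many positive gaps gives $\delta^+_{\mathbf{p}}$.

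For (c), the branches remaining after (a) and (b) are $t>s$, which cannot have a closure containing $\alpha$ from above, together with $t=s$, $c_s=p_s-1=k_s$. The latter are exactly the sequences beginning with $(k_1,\ldots,k_s)$, whose values fill $Z_{\mathbf{k}}$ with infimum $\zeta^\star(\mathbf{k})=\alpha$. When $t>s$, we have $c_t>p_t=1$ and, by (a)-type reasoning, $\eta(\mathbf{c})<\alpha$; these form the lower branches. I expect the main obstacle to be the strictness in (b) for $t<s$: proving that $\alpha$ stays a positive distance below $U(k_1,\ldots,k_t)$. I would handle it by noting that $\alpha$ lies in the closed cylinder of $(k_1,\ldots,k_{t},\ldots,k_{s-1},k_s+1)$, whose upper endpoint is $\zeta^\star(k_1,\ldots,k_s)=\alpha$. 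Chaining upper endpoints through depths $t+1,\ldots,s$, each step is strict because the intervals at each depth are disjoint with positive lengths.
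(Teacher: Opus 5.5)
Your argument is correct, and it takes a different route from the paper's.

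\textbf{Comparison with the paper.} The paper works in the binary coordinate. It bounds the part of $\beta(\mathbf c)$ from the $t$-th digit onward by the geometric sums $2^{-P_t}$ (when $c_t>p_t$) and $2^{-(P_t-1)}$ (when $c_t<p_t$), and compares these with $\sum_{j\ge t}2^{-P_j}$. The equality case (all later digits of $\mathbf p$ equal to $1$) forces $t=s$, $c_s=p_s-1$. The finitely many coordinate gaps are then transported through the strictly increasing map $\Phi$.

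You stay on the zeta-star side throughout. Each branch is cut off by an explicit finite value: $\zeta^\star(p_1,\ldots,p_t)$ from below, and $\zeta^\star(k_1,\ldots,k_{t-1},k_t-1)$ or $U(\mathbf k)$ from above. Strictness comes from \Cref{thm:Li-input}(a) together with the monotonicity of prefix values.

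Since $\Phi\circ\beta^f=\zeta^\star$ (\cref{prop:finite-coding}), your thresholds are exactly the images of the paper's binary thresholds $\beta^f(p_1,\ldots,p_t)$ and $\beta^f(p_1,\ldots,p_{t-1},p_t-1)$. So both proofs cut at the same points.

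What each route buys:
\begin{itemize}
\item Yours never uses $\Phi$, and it makes the constants explicit and sharp. The minima are attained at $t=s$ and at the last $j\le s$ with $k_j\ge2$. This gives $\delta^-_{\mathbf p}=D_0$ and, whenever $\ell(\mathbf k)<+\infty$ (otherwise (b) is vacuous), $\delta^+_{\mathbf p}=\ell(\mathbf k)$. Neither can be enlarged: $\eta(\mathbf c)=q_0$ is attained in an (a)-branch, and (b)-branch values accumulate at $U(\mathbf k)$.
\item The paper's route keeps the argument in the dyadic language used throughout \Cref{sec:integers}.
\end{itemize}

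\textbf{Repair needed in (b).} The strictness you flagged as the main obstacle is immediate, and the text you wrote there should be replaced rather than patched. For $t<s$ with $c_t<k_t$, the indices $(k_1,\ldots,k_{t-1},k_t-1)$ and $(k_1,\ldots,k_s)$ first differ at position $t$, and the former has the smaller entry there. So \Cref{thm:Li-input}(a) gives $\zeta^\star(k_1,\ldots,k_{t-1},k_t-1)>\alpha$ directly.

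Your ``not the maximal tail'' idea also works once stated precisely. $U(k_1,\ldots,k_t)=\eta(k_1,\ldots,k_t,\{1\}^\infty)$, and this sequence differs from $\mathbf p$ because $p_s\ge2$ with $s>t$. Injectivity of $\eta$ then rules out equality.

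Your chaining claim that every step from depth $t$ to depth $s$ is strict is false as stated, because appending an entry $1$ leaves $U$ unchanged. Only the last step, to the entry $k_s+1\ge2$, needs to be strict. It is, since $U(k_1,\ldots,k_{s-1},k_s+1)=\alpha<U(\mathbf k)\le U(k_1,\ldots,k_{s-1})$.
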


\begin{proof}
Denote by 
\[
 P_j:=p_1+\cdots+p_j,\quad j=1,\cdots, s.
\]
Suppose first that $c_t>p_t$.  Among all such points, the largest binary
coordinate is approached when $c_t=p_t+1$ and all later digits are $1$.  Its
contribution from the $t$-th digit onward is
\[
 \sum_{n=P_t+1}^{\infty}2^{-n}=2^{-P_t},
\]
which accounts only for the $t$-th binary $1$ of $\mathbf{p}$ and omits the
strictly positive tail after it.  This gives a positive gap on the lower side.

Now suppose that $c_t<p_t$.  The closest point on the upper side is obtained
in the closure by taking $c_t=p_t-1$ and sending the next digit to infinity.
Its contribution from position $t$ onward is then $2^{-(P_t-1)}$.  On the
other hand,
\[
 \sum_{j\ge t}2^{-P_j}\le\sum_{n=P_t}^{\infty}2^{-n}
 =2^{-(P_t-1)},
\]
and equality holds precisely when every digit of $\mathbf{p}$ after position
$t$ is $1$.  For $t<s$ this is impossible because $p_s=k_s+1\ge2$; for
$t=s$ it holds and gives the condition $c_s=p_s-1=k_s$.  If instead
$c_s\le p_s-2$, the first binary $1$ moves still farther
left, so the gap is again strict.

There are only finitely many relevant positions $t\le s$. Taking the minimum of their positive binary gaps,
and then applying the continuous strictly increasing map $\Phi$, gives
$\delta^-_{\mathbf p}$ and $\delta^+_{\mathbf p}$.
\end{proof}

\begin{theorem}
\label{thm:exact-finite}
For every finite admissible index $\mathbf{k}=(k_1,\cdots,k_s)$,
\begin{equation}\label{eq:exact-finite}
 \mathcal{N}\bigl(\zeta^\star(\mathbf{k})\bigr)
 =\Lnorm(\mathbf{k})
 =2^{\wt(\mathbf{k})}
 \sum_{n_1\ge\cdots\ge n_s\ge2}
 \frac{n_s-1}{n_1^{k_1}\cdots n_s^{k_s}}.
\end{equation}
\end{theorem}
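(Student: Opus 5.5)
We prove the two inequalities $\mathcal{N}(\alpha)\le\Lnorm(\mathbf{k})$ and $\mathcal{N}(\alpha)\ge\Lnorm(\mathbf{k})$ separately, where $\alpha=\zeta^\star(\mathbf{k})$ and $\mathbf{p}=(k_1,\ldots,k_{s-1},k_s+1,\ones{\infty})=\eta^{-1}(\alpha)$. The upper bound is immediate from \cref{lem:natural-errors}. The natural lower prefixes $q_d$ have depth $s+d$, weight $K+d+1$ and normalized error $B_d$, so $\Delta_{s+d}(\alpha)\le B_d\le\Lnorm(\mathbf{k})$ for every $d$. Taking the liminf gives $\mathcal{N}(\alpha)\le\Lnorm(\mathbf{k})$; this is trivial if $\Lnorm=+\infty$.

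For the lower bound, fix a large depth $r$ and an arbitrary $\mathbf{c}'\in\Sset_r$ with value $z=\zeta^\star(\mathbf{c}')$. We must show that $2^{\wt(\mathbf{c}')}|\alpha-z|$ is at least $\Lnorm(\mathbf{k})-o(1)$, uniformly in $\mathbf{c}'$, as $r\to\infty$. We split according to the position of $z$ relative to $\alpha$, using \cref{lem:branch-separation}. Write $z=\eta(\mathbf{c})$, where $\mathbf{c}=(c'_1,\ldots,c'_{r-1},c'_r+1,\ones{\infty})$.

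\textbf{Separated branches.} If $z$ lies on a separated branch (cases (a) or (b)), then $|\alpha-z|\ge\min(\delta^-_{\mathbf p},\delta^+_{\mathbf p})$. Since $\wt(\mathbf{c}')\ge r+1$, the normalized error is at least $2^{r+1}\min\delta\to\infty$.

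\textbf{Lower branch.} Suppose $z<\alpha$ and $\mathbf{c}$ agrees with $\mathbf{p}$ through position $s$. Then $\mathbf{c}'$ extends $(k_1,\ldots,k_{s-1},k_s+1)$ (or coincides with it, when $r=s$). At depth $r=s+d$, the order structure (\cref{thm:Li-input}(a)) shows that the depth-$r$ admissible index below $\alpha$ closest to it in the order is $(k_1,\ldots,k_s+1,\ones{d})$, with value $q_d$. Every other such $\mathbf{c}'$ either has larger weight while still beginning with $(k_1,\ldots,k_s+1)$, or lies further below. I would argue $2^{\wt(\mathbf{c}')}(\alpha-z)\ge B_d$ as follows. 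Let $e$ be the first position after $s$ where $c'_e\ge2$. Then $z\le q_{e-s-1}$-type value and $\alpha-z\ge\alpha-\zeta^\star(\text{prefix})$, so the error is at least $2^{\wt(\mathbf{c}')}D_{e-s-1}$. Because $\wt(\mathbf{c}')\ge K+1+(e-s-1)+1$, this is at least $2B_{e-s-1}\ge B_{e-s}$ by \cref{lem:natural-errors}. Iterating this gives a bound $\ge B_{\min}$, and since $B_d\uparrow\Lnorm(\mathbf{k})$, the bound exceeds $B_{d'}$ for some $d'\to\infty$. I expect this step to need care, because of the bookkeeping when several digits exceed $1$. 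The key facts to use are $B_{d+1}<2B_d$, so that an extra unit of weight more than compensates for shortening, and the monotonicity $B_d\uparrow$.

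\textbf{Upper branch.} Suppose $z>\alpha$ and $\mathbf{c}'$ begins with $\mathbf{k}$ (case (c)). Then $z=\zeta^\star(\mathbf{k},c'_{s+1},\ldots,c'_r)$, and
\[
 z-\alpha=\sum_{n_1\ge\cdots\ge n_r\ge 1,\ n_{s+1}\ge2}\frac{1}{n_1^{c'_1}\cdots n_r^{c'_r}}
\]
summed over tails that are not all $1$. This quantity resembles $\zeta^\star(\mathbf{k})$ tail terms. I would bound it from below by keeping terms with $n_{s+1}=\cdots=n_r=2$, which gives $2^{-(\wt(\mathbf{c}')-K)}\sum_{n_1\ge\cdots\ge n_s\ge 2}n_1^{-k_1}\cdots n_s^{-k_s}$, times a factor from the multiplicity. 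This yields a normalized error of at least $2^K\sum_{n_s\ge2}\cdots$, which is $\ge\Lnorm(\mathbf{k})$ only if the weights $n_s-1$ can be recovered. To recover them, I would keep all terms with $n_s\ge n_{s+1}\ge\cdots\ge n_r\ge2$ and use $\sum 2^{\cdots}\prod(2/n_i)^{c'_i}$, comparing with $h$-type recursions. Mirroring \cref{lem:natural-errors}, I expect the upper-branch normalized errors to decrease to $\Lnorm(\mathbf{k})$ from above, or to be bounded below by it. This upper-branch comparison is the main obstacle; I would first try to prove $2^{\wt}(z-\alpha)\ge2^K\sum(n_s-1)/\prod n_i^{k_i}$ by induction on $r-s$ via the operator $T$.

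Combining the three cases gives $\Delta_r(\alpha)\ge\Lnorm(\mathbf{k})-o(1)$, hence $\mathcal{N}(\alpha)\ge\Lnorm(\mathbf{k})$. The infinite case $\mathbf{k}=(2,\ones{s-1})$ follows because the lower-branch errors $B_d\to\infty$ and the upper branch is empty.
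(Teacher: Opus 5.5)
Your overall architecture is the paper's: the upper bound comes from the natural prefixes $q_d$ via \cref{lem:natural-errors}, and the lower bound from the three-way split of \cref{lem:branch-separation}. The separated-branch case is handled exactly as in the paper.

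\textbf{Upper branch.} This case is the genuine gap: you leave it open, and the route you propose cannot close it. The inequality $2^{\wt(\mathbf c')}(z-\alpha)\ge\Lnorm(\mathbf k)$, which you want to prove by induction on $r-s$, is already false at $r=s+1$. For $\mathbf c'=(\mathbf k,c)$ one has
\[
 2^{\wt(\mathbf c')}(z-\alpha)=2^{K}\sum_{n_1\ge\cdots\ge n_s\ge2}\frac{1}{n_1^{k_1}\cdots n_s^{k_s}}\sum_{m=2}^{n_s}\Bigl(\frac{2}{m}\Bigr)^{c},
\]
and $\sum_{m=2}^{N}(2/m)^{c}<N-1$ for $N\ge3$. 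Nor do these errors decrease to $\Lnorm(\mathbf k)$ from above; they are at least of order $2^{d}$, where $d=r-s$.

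The missing idea is a better choice of retained terms. Setting $n_{s+1}=\cdots=n_r=2$ produces the factor $2^{-(c'_{s+1}+\cdots+c'_r)}$, which exactly cancels the normalization and leaves a $d$-independent constant. Instead keep only $n_{s+1}=2$ and $n_{s+2}=\cdots=n_r=1$. This is legitimate, since $z-\alpha$ is the subseries with $n_{s+1}\ge2$. It gives
\[
 2^{\wt(\mathbf c')}(z-\alpha)\ge 2^{c'_{s+2}+\cdots+c'_r}\,C_{\mathbf k}\ge 2^{d-1}C_{\mathbf k},
 \qquad
 C_{\mathbf k}:=2^{K}\sum_{n_1\ge\cdots\ge n_s\ge2}\frac{1}{n_1^{k_1}\cdots n_s^{k_s}}>0,
\]
which tends to $+\infty$. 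So no recovery of the weights $n_s-1$ is needed.

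The upper branch is also not empty for $\mathbf k=(2,\ones{s-1})$. For instance, $(2,\ones{s})$ is a depth-$(s+1)$ index with $\zeta^\star(2,\ones{s})>\alpha$. So your treatment of the infinite case also depends on this bound.

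\textbf{Lower branch.} Your bookkeeping here also needs repair. Let $e>s$ be the first position with $c'_e\ge2$. Then $\mathbf c'$ strictly extends $(k_1,\ldots,k_{s-1},k_s+1,\ones{e-s-1})$, which is the index of $q_{e-s-1}$. Hence $z>q_{e-s-1}$, not $z\le q_{e-s-1}$. The correct comparison is $z<q_{e-s}$, i.e.\ $\alpha-z>D_{e-s}$.

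Moreover, your weight count discards $c'_{e+1},\ldots,c'_r$. It therefore only yields a bound like $B_{e-s}$, which for $e=s+1$ is $B_1$ and need not be close to $\Lnorm(\mathbf k)$; ``iterating'' does not repair this. Count every entry instead. Suppose $\mathbf c$ first differs from $\mathbf p$ at position $s+e'$ with $1\le e'\le d$. Then $c_{s+e'}\ge2$, so $\wt(\mathbf c')+1=c_1+\cdots+c_r\ge K+d+2$, and hence
\[
 2^{\wt(\mathbf c')}(\alpha-z)\ge 2^{K+d+1}D_{e'}=2^{d-e'}B_{e'}\ge B_d
\]
by iterating $B_{j+1}<2B_j$. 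Phrasing this with $\mathbf c$ rather than $\mathbf c'$ also covers $q_d$ itself, where the mismatch is at position $r$ via $c_r=c'_r+1=2$.

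With both repairs, $\Delta_{s+d}(\alpha)=B_d$ for all large $d$ when $\Lnorm(\mathbf k)<+\infty$, and $\Delta_{s+d}(\alpha)\to+\infty$ otherwise. This is exactly the paper's conclusion.
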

\begin{proof}
 For $d\geq 1$, let $r=s+d$, and let $\mathbf{b}$ be any
admissible index of depth $r$.  The infinite sequence of
$\eta^{-1}\left(\zeta^\star(\mathbf{b})\right)$ is
\[
 \widehat{\mathbf{b}}=\left(\widehat{b}_i\right)_{i\geq 1}
 =(b_1,\ldots,b_{r-1},b_r+1,\{1\}^{\infty}).
\]
Denote by $\alpha=\zeta^\star\left({\bf k}\right)$. 
Define \[
{\bf p}=\eta^{-1}(\alpha)= \left(k_1,\cdots, k_{s-1}, k_s+1,\{1\}^{\infty}\right).
\]
If the first difference between $\widehat{\mathbf{b}}$ and $\mathbf{p}$ occurs
within the first $s$ positions and is not the adjacent upper branch, then
\cref{lem:branch-separation} and
$\wt(\mathbf{b})\ge r+1$ give a lower bound of the form
\begin{equation}\label{eq:separated-growth}
 2^{\wt(\mathbf{b})}
 \bigl|\alpha-\zeta^\star(\mathbf{b})\bigr|
 \ge 2^{r+1}\delta_{\mathbf{p}},
\end{equation}
for a fixed $\delta_{\mathbf{p}}>0$.

Suppose next that the first difference occurs at $t=s+e$ with
$1\le e\le d$.  Since the tail of $\mathbf p$ consists of $1$'s, one has
$\widehat b_t\ge2$, and the order structure gives
$\zeta^\star(\mathbf b)<\alpha$.  The largest value in this branch is the
real number $q_e$ defined in \eqref{eq:qd}, so
$\alpha-\zeta^\star(\mathbf b)\ge D_e$.  Moreover,
$ b_i=p_i$ for $i<t$, and
$\sum_{i=1}^{r} \widehat{b}_i=\wt(\mathbf b)+1$.  
Denote by $K=k_1+\cdots+k_s$.
Consequently,
\[
 \wt(\mathbf b)+1
 \ge (K+1)+(e-1)+2+(d-e)=K+d+2,
\]
and hence
\[
 2^{\wt(\mathbf{b})}
 \bigl(\alpha-\zeta^\star(\mathbf{b})\bigr)
 \ge 2^{K+d+1}D_e
 =2^{d-e}B_e\ge B_d,
\]
where the last inequality follows by iterating $B_{j+1}<2B_j$.
The real number $q_d$ itself has depth $s+d$ and normalized error exactly
$B_d$.

Finally, since $d\ge1$, an adjacent upper bound has the form
\[
 \mathbf{b}=(k_1,\ldots,k_s,c_1,\ldots,c_d),
 \qquad c_i\ge1.
\]
Retaining only the positive subseries with
$n_{s+1}=2$ and $n_{s+2}=\cdots=n_{s+d}=1$ gives
\[
 \zeta^\star(\mathbf{b})-\alpha
 \ge 2^{-c_1}
 \sum_{n_1\ge\cdots\ge n_s\ge2}
 \frac{1}{n_1^{k_1}\cdots n_s^{k_s}}.
\]
Therefore
\begin{equation}\label{eq:adjacent-upper-growth}
 2^{\wt(\mathbf{b})}
 \bigl(\zeta^\star({\mathbf{b}})-\alpha\bigr)
 \ge C_{\mathbf{k}}2^{d-1},
\end{equation}
where
\[
 C_{\mathbf{k}}
 :=2^K\sum_{n_1\ge\cdots\ge n_s\ge2}
 \frac{1}{n_1^{k_1}\cdots n_s^{k_s}}>0.
\]

The preceding alternatives are exhaustive: the first mismatch occurs either
within the first $s$ positions, after position $s$, or in the unique adjacent
upper branch described in \cref{lem:branch-separation}.

If $\Lnorm(\mathbf{k})<+\infty$, then $B_d$ increases to
$\Lnorm(\mathbf{k})$, while the bounds
\eqref{eq:separated-growth} and \eqref{eq:adjacent-upper-growth} tend to
$+\infty$.  Hence
\[
 \Delta_{s+d}(\alpha)=B_d
\]
for all sufficiently large $d$.  If $\Lnorm(\mathbf{k})=+\infty$, all three
potential bounds tend to $+\infty$, including the adjacent lower bound
$B_d$, so $\Delta_{s+d}(\alpha)\to+\infty$.  In both cases,
\[
 \mathcal{N}(\alpha)=\lim_{d\to\infty}B_d=\Lnorm(\mathbf{k}).
\]
\end{proof}

\begin{corollary}\label{cor:spikes}
For every finite admissible index $\mathbf{k}$,
\[
 \mathcal{N}\bigl(\zeta^\star({\mathbf{k}})\bigr)>1
 \quad\text{or}\quad
 \mathcal{N}\bigl(\zeta^\star({\mathbf{k}})\bigr)=+\infty.
\]
The second alternative occurs exactly when
$\mathbf{k}=(2,\ones{s-1})$.
\end{corollary}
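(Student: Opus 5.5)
By \cref{thm:exact-finite}, $\mathcal{N}(\zeta^\star(\mathbf{k}))=\Lnorm(\mathbf{k})$. The corollary therefore reduces to two claims about the normalized interval length: first, $\Lnorm(\mathbf{k})=+\infty$ exactly when $\mathbf{k}=(2,\ones{s-1})$; second, $\Lnorm(\mathbf{k})>1$ for every other admissible $\mathbf{k}$.

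The characterization of the infinite case is already recorded in \eqref{eq:infinite-interval}, since $\Lnorm(\mathbf{k})=2^{\wt(\mathbf{k})}\ell(\mathbf{k})$ and $2^{\wt(\mathbf{k})}$ is finite and positive. If one wants to check it directly from \eqref{eq:Lnorm-series}, the argument runs as follows.
\begin{itemize}
\item For $\mathbf{k}=(2,\ones{s-1})$, restrict the sum to $n_2=\cdots=n_s=2$. The remaining sum over $n_1\ge2$ of $(2/n_1)^2(2/2)^{s-2}(2/2)(2-1)$ diverges when $s=1$ because of the factor $n_1-1$. For $s\ge2$ the natural restriction is $n_1=\cdots=n_s=N$, which gives $\sum_N (N-1)(2/N)^{s+1}$; this converges, so that grouping does not exhibit the divergence.
\item It is cleaner to group by $n_s=N$ and use $\ell=U-\zeta^\star$ with $U=+\infty$. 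I will simply cite \eqref{eq:infinite-interval}.
\end{itemize}

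For the strict lower bound, I will isolate the terms of \eqref{eq:Lnorm-series} with $n_1=\cdots=n_s=2$ and with $n_1=\cdots=n_{s-1}=2$, $n_s\ge3$ impossible by ordering. The correct split is:
\begin{itemize}
\item The single term $n_1=\cdots=n_s=2$ contributes $(2-1)\prod(2/2)^{k_i}=1$.
\item Every other term is strictly positive, because each factor $(n_r-1)\ge1$ and each $(2/n_i)^{k_i}>0$.
\item The index set always contains further terms, for instance $n_1=3$ and $n_2=\cdots=n_s=2$. When $s=1$, take $n_1=3$, which gives the term $2\cdot(2/3)^{k_1}>0$.
\end{itemize}
Hence $\Lnorm(\mathbf{k})\ge 1+(\text{positive})>1$ whenever it is finite, and the two alternatives of the corollary follow.

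There is no real obstacle here. The only point needing care is that the exceptional case is exactly $(2,\ones{s-1})$, and that is supplied by \eqref{eq:infinite-interval} (Lemma 4.4 of \cite{LiDA}).
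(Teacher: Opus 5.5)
Your proof is correct and is essentially the paper's argument: you reduce via \cref{thm:exact-finite} to $\Lnorm(\mathbf{k})$, cite \eqref{eq:infinite-interval} for the infinite case, and observe that the term $n_1=\cdots=n_s=2$ contributes exactly $1$ while further strictly positive terms exist (e.g.\ $n_1=3$, $n_2=\cdots=n_s=2$). The abandoned direct divergence check in your first bullet is unnecessary and can simply be deleted.
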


\begin{proof}
In \eqref{eq:exact-finite}, the term
$n_1=\cdots=n_s=2$ contributes exactly $1$ after multiplication by
$2^{\wt(\mathbf{k})}$, and there are further positive terms.  The infinite
case is characterized by \eqref{eq:infinite-interval}.
\end{proof}

\begin{theorem}\label{thm:nowhere-continuous}
The map $\alpha\mapsto\mathcal{N}(\alpha)$ from $(1,+\infty)$ to the extended
half-line $[0,+\infty]$, endowed with its order topology, is nowhere
continuous.
\end{theorem}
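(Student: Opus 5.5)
The plan is to show that at every point $\alpha_0\in(1,+\infty)$ the function $\mathcal{N}$ fails to be continuous. We combine two density statements already established: the zero set $\mathcal{Z}_0$ is dense (\cref{thm:residual-zero}), and the finite multiple zeta-star values, at which $\mathcal{N}>1$ or $\mathcal{N}=+\infty$ by \cref{cor:spikes}, are dense by \cref{thm:Li-input}(c). Continuity at $\alpha_0$ in the order topology of $[0,+\infty]$ means the following. For every open neighbourhood $W$ of $\mathcal{N}(\alpha_0)$ in $[0,+\infty]$, there is an interval $J\ni\alpha_0$ with $\mathcal{N}(J)\subseteq W$.

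We split according to the value $v=\mathcal{N}(\alpha_0)$.

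First suppose $v>0$, allowing $v=+\infty$. Take $W=(v/2,+\infty]$, or $W=(1,+\infty]$ when $v=+\infty$. Either set is an open neighbourhood of $v$ that excludes $0$. Every interval $J\ni\alpha_0$ meets $\mathcal{Z}_0$, so it contains a point with $\mathcal{N}=0\notin W$. Hence $\mathcal{N}$ is discontinuous at $\alpha_0$.

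Now suppose $v=0$. Take $W=[0,1)$, which is open in the order topology. Every interval $J\ni\alpha_0$ contains some $\zeta^\star(\mathbf{k})$ by density of $\mathcal{Z}^\star$. By \cref{thm:exact-finite} and \cref{cor:spikes}, $\mathcal{N}(\zeta^\star(\mathbf{k}))$ is either $>1$ or $+\infty$, so it lies outside $W$. This again gives discontinuity.

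The two cases exhaust $[0,+\infty]$, which proves the theorem.

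The argument has no real obstacle once the earlier results are in hand. The only point needing care is the topology: we must check that the sets $W$ used are genuinely open in the order topology of $[0,+\infty]$. This holds because $[0,1)$ and $(c,+\infty]$ are basic open rays. We must also note that the case $v=+\infty$ is covered by a ray $(c,+\infty]$ that avoids $0$. All substantive input, namely the exact value $\Lnorm(\mathbf{k})>1$ at finite points and the density of zeros, comes from \cref{thm:exact-finite} and \cref{thm:residual-zero}.
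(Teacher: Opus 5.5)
Your proof is correct and is essentially the paper's argument. Both use the density of the zero set from \cref{thm:residual-zero} together with the density of $\mathcal{Z}^\star$, where \cref{cor:spikes} gives values $>1$ or $+\infty$. The only difference is packaging: the paper transports $[0,+\infty]$ to $[0,1]$ via $\psi(t)=t/(1+t)$ and notes that every neighbourhood has oscillation at least $1/2$, whereas you split into the cases $\mathcal{N}(\alpha_0)>0$ and $\mathcal{N}(\alpha_0)=0$ with explicit open rays. The two presentations are equivalent.
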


\begin{proof}
Every nonempty open interval contains a point where
$\mathcal{N}(\alpha)=0$ by \cref{thm:residual-zero}, and it contains a 
multiple zeta-star value by density of $\mathcal{Z}^\star$.  At the latter point,
\cref{cor:spikes} gives a value strictly larger than $1$ or equal to
$+\infty$.  Under the compactifying homeomorphism
$\psi(t)=t/(1+t)$ and $\psi(+\infty)=1$, the image of every neighborhood has
oscillation at least $1/2$.  Thus continuity fails at every point.
\end{proof}

\begin{theorem}
For every $\alpha>1$,
\[
\mathcal N(\alpha)=+\infty
\qquad\Leftrightarrow\qquad
\alpha=\zeta^\star(2,\ones{s-1})\text{ for some }s\ge1.
\]
\end{theorem}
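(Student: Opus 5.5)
The direction ``$\Leftarrow$'' is immediate from \cref{thm:exact-finite} together with \cref{cor:spikes}. If $\alpha=\zeta^\star(2,\ones{s-1})$, then $\mathcal{N}(\alpha)=\Lnorm(2,\ones{s-1})$, and this is $+\infty$ by \eqref{eq:infinite-interval}. For $s=1$ we read $(2,\ones{0})=(2)$ and $\alpha=\zeta(2)$. So the real content is ``$\Rightarrow$'': we must show $\mathcal{N}(\alpha)<+\infty$ whenever $\alpha$ is not of this form. We split into two cases according to whether $\alpha\in\mathcal{Z}^\star$.

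\emph{Case 1: $\alpha=\zeta^\star(\mathbf{k})$ for some finite admissible $\mathbf{k}$.} First we need to rule out that $\alpha$ has a second representation $\zeta^\star(2,\ones{s-1})$. This follows from injectivity: by \cref{thm:Li-input}(a), distinct finite indices are strictly ordered and so have distinct values. Hence $\mathbf{k}\neq(2,\ones{s-1})$. Then \cref{cor:spikes} (equivalently \eqref{eq:infinite-interval} with \cref{thm:exact-finite}) gives $\mathcal{N}(\alpha)=\Lnorm(\mathbf{k})<+\infty$.

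\emph{Case 2: $\alpha\notin\mathcal{Z}^\star$.} Write $\alpha=\Phi(x)$ with $x=\tau(\alpha)\in(0,\tfrac12)$. We first check that $x$ is non-dyadic. A dyadic $x$ lies in some $\Dset_r$ (recall $x<1/2$, so its first binary digit is $0$), and then \cref{prop:finite-coding} gives $\alpha=\Phi(x)=\zeta^\star(\mathbf{k})$, which we have excluded. Since $x$ is non-dyadic, \cref{derivative} gives $0<\Phi'(x)<+\infty$, and \cref{prop:differentiable} then yields
\[
\mathcal{N}(\alpha)=\Phi'(x)\Lambda(x).
\]
By \cref{lem:orbit-formula}, $\Lambda(x)=\liminf_n\|2^nx\|\le\tfrac12$, so $\mathcal{N}(\alpha)\le\tfrac12\Phi'(x)<+\infty$.

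The main point needing care is the hypothesis of \cref{derivative}. It requires $x=\beta(\mathbf{c})$ with $\mathbf{c}=\eta^{-1}(\alpha)\in\Tset$, where differentiability is understood at $x$ itself; this holds because of \eqref{eq:Phi-eta}. We must also confirm that ``non-dyadic'' in \cref{derivative} and \cref{prop:differentiable} refers to the same notion. Both are stated for non-dyadic points of $(0,\tfrac12)$, and all non-dyadic $x$ arise as $\beta(\mathbf{c})$ for sequences $\mathbf{c}$ with infinitely many entries $\ge2$, so no further work is needed.

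Alternatively, we could avoid the derivative entirely. One would use \cref{lem:prefix-estimate} together with the fact that infinitely many $k_{r+1}\ge2$, but that only gives the bound $O(r)$, which is useless here. The derivative route is therefore the one to use.
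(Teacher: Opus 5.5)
Your proposal is correct and follows essentially the same route as the paper. \cref{cor:spikes} settles the multiple zeta-star points, and for $\alpha\notin\mathcal{Z}^\star$ the coordinate $x$ is non-dyadic, so \cref{derivative} and \cref{prop:differentiable} give $\mathcal{N}(\alpha)=\Phi'(x)\Lambda(x)<+\infty$. Two small differences are harmless refinements: your explicit injectivity check via \cref{thm:Li-input}(a), which the paper leaves implicit, and your bound $\Lambda\le\tfrac12$ in place of the paper's $\Lambda\le1$.
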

\begin{proof}
By \Cref{cor:spikes}, it suffices to show that 
\[
\mathcal{N}(\alpha)<+\infty
\]
for $\alpha\notin \mathcal{Z}^\star$. 
For $\alpha=\Phi(y)\notin \mathcal{Z}^\star$, $y$ is not a dyadic rational number.
 By \Cref{derivative}, 
\[
\Phi^\prime(y)<+\infty.
\]
Since $\Lambda(y)\leq 1$, 
by \Cref{prop:differentiable}, we have 
\[
\mathcal{N}(\Phi(y))=\Phi^\prime (y) \Lambda(y)\leq \Phi^\prime(y)<+\infty\]
for $\alpha=\Phi(y)\notin \mathcal{Z}^\star$. \end{proof}

\subsection{Examples}
For $k\ge3$, the upper endpoint of the basic interval for the one-term index
$(k)$ is $\zeta(k-1)$.  Hence
\begin{equation}\label{eq:zeta-examples}
 \mathcal{N}(\zeta(2))=+\infty,
 \qquad
 \mathcal{N}(\zeta(k))
 =2^k\bigl(\zeta(k-1)-\zeta(k)\bigr)\quad(k\ge3).
\end{equation}
Since
\[
 \zeta(k-1)-\zeta(k)
 =2^{-k}+O(3^{-k}),
\]
one obtains
\begin{equation}\label{eq:zeta-limit}
 \mathcal{N}(\zeta(k))
 =1+O\bigl((2/3)^k\bigr)\longrightarrow1^+.
\end{equation}
Thus $1\in \overline{\mathrm{Im}\;\mathcal{N}}$, although no multiple
zeta-star point has normalized value $1$.

Euler's identity $\zeta(3,1)=\tfrac14\zeta(4)$ \cite{HoffmanMHS} gives
$\zeta^\star(3,1)=\tfrac54\zeta(4)$, and therefore
\[
 \mathcal{N}\bigl(\zeta^\star(3,1)\bigr)
 =16\left(\zeta(2)-\frac54\zeta(4)\right)
 \approx4.6724803953.
\]

Fix an integer $n\ge2$ and define the block
\begin{equation}\label{eq:block-bn}
 \mathbf{b}_n:=(2,\ones{n-2}),
\end{equation}
where the sequence of $1$'s is empty when $n=2$.
By the Ohno-Wakabayashi cyclic sum formula in Theorem $1$ of \cite{OW}, 
\begin{equation}\label{eq:cyclic-spec}
 \zeta^\star(\{\mathbf{b}_n\}^{a},1)
 =n\,\zeta(an+1).
\end{equation}
The formula \eqref{eq:cyclic-spec} can be found in Examples (b) in \cite{OW}.

Set
\begin{equation}\label{eq:Qn-xn}
 Q_n:=2^n-1,
 \qquad
 x_n:=\sum_{q=0}^{\infty}\sum_{t=2}^{n}2^{-(qn+t)}
 =\frac{2^{n-1}-1}{2^n-1}
 =\frac{1}{2}-\frac1{2Q_n}.
\end{equation}
Thus the binary expansion of $x_n$ is
\[
 (0\underbrace{1\cdots1}_{n-1})^{\infty}.
\]
For $a\ge1$, define
\[
 \mathbf{K}_{n,a}:=(\{\mathbf{b}_n\}^{a},1)
\]
and
\begin{equation}\label{eq:xna}
 x_{n,a}:=\beta^f(\mathbf{K}_{n,a})
 =\sum_{q=0}^{a-1}\sum_{t=2}^{n}2^{-(qn+t)}+2^{-an-1}.
\end{equation}
The tail of $x_n$ after $a$ periods is $2^{-an}x_n$, so
\begin{equation}\label{eq:xna-distance}
 x_{n,a}-x_n
 =2^{-an}\left(\frac{1}{2}-x_n\right)
 =\frac{2^{-an}}{2Q_n}.
\end{equation}
By \cref{prop:finite-coding} and \eqref{eq:cyclic-spec},
\[
 \Phi(x_{n,a})=n\zeta(an+1).
\]
Letting $a\to+\infty$ and using continuity of $\Phi$, one has
\begin{equation}\label{eq:integer-coded}
 \Phi(x_n)=n.
\end{equation}
The denominator $Q_n$ is odd and greater than $1$, so $x_n$ is non-dyadic.
By Theorem $1.7$ in \cite{HMOInfinite}, the one-sided derivatives of $\Phi$
therefore agree at $x_n$.  Since
$x_{n,a}>x_n$, 
by \eqref{eq:xna-distance} we have
\begin{align}
 \Phi'(x_n)
 &=\lim_{a\to+\infty}
 \frac{\Phi(x_{n,a})-\Phi(x_n)}{x_{n,a}-x_n}\notag\\
 &=\lim_{a\to+\infty}
 2nQ_n2^{an}(\zeta(an+1)-1)
 =nQ_n.\label{eq:integer-derivative}
\end{align}

The candidate $\mathbf{K}_{n,a}$ has depth and weight
\begin{equation}\label{eq:candidate-depth-weight}
 \dep(\mathbf{K}_{n,a})=a(n-1)+1,
 \qquad
 \wt(\mathbf{K}_{n,a})=an+1.
\end{equation}
It yields the upper bound
\begin{align}
 \Delta_{a(n-1)+1}(n)
 &\le2^{an+1}
 \left|n-\zeta^\star(\{\mathbf{b}_n\}^{a},1)\right|\notag\\
 &=n2^{an+1}(\zeta(an+1)-1)
 \longrightarrow n.\label{eq:integer-upper}
\end{align}
Thus $\mathcal{N}(n)\le n$.

The lower bound is arithmetic.

\begin{lemma}\label{lem:sep}
Let $\mathbf{k}$ be any finite admissible index of weight $w$.  Then
\begin{equation}\label{eq:sep}
 2^w\bigl|\beta^f(\mathbf{k})-x_n\bigr|
 \ge\frac{1}{Q_n}.
\end{equation}
\end{lemma}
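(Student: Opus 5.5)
The plan is a purely arithmetic argument. By \cref{prop:finite-coding}, $y:=\beta^f(\mathbf{k})$ is a dyadic rational whose last binary $1$ sits at position $\nu(y)=w$. So $y=m/2^{w}$ with $m$ an odd integer (we only need $m\in\mathbb{Z}$). By \eqref{eq:Qn-xn}, $x_n=(2^{n-1}-1)/Q_n$. Multiplying out gives
\[
 2^w\bigl|y-x_n\bigr|
 =\left|m-\frac{2^w(2^{n-1}-1)}{Q_n}\right|
 =\frac{\bigl|mQ_n-2^w(2^{n-1}-1)\bigr|}{Q_n}.
\]
It therefore suffices to show that the integer $N:=mQ_n-2^w(2^{n-1}-1)$ is nonzero. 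Then $|N|\ge1$, and \eqref{eq:sep} follows immediately.

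Suppose instead that $N=0$. Then $y=x_n$, which contradicts the fact, noted before the lemma, that $x_n$ is non-dyadic: its reduced denominator is divisible by the odd number $Q_n>1$ (and $\gcd(2^{n-1}-1,Q_n)=1$, since $2(2^{n-1}-1)=Q_n-1$). Equivalently, $Q_n\mid 2^w(2^{n-1}-1)$ is impossible, because $Q_n$ is odd and coprime to $2^{n-1}-1$, so it would have to divide $1$.

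No real obstacle is expected. The only points needing care are that the normalizing exponent is exactly $w=\wt(\mathbf{k})$ (this is \cref{prop:finite-coding}), and that the nonvanishing of $N$ is justified via the coprimality just described.
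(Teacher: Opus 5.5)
Your proof is correct and follows the paper's route. You write $\beta^f(\mathbf{k})=m/2^w$, clear denominators to get $\bigl|mQ_n-2^w(2^{n-1}-1)\bigr|/Q_n$, and then show the integer numerator is nonzero. The only difference is that the paper gets nonvanishing by parity ($mQ_n$ is odd while $2^w(2^{n-1}-1)$ is even), whereas you use the coprimality of $Q_n$ with $2^w(2^{n-1}-1)$, i.e.\ that $x_n$ is non-dyadic; your version has the minor advantage of not needing $m$ to be odd.
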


\begin{proof}
By \cref{prop:finite-coding},
$\beta^f(\mathbf{k})=A/2^{w}$ for an odd integer $A$.  Hence
\[
 2^w\left|\frac{A}{2^{w}}-\frac{Q_n-1}{2Q_n}\right|
 =\frac{\left|AQ_n-(Q_n-1)2^{w-1}\right|}{Q_n}.
\]
The integer inside the absolute value is odd: $AQ_n$ is odd and the other term
is even.  Its absolute value is therefore at least $1$.
\end{proof}

\begin{proposition}
\label{prop:integer-lower-sequence}
Let $\mathbf{k}^{(j)}$ be any sequence of finite admissible indices whose
depths tend to infinity.  Then
\begin{equation}\label{eq:integer-sequence-lower}
 \liminf_{j\to+\infty}
 2^{\wt(\mathbf{k}^{(j)})}
 \left|n-\zeta^\star(\mathbf{k}^{(j)})\right|
 \ge n.
\end{equation}
\end{proposition}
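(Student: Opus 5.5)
The plan is to move the problem to the binary side through $\Phi$ and combine the arithmetic separation of \cref{lem:sep} with the derivative computation \eqref{eq:integer-derivative}. This mirrors the lower-bound half of \cref{prop:differentiable}. Write $w_j=\wt(\mathbf{k}^{(j)})$ and $y_j=\beta^f(\mathbf{k}^{(j)})\in\Dset_{r_j}$ with $r_j=\dep(\mathbf{k}^{(j)})$. By \cref{prop:finite-coding}, $\Phi(y_j)=\zeta^\star(\mathbf{k}^{(j)})$, and by \eqref{eq:integer-coded}, $\Phi(x_n)=n$.

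First, reduce to the bounded case. If the $\liminf$ in \eqref{eq:integer-sequence-lower} is $+\infty$ there is nothing to prove. Otherwise I pass to a subsequence along which $2^{w_j}|n-\zeta^\star(\mathbf{k}^{(j)})|$ converges to the $\liminf$, so it is bounded by some constant $M$. Since $w_j\ge r_j+1\to+\infty$, this gives $|n-\Phi(y_j)|\le M2^{-w_j}\to0$. The inverse of the strictly increasing continuous bijection $\Phi$ is continuous, so $y_j\to x_n$. Moreover $y_j\neq x_n$ for every $j$, because $y_j$ is dyadic and $x_n$ is not.

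Second, factor the normalized error:
\[
 2^{w_j}\bigl|n-\zeta^\star(\mathbf{k}^{(j)})\bigr|
 =\frac{|\Phi(y_j)-\Phi(x_n)|}{|y_j-x_n|}\cdot 2^{w_j}|y_j-x_n|
 \ge\frac{|\Phi(y_j)-\Phi(x_n)|}{|y_j-x_n|}\cdot\frac{1}{Q_n},
\]
where the inequality is \cref{lem:sep}. Since $y_j\to x_n$ and $\Phi$ is differentiable at $x_n$ with $\Phi'(x_n)=nQ_n$ by \eqref{eq:integer-derivative}, the difference quotient tends to $nQ_n$. The right-hand side therefore tends to $n$, which gives the $\liminf$ bound along the subsequence and hence along the original sequence.

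The main point needing care is that the difference quotient converges for arbitrary $y_j\to x_n$ from either side, not only along the special points $x_{n,a}>x_n$ used to compute the value. I would handle this by citing the two-sided differentiability at non-dyadic points (Theorem 1.7 of \cite{HMOInfinite}, already invoked just before \eqref{eq:integer-derivative}), which makes $\Phi'(x_n)$ a genuine two-sided limit. Alternatively, \cref{derivative} together with \cref{prop:der} gives differentiability at the non-dyadic point $x_n$ directly. The identification of the value $nQ_n$ then comes from the special sequence $x_{n,a}$.
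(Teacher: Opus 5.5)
Your proposal is correct and follows essentially the same route as the paper. It passes to a bounded subsequence and uses $w_j\ge r_j+1$ with continuity of $\Phi^{-1}$ to get $y_j\to x_n$, $y_j\ne x_n$. It then factors the error as the difference quotient, which tends to $\Phi'(x_n)=nQ_n$, times $2^{w_j}|y_j-x_n|\ge 1/Q_n$ from \cref{lem:sep}. Your remark that two-sided differentiability at the non-dyadic point $x_n$ (Theorem 1.7 of Hirose--Murahara--Onozuka) is what justifies arbitrary approach directions makes explicit a point the paper uses only implicitly.
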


\begin{proof}
Denote by
\[
 x_j=\beta^f(\mathbf{k}^{(j)}),
 \qquad
 w_j=\wt(\mathbf{k}^{(j)}),
 \qquad
 E_j=2^{w_j}|\Phi(x_j)-\Phi(x_n)|.
\]
If $\liminf E_j=+\infty$, the claim is immediate.  Otherwise pass to a
subsequence on which $E_j$ converges to its finite lower limit.  Since
$w_j\ge\dep(\mathbf{k}^{(j)})+1\to+\infty$, boundedness of $E_j$ gives
$\Phi(x_j)\to\Phi(x_n)$.  The inverse of the continuous increasing bijection
$\Phi$ is continuous, so $x_j\to x_n$.  Each $x_j$ is dyadic and $x_n$ is not;
hence $x_j\ne x_n$.  Using \eqref{eq:integer-derivative} and
\cref{lem:sep},
\begin{align*}
 \liminf_{j\to\infty}E_j
 &=\liminf_{j\to\infty}
 \frac{|\Phi(x_j)-\Phi(x_n)|}{|x_j-x_n|}
 2^{w_j}|x_j-x_n|\\
 &\ge nQ_n\cdot\frac{1}{Q_n}=n.
\end{align*}
\end{proof}

\begin{theorem}\label{thm:integer-values}
For every integer $n\ge2$,
\begin{equation}\label{eq:integer-values}
 \mathcal{N}(n)=n.
\end{equation}
\end{theorem}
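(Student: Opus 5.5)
The proof combines the two one-sided estimates already established just before the statement. For the upper bound, I would use the explicit candidates $\mathbf{K}_{n,a}=(\{\mathbf{b}_n\}^{a},1)$ from the cyclic sum formula \eqref{eq:cyclic-spec}. These have depth $a(n-1)+1\to+\infty$ by \eqref{eq:candidate-depth-weight}. By \eqref{eq:integer-upper},
\[
 \Delta_{a(n-1)+1}(n)\le n2^{an+1}\bigl(\zeta(an+1)-1\bigr).
\]
Since $\zeta(an+1)-1=2^{-an-1}+O(3^{-an-1})$, the right-hand side tends to $n$. Thus along the subsequence of depths $r_a=a(n-1)+1$ the values $\Delta_{r_a}(n)$ have $\limsup$ at most $n$. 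Because $\mathcal{N}(n)$ is a $\liminf$ over all depths, this gives $\mathcal{N}(n)\le n$.

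For the lower bound, I would pick, for each depth $r$, an index $\mathbf{k}^{(r)}\in\Sset_r$ nearly attaining the infimum:
\[
 2^{\wt(\mathbf{k}^{(r)})}\bigl|n-\zeta^\star(\mathbf{k}^{(r)})\bigr|\le\Delta_r(n)+\frac1r .
\]
If $\Delta_r(n)=+\infty$ the choice is irrelevant, though in fact $\Delta_r(n)$ is finite, since $\Sset_r$ is nonempty. Now take a sequence of depths $r_j\to+\infty$ with $\Delta_{r_j}(n)\to\mathcal{N}(n)$. The indices $\mathbf{k}^{(r_j)}$ have depths tending to infinity, so \cref{prop:integer-lower-sequence} applies and yields
\[
 \mathcal{N}(n)=\lim_j\Bigl(\Delta_{r_j}(n)+\tfrac1{r_j}\Bigr)\ge\liminf_j 2^{\wt(\mathbf{k}^{(r_j)})}\bigl|n-\zeta^\star(\mathbf{k}^{(r_j)})\bigr|\ge n.
\]

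Combining the two bounds gives $\mathcal{N}(n)=n$. All the substantive work (the derivative value $\Phi'(x_n)=nQ_n$ from \eqref{eq:integer-derivative} and the parity separation of \cref{lem:sep}) is already contained in \cref{prop:integer-lower-sequence}. The only point needing care is the bookkeeping that passes from an infimum at each depth to a specific sequence of indices, which the $1/r$ slack handles.
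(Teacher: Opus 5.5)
Your proposal is correct and follows the paper's proof essentially verbatim. The upper bound comes from the cyclic-sum candidates $\mathbf{K}_{n,a}$ via \eqref{eq:integer-upper}, and the lower bound from choosing near-minimizers along depths $r_j$ realizing the lower limit and then invoking \cref{prop:integer-lower-sequence}.
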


\begin{proof}
The upper bound follows from \eqref{eq:integer-upper} and in particular shows
that $\mathcal{N}(n)<+\infty$.  For the reverse bound, choose depths
$r_j\to+\infty$ with
$\Delta_{r_j}(n)\to\mathcal{N}(n)$ and choose
$\mathbf{k}^{(j)}\in\Sset_{r_j}$ such that
\[
 2^{\wt(\mathbf{k}^{(j)})}
 \bigl|n-\zeta^\star(\mathbf{k}^{(j)})\bigr|
 \le\Delta_{r_j}(n)+\frac1j.
\]
Then \cref{prop:integer-lower-sequence} gives
$n\le\mathcal{N}(n)$.  Combining the two inequalities proves
\eqref{eq:integer-values}.
\end{proof}

The upper-bound sequence saturates the dyadic obstruction exactly:
\[
 2^{\wt(\mathbf{K}_{n,a})}(x_{n,a}-x_n)=\frac{1}{Q_n}.
\]
The derivative $nQ_n$ converts this coordinate-scale constant into the
normalized value $n$.

\section{The closure of the image}\label{sec:image}

\subsection{Derivative formulas  and related upper bounds}

Set
\begin{equation}\label{eq:As}
 A_s:=\sum_{n_1\ge\cdots\ge n_s\ge3}
 \frac{1}{n_1^3n_2\cdots n_s},
 \qquad s\ge1.
\end{equation}
\begin{lemma}\label{estimate}
There is an absolute constant $C_A>0$ such that, for every $s\ge1$,
\begin{equation}\label{eq:As-estimate}
 0\le2^{-(s+1)}-A_s\le C_A\frac{s}{3^s}.
\end{equation}
\end{lemma}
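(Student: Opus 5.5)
We need to bound $A_s=\sum_{n_1\ge\cdots\ge n_s\ge3} n_1^{-3}(n_2\cdots n_s)^{-1}$ by $2^{-(s+1)}$ from above and control the deficit by $O(s3^{-s})$.

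The natural plan is a two-sided comparison with the telescoping structure already used in \cref{lem:linear-cylinder}. There, the observation $\sum_{n\ge N}\frac{1}{n(n-1)}=\frac1{N-1}$ together with the recursion $F_j(N)=\sum_{m\ge N}F_{j-1}(m)/m$ gave bounds of the form $\frac{1}{N-1}$. Here, however, $2^{-(s+1)}$ is tiny while $A_s$ is a sum over $n\ge3$. So the correct reading must come from grouping by the \emph{last} variable $n_s=N$. I would write
\[
A_s=\sum_{N\ge3}\frac{G_{s-1}(N)}{N},\qquad G_0(N)=\frac{1}{N^2}\ (\text{so }n_1^3\text{ with } s=1\text{ gives }N^{-3}),
\]
and more precisely set $G_j(N)=\sum_{n_1\ge\cdots\ge n_j\ge N} n_1^{-3}(n_2\cdots n_j)^{-1}$. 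This gives $A_s=G_s(3)$ with $G_{j}(N)=\sum_{m\ge N}G_{j-1}(m)/m$ and $G_1(N)=\sum_{m\ge N}m^{-3}$.

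A cleaner route is to reverse the nesting, putting the constraint ordering innermost: $A_s=\sum_{n_s\ge3}\frac1{n_s}\sum_{n_{s-1}\ge n_s}\cdots$. The key point is that the dominant contribution comes from the smallest admissible values, $n_2=\cdots=n_s=3$ roughly. Since $\sum n^{-3}$ near $3$ gives $\approx 1/27$ rather than powers of $2$, this approach would produce $3^{-s}$, not $2^{-(s+1)}$. So the identity behind \eqref{eq:As-estimate} must be an exact evaluation. I expect that
\[
\sum_{n_1\ge\cdots\ge n_s\ge 2}\frac{1}{n_1^3n_2\cdots n_s}
\]
relates to $\zeta^\star(3,\{1\}^{s-1})$ restricted to $n\ge2$, and that $A_s$ equals $2^{-(s+1)}$ minus the terms with some $n_i$ small.

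Concretely, I would apply \cref{prop:der}/\eqref{eq:su3}-type bookkeeping to the series \eqref{eq:Lnorm-series}. Take $\Lnorm(2,\{1\}^{s-1})$ and $\Lnorm(3,\{1\}^{s-1})$, or equivalently use \cref{lem:natural-errors} with $\mathbf{k}=(2)$, where $\mathbf p=(3,\{1\}^\infty)$. There $D_{s-1}=\zeta(2)-\zeta^\star(3,\{1\}^{s-1})$, and expanding $(n-1)$ as in \eqref{eq:Dd-ser} should express $2^{-(s+1)}-A_s$ as a nonnegative series. This series is dominated by the terms where the minimal variable equals $3$ in the "$\ge3$" range, giving the factor $3^{-s}$ and the polynomial factor $s$ from the number of positions where the chain of variables can drop.

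So the steps are as follows. First, find an exact identity $2^{-(s+1)}=\sum(\cdots)$, presumably a telescoping sum over $n\ge2$ whose $n\ge3$ part is $A_s$ and whose remainder is positive. Second, prove nonnegativity of the difference. Third, bound the remainder by isolating one variable equal to $3$ at each of $s$ possible positions, with each such term bounded by $C3^{-s}$.

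The main obstacle is the first step, namely finding the exact identity giving $2^{-(s+1)}$. I would first test $s=1$: is $\sum_{n\ge3}n^{-3}\le\frac14$? Here $\zeta(3)-1-\frac18\approx0.077$, which is consistent but far from tight, so the "$3^{-s}$" error must be meaningful only asymptotically. If no identity emerges, I would fall back on the recurrence $G_j(N)=\sum_{m\ge N}G_{j-1}(m)/m$. I would prove inductively a bound of the form $G_j(N)\le c\,\binom{j}{\cdot}\ldots$ and compare it with the generating function $\sum_s A_s z^s=\sum_{n_1\ge3}n_1^{-3}z\prod_{3\le m\le n_1}(1-z/m)^{-1}$-type expressions. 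Evaluating this at $z$ near $2$ would identify the $2^{-(s+1)}$ main term through a simple pole and the $s3^{-s}$ error through a double pole at $z=3$.
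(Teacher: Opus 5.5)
Your proposal does not yet contain a proof. The step you yourself call the main obstacle, an exact expression for $2^{-(s+1)}$, is never found, and the forms you guess for it are wrong. The missing idea is to change only the top factor:
\[
 2^{-(s+1)}=\sum_{n_1\ge\cdots\ge n_s\ge3}\frac{1}{n_1(n_1-1)(n_1-2)\,n_2\cdots n_s}.
\]
This is the telescoping you recalled from \cref{lem:linear-cylinder}, one level up. There $\frac{1}{N-1}$ is fixed by $f\mapsto\sum_{m\ge N}f(m)/m$. Here $\frac{1}{(N-1)(N-2)}$ is an eigenfunction of the same map with eigenvalue $\tfrac12$, because $\frac{1}{m(m-1)(m-2)}=\tfrac12\bigl(\frac{1}{(m-1)(m-2)}-\frac{1}{m(m-1)}\bigr)$. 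Your last-variable recursion then gives $2^{-s}/((N-1)(N-2))$, i.e.\ $2^{-(s+1)}$ at $N=3$.

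The paper proves the same identity as $\sum_{n\ge3}P_n(z)/(n(n-1)(n-2))=\frac{1}{2(2-z)}$, with $P_n(z)=\prod_{m=3}^{n}(1-z/m)^{-1}$, via Beta integrals. Your fallback series is exactly its $zF(z)$, but $F$ itself has no convenient closed form; the key is the termwise comparison above. With it, $\delta_n=\frac{1}{n(n-1)(n-2)}-\frac{1}{n^3}\ge0$ and positivity of all chain weights give $2^{-(s+1)}-A_s\ge0$ at once. For the upper bound, combine three ingredients: $\delta_n\le\frac{3}{n(n-1)(n-2)(n-3)}$ for $n\ge4$; the eigenvalue-$\tfrac13$ eigenfunction $\frac{1}{(N-1)(N-2)(N-3)}$ (the paper's $H(z)=\frac{1}{2(3-z)^2}$); and the single term $\delta_3\cdot3^{1-s}=\tfrac{7}{18}3^{-s}$ from $n_1=3$. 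This gives $C_A=8/9$. Your intuition about the factor $s$ is right in this form: splitting the $n_1\ge4$ part by where the chain drops to $3$ gives $s$ contributions of at most $\tfrac12\cdot3^{-s}$ each.

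The routes you actually propose would not get there.
\begin{itemize}
\item Any identity ``$2^{-(s+1)}=A_s+(\text{terms of }\zeta^\star(3,\ones{s-1})\text{ with some }n_i=2)$'' fails. Already for $s=1$ it reads $\tfrac14=\zeta(3)-1$, which is false. For large $s$ the single term $n_1=\cdots=n_s=2$ contributes $2^{-(s+2)}$, far more than the deficit $C_As3^{-s}$.
\item The \cref{lem:natural-errors} route with $\mathbf{k}=(2)$ cannot isolate $2^{-(s+1)}$ either, since there $2^{s+2}D_{s-1}=B_{s-1}\to\Lnorm(2)=+\infty$.
\item Your heuristic that the dominant mass sits at $n_2=\cdots=n_s=3$ is backwards. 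The main term comes from large $n_1$ (the divergence of $\sum_nP_n(z)n^{-3}$ at $z=2$); small values produce only the $3^{-s}$-size correction.
\item Pure singularity analysis of $\sum_sA_sz^s$ would require you to prove meromorphic continuation past $|z|=2$ and to identify the pole structure at $z=3$. Even then it gives only $A_s=2^{-(s+1)}+O(s3^{-s})$ asymptotically. It does not give the sign $2^{-(s+1)}-A_s\ge0$ for every $s\ge1$, which is part of the statement and is used later as $2^{r+2}A_r\le2$ in \cref{lem:zero-count}.
\end{itemize}
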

\begin{proof} 
For $n\ge3$, put
\[
 P_n(z):=\prod_{m=3}^n\left(1-\frac{z}{m}\right)^{-1}.
\]
Expanding every factor as a geometric series shows that
\[
 [z^{s-1}]P_n(z)
 =\sum_{n\ge n_2\ge\cdots\ge n_s\ge3}
   \frac{1}{n_2\cdots n_s}.
\]
Consequently, for $|z|<1$,
\begin{equation}\label{eq:F-def}
 F(z):=\sum_{s\ge1}A_s z^{s-1}
 =\sum_{n=3}^{\infty}\frac{P_n(z)}{n^3}.
\end{equation}
The convergence is locally uniform in $|z|<1$.

We compare this with
\[
 G(z):=\sum_{n=3}^{\infty}
 \frac{P_n(z)}{n(n-1)(n-2)}.
\]
Using the gamma and beta functions, we have
\[
 P_n(z)
 =\frac{\Gamma(n+1)\Gamma(3-z)}{2\Gamma(n+1-z)}
\]
and hence
\[
 \frac{P_n(z)}{n(n-1)(n-2)}
 =\frac{1}{2}B(n-2,3-z).
\]
Thus, for $0<z<1$, we have
\begin{align*}
 G(z)
 &=\frac12\sum_{n=3}^{\infty}
   \int_0^1 x^{n-3}(1-x)^{2-z}\,dx \\
 &=\frac12\int_0^1(1-x)^{2-z}
   \sum_{n=3}^{\infty}x^{n-3}\,dx \\
 &=\frac12\int_0^1(1-x)^{1-z}\,dx
 =\frac{1}{2(2-z)}.
\end{align*}
Therefore
\begin{equation}\label{eq:G-coeff}
 [z^{s-1}]G(z)=2^{-(s+1)}.
\end{equation}

Let
\[
 \delta_n:=\frac{1}{n(n-1)(n-2)}-\frac{1}{n^3}
 =\frac{3n-2}{n^3(n-1)(n-2)}.
\]
Since $\delta_n\ge0$ and every coefficient of $P_n(z)$ is nonnegative,
\eqref{eq:F-def} and \eqref{eq:G-coeff} imply
\[
 2^{-(s+1)}-A_s
 =[z^{s-1}]\sum_{n=3}^{\infty}\delta_nP_n(z)\ge0.
\]

It remains to estimate this difference. For $n\ge4$,
\begin{equation}\label{eq:delta-bound}
 \delta_n\le
 \frac{3}{n(n-1)(n-2)(n-3)},
\end{equation}
indeed, after multiplication by the positive denominator,
\eqref{eq:delta-bound} is equivalent to
$(3n-2)(n-3)\le3n^2$.
Define
\[
 H(z):=\sum_{n=4}^{\infty}
 \frac{P_n(z)}{n(n-1)(n-2)(n-3)}.
\]
As above,
\[
 \frac{P_n(z)}{n(n-1)(n-2)(n-3)}
 =\frac{1}{2(3-z)}\,B(n-3,4-z),
\]
so, for $\operatorname{Re}z<3$,
\begin{align*}
 H(z)
 &=\frac{1}{2(3-z)}\sum_{n=4}^{\infty}
   \int_0^1x^{n-4}(1-x)^{3-z}\,dx \\
 &=\frac{1}{2(3-z)}\int_0^1(1-x)^{2-z}\,dx
 =\frac{1}{2(3-z)^2}.
\end{align*}
In particular,
\begin{equation}\label{eq:H-coeff}
 [z^{s-1}]H(z)=\frac{s}{2\,3^{s+1}}.
\end{equation}
For the remaining term $n=3$, we have
\[
 \delta_3=\frac{7}{54},
 \qquad
 P_3(z)=\left(1-\frac z3\right)^{-1},
\]
and therefore
\begin{equation}\label{eq:n3-coeff}
 [z^{s-1}]\bigl(\delta_3P_3(z)\bigr)
 =\frac{7}{18}\frac{1}{3^s}.
\end{equation}
Combining \eqref{eq:delta-bound}--\eqref{eq:n3-coeff}, we obtain
\begin{align*}
 2^{-(s+1)}-A_s
 &\le \frac{7}{18}\frac{1}{3^s}
   +3\,[z^{s-1}]H(z) \\
 &=\frac{7}{18}\frac{1}{3^s}
   +\frac12\frac{s}{3^s} \\
 &\le \frac89\frac{s}{3^s},
\end{align*}
where the last inequality uses $s\ge1$. This proves the lemma with
$C_A=8/9$.  
\end{proof}

\begin{lemma}\label{lem:zero-count}
For $k_1\geq 3, k_2,\cdots,k_r\geq 1$, one has 
\begin{equation}\label{eq:zero-count}
0\leq \sum_{m_1\geq \cdots \geq m_r\geq 3}      \left(\frac{2}{m_1}\right)^{k_1}\cdots  \left(\frac{2}{m_r}\right)^{k_r}< 5\left( \frac{2}{3}\right)^{k_1+\cdots+k_r-r}.
\end{equation}
\end{lemma}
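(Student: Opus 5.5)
The plan is to bound each factor $(2/m_i)^{k_i}$ by peeling off the excess exponent as a power of $2/3$, using $m_i\ge3$. After that, the remaining chain sum is exactly a multiple of the quantity $A_r$ from \eqref{eq:As}, and \Cref{estimate} controls it. The lower bound $0\le$ is immediate, since every term is positive.

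\textbf{Step 1 (peeling).} For $m\ge3$ and $j\ge0$ we have $(2/m)^{j}\le(2/3)^{j}$. The first entry has $k_1\ge3$, and the remaining entries have $k_i\ge1$ for $i\ge2$. This gives
\[
 \Bigl(\frac{2}{m_1}\Bigr)^{k_1}\le\Bigl(\frac23\Bigr)^{k_1-3}\Bigl(\frac{2}{m_1}\Bigr)^{3},
 \qquad
 \Bigl(\frac{2}{m_i}\Bigr)^{k_i}\le\Bigl(\frac23\Bigr)^{k_i-1}\frac{2}{m_i}\quad(i\ge2).
\]
Multiplying these bounds, every term of the sum in \eqref{eq:zero-count} is at most
\[
 \Bigl(\frac23\Bigr)^{K-r-2}\cdot\frac{2^{r+2}}{m_1^{3}m_2\cdots m_r},
 \qquad K=k_1+\cdots+k_r.
\]

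\textbf{Step 2 (summing).} Summing over $m_1\ge\cdots\ge m_r\ge3$ bounds the whole sum by $(2/3)^{K-r-2}\,2^{r+2}A_r$. Here $A_r$ is exactly the quantity in \eqref{eq:As}, with $s=r$. By \Cref{estimate}, $A_r\le2^{-(r+1)}$, so $2^{r+2}A_r\le2$.

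\textbf{Step 3 (conclusion).} The sum is therefore at most
\[
 2\Bigl(\frac23\Bigr)^{K-r-2}=\frac{9}{2}\Bigl(\frac23\Bigr)^{K-r}<5\Bigl(\frac23\Bigr)^{K-r}.
\]
The strict inequality holds because $9/2<5$ and the right-hand side is positive.

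The case $r=1$ fits the same scheme with $A_1=\sum_{m\ge3}m^{-3}$. The only delicate point is that the first entry must keep exponent $3$ rather than $1$; otherwise the chain sum diverges, and this is precisely why the hypothesis $k_1\ge3$ is needed. Once that choice is made, \Cref{estimate} does all the work, so I expect no real obstacle.
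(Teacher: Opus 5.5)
Your proposal is correct and follows essentially the same route as the paper. The paper also peels off the excess exponents as powers of $2/3$, keeping exponent $3$ on the first entry, then bounds the remaining chain sum by $2^{r+2}A_r\le 2$ via \Cref{estimate}, arriving at the same $\tfrac92\bigl(\tfrac23\bigr)^{K-r}<5\bigl(\tfrac23\bigr)^{K-r}$.
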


\begin{proof}
For  $k_1\geq 3, k_2,\cdots,k_r\geq 1$ and 
\[
m_1\geq \cdots \geq m_r\geq 3,\]
it is clear that
\[
\left(   \frac{2}{m_1} \right)^{k_1}\leq \left(\frac{2}{3}   \right)^{k_1-3}\left(   \frac{2}{m_1} \right)^{3}\]
and 
\[
\left(   \frac{2}{m_i} \right)^{k_i}\leq \left(\frac{2}{3}   \right)^{k_i-1}\left(   \frac{2}{m_i} \right),\quad  2\leq i\leq r.\]
Hence
\[
 \left(\frac{2}{m_1}\right)^{k_1}\cdots  \left(\frac{2}{m_r}\right)^{k_r}\leq \left(\frac{2}{3}\right)^{k_1+\cdots+k_r-r-2       } \left(   \frac{2}{m_1} \right)^{3} \prod_{i=2}^r\left(   \frac{2}{m_i} \right).
 \]
 By \Cref{estimate},  
 \[
   \sum_{  m_1\geq \cdots \geq m_r\geq 3    }   \left(   \frac{2}{m_1} \right)^{3} \prod_{i=2}^r\left(   \frac{2}{m_i} \right) =2^{r+2} \sum_{  m_1\geq \cdots \geq m_r\geq 3    }  \frac{1}{m_1^3m_2\cdots m_r} \leq 2. \]
   Therefore 
   \[
 \sum_{m_1\geq \cdots \geq m_r\geq 3}      \left(\frac{2}{m_1}\right)^{k_1}\cdots  \left(\frac{2}{m_r}\right)^{k_r}\leq  \frac{9}{2}\left( \frac{2}{3}\right)^{k_1+\cdots+k_r-r}<5 \left( \frac{2}{3}\right)^{k_1+\cdots+k_r-r}.\]
\end{proof}

For $q\ge2$, define
\begin{equation}\label{eq:xq-Sq}
 x_q:=\frac{1}{4}-2^{-q-2}=\sum_{j=2}^{q+1}2^{-j-1},
 \qquad
 S_q:=\partial_+\Phi(x_q).
\end{equation}

\begin{proposition}\label{prop:linear-slopes}
There is a real constant $c_0$ such that
\begin{equation}\label{eq:Sq-asymptotic}
 S_q=2q+c_0+O\!\left(q\left(\frac23\right)^q\right).
\end{equation}
In particular, $S_q=2q+O(1)$.
\end{proposition}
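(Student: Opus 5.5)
The plan is to identify $x_q$ as the finite dyadic coordinate of an explicit index, and read off $S_q$ from the right-derivative formula \eqref{eq:fi}. The binary expansion of $x_q=\sum_{j=2}^{q+1}2^{-j-1}$ has its ones exactly at positions $3,4,\ldots,q+2$. By \cref{prop:finite-coding}, $x_q=\beta^f(\mathbf{k})$ with $K_1=3$ and $K_j=j+2$, that is,
\[
 \mathbf{k}=(3,\ones{q-1}),
\]
of depth $q$. Then \eqref{eq:fi} of \cref{prop:der} gives
\[
 S_q=\sum_{n_1\ge\cdots\ge n_q\ge2}\left(\frac{2}{n_1}\right)^{3}\prod_{i=2}^{q}\frac{2}{n_i}.
\]

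Next I will decompose this sum by the number $s$ of variables that are $\ge3$, exactly as in \eqref{eq:r2}. Each factor with $n_i=2$ equals $1$. Since the variables are nonincreasing, the ones that are $\ge3$ form an initial block $n_1,\ldots,n_s$. Therefore
\[
 S_q=1+\sum_{s=1}^{q}\sum_{m_1\ge\cdots\ge m_s\ge3}\left(\frac{2}{m_1}\right)^3\prod_{i=2}^{s}\frac{2}{m_i}
 =1+\sum_{s=1}^{q}2^{s+2}A_s,
\]
with $A_s$ as in \eqref{eq:As}.

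Now \cref{estimate} lets me write $2^{s+2}A_s=2-\varepsilon_s$, where
\[
 0\le\varepsilon_s=2^{s+2}\bigl(2^{-(s+1)}-A_s\bigr)\le4C_A\,s\left(\frac23\right)^s .
\]
Hence $S_q=1+2q-\sum_{s=1}^{q}\varepsilon_s$. The series $\sum_{s\ge1}\varepsilon_s$ converges because $\sum_s s(2/3)^s<\infty$. Set
\[
 c_0:=1-\sum_{s\ge1}\varepsilon_s .
\]
Then
\[
 S_q-2q-c_0=\sum_{s>q}\varepsilon_s\le4C_A\sum_{s>q}s\left(\frac23\right)^s=O\!\left(q\left(\frac23\right)^q\right).
\]
This tail bound follows by summing the arithmetico-geometric series, or simply from $s(2/3)^s$ decaying geometrically after index $q$. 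This yields \eqref{eq:Sq-asymptotic}, and in particular $S_q=2q+O(1)$.

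The only delicate point is the first step: confirming that \eqref{eq:fi} applies to the dyadic point $x_q$, with the index read correctly. The weight must be $q+2$ (the last one is at position $q+2$) and the depth must be $q$. Everything after that is the exact grouping identity together with the quantitative bound of \cref{estimate}, which is precisely what supplies both the constant $2$ per level and the $s/3^s$ error.
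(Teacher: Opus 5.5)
Your proposal is correct and follows the paper's proof essentially step by step: you identify $x_q=\beta^f(3,\ones{q-1})$, apply \eqref{eq:fi}, and group by the number of variables that are at least $3$ to get $S_q=1+\sum_{s=1}^{q}2^{s+2}A_s$; you then invoke \cref{estimate}, and your constant $c_0=1-\sum_{s\ge1}\varepsilon_s$ is the paper's $c_0$. Your explicit tail bound $\sum_{s>q}\varepsilon_s=O\bigl(q(2/3)^q\bigr)$ only spells out a step the paper leaves implicit.
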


\begin{proof}
By \eqref{eq:fi}, we have 
\[
\begin{split}
 &\;\;\;\; S_q\\
 &=\partial_+\Phi(x_q)\\
 &=\sum_{n_1\geq \cdots \geq n_q\geq 2}\left(\frac{2}{n_1}   \right)^{3}\left(\frac{2}{n_2}   \right)  \cdots   \left(\frac{2}{n_q}   \right)\\
 &= \left(  \sum_{\substack{n_1\geq \cdots \geq n_q\geq 2\\ n_1=\cdots=n_q=2      }} +  \sum_{\substack{n_1\geq \cdots \geq n_q\geq 2\\ n_1\geq 3,n_2=\cdots= n_q=2      }}+\cdots         + \sum_{\substack{n_1\geq \cdots \geq n_q\geq 2\\ n_1\geq \cdots\geq n_q\geq 3     }}    \right)   \left(\frac{2}{n_1}   \right)^{3}\left(\frac{2}{n_2}   \right)  \cdots   \left(\frac{2}{n_q}   \right)   \\
 &=1+\sum_{n_1\geq 3} \left( \frac{2}{n_1}    \right)^3+\cdots+ \sum_{n_1\geq \cdots\geq n_{q-1}\geq 3} \left( \frac{2}{n_1}    \right)^3\left(\frac{2}{n_2}   \right)  \cdots   \left(\frac{2}{n_{q-1}}   \right) \\
 &\;\;\;\; + \sum_{n_1\geq \cdots\geq n_{q}\geq 3} \left( \frac{2}{n_1}    \right)^3\left(\frac{2}{n_2}   \right)  \cdots   \left(\frac{2}{n_q}   \right)\\
 &=1+2^3\sum_{n_1\geq 3}  \frac{1}{n_1^3}+\cdots+ 2^{q+1} \sum_{n_1\geq \cdots\geq n_{q-1}\geq 3} \frac{1}{n_1^3n_2\cdots n_{q-1}} +2^{q+2}   \sum_{n_1\geq \cdots\geq n_{q}\geq 3} \frac{1}{n_1^3n_2\cdots n_q}.\\    \end{split}
\]
From \Cref{estimate}, it follows that
\[
S_q=1+\sum_{s=1}^q2^{s+2} A_s=2q+1+\sum_{s=1}^q 2\left( 2^{s+1}A_s-1\right)=2q+c_0+O\left(q\left( \frac{2}{3} \right)^q  \right).
\]
Here $$c_0=1+\sum_{s=1}^{+\infty} 2\left( 2^{s+1}A_s-1    \right).$$
\end{proof}

\subsection{Periodic tails and dense image}

For $m\ge2$, let
\begin{equation}\label{eq:theta-m}
 \theta_m:=\frac{1}{2^m-1}.
\end{equation}
\begin{lemma}\label{lem:theta-lambda}
For every $m\ge2$,
\begin{equation}\label{eq:theta-lambda}
 \Lambda(\theta_m)=\frac{1}{2^m-1}.
\end{equation}
\end{lemma}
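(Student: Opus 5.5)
We compute $\Lambda(\theta_m)$ through \cref{lem:orbit-formula}. The number $\theta_m=1/(2^m-1)$ has odd denominator greater than $1$, so it is non-dyadic. It also lies in $(0,\tfrac12)$ because $m\ge2$. The lemma therefore gives
\[
 \Lambda(\theta_m)=\liminf_{n\to\infty}\|2^n\theta_m\|,
\]
and it remains to evaluate this lower limit of the doubling orbit.

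The orbit is purely periodic. Indeed, $2^m\theta_m=1+\theta_m$, so $2^{n+m}\theta_m\equiv 2^n\theta_m\pmod 1$. Hence the lower limit is just the minimum of $\|2^n\theta_m\|$ over one period $0\le n\le m-1$. For such $n$, the fractional part of $2^n\theta_m$ equals $2^n/(2^m-1)$, since $2^n<2^m-1$ when $n\le m-1$ and $m\ge2$. Its distance to the nearest integer is
\[
 \min\Bigl\{\frac{2^n}{2^m-1},\,1-\frac{2^n}{2^m-1}\Bigr\}.
\]

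We now take the minimum over $n$. For $n=0$ the value is $\frac{1}{2^m-1}$, because $\frac{1}{2^m-1}\le\frac12$. For $1\le n\le m-1$, the first term is at least $\frac{2}{2^m-1}$. The second term is $\frac{2^m-1-2^n}{2^m-1}\ge\frac{2^{m-1}-1}{2^m-1}$, and this is at least $\frac1{2^m-1}$ as soon as $m\ge2$. When $m=2$, $n=1$, the second term equals exactly $\frac13$, which still gives the same minimum. So the minimum over the period is $\frac1{2^m-1}$, and this proves \eqref{eq:theta-lambda}.

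The only step needing care is this last comparison. One must check that no shifted orbit point comes closer than $\theta_m$ to an integer from above, i.e.\ that $2^n\le 2^m-2$ for $n\le m-1$. This holds since $2^{m-1}\le 2^m-2$ for $m\ge2$. Everything else follows directly from \cref{lem:orbit-formula} and periodicity.
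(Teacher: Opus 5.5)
Your proof is correct and follows the same route as the paper: apply \cref{lem:orbit-formula} to the non-dyadic point $\theta_m\in(0,\tfrac12)$, note that the doubling orbit modulo $1$ is periodic with residues $2^j/(2^m-1)$ for $0\le j\le m-1$, and conclude that the smallest distance to an integer is $1/(2^m-1)$. You additionally write out the checks the paper leaves implicit, namely non-dyadicity, the fractional parts over one period, and the comparison $2^n\le 2^m-2$ that excludes a closer approach from above.
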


\begin{proof}
Modulo $1$, the doubling orbit of $\theta_m$ is periodic and consists of the
residues $2^j/(2^m-1)$, $0\le j\le m-1$.  Its smallest distance to an integer
is $1/(2^m-1)$.  Apply \cref{lem:orbit-formula}.
\end{proof}

For $q,m\ge2$ and $L\ge1$, put
\begin{equation}\label{eq:yqml}
 y_{q,m,L}:=x_q+2^{-(q+2+L)}\theta_m.
\end{equation}
Its binary expansion consists of the terminating prefix of $x_q$, followed
by exactly $L$ zero digits and then the complete periodic binary expansion of
$\theta_m$.

\begin{proposition}
\label{prop:pf}
For fixed $q,m\ge2$,
\begin{align}
 \Lambda(y_{q,m,L})&=\frac{1}{2^m-1},\label{eq:y-lambda}\\
 \Phi'(y_{q,m,L})&\longrightarrow S_q
 \qquad(L\to+\infty).\label{eq:y-derivative}
\end{align}
Consequently,
\begin{equation}\label{eq:grid-in-closure}
 \frac{S_q}{2^m-1}
 \in\overline{\{\mathcal{N}(\alpha):\alpha>1,
 \ \mathcal{N}(\alpha)<+\infty\}}.
\end{equation}
\end{proposition}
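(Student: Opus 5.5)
The proof has three parts. First we compute $\Lambda$ by prefix invariance. Then we get the derivative limit from the explicit series \eqref{eq:su3}. Finally we combine the two through \cref{prop:differentiable}.

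\emph{Computing $\Lambda$.} Write $y_{q,m,L}=P(\theta_m)$ with $P(x)=(A+x)/2^{q+2+L}$, where $A=2^{q+2+L}x_q$. This $A$ is an integer because $x_q$ has last binary digit at position $q+2$, and we have $0\le A<2^{q+2+L}$. Moreover $y_{q,m,L}<x_q+2^{-(q+2)}=\tfrac14<\tfrac12$. Since $\theta_m$ has odd denominator $2^m-1>1$, it is non-dyadic. Hence \cref{cor:inva} and \cref{lem:theta-lambda} give \eqref{eq:y-lambda}. The point $y_{q,m,L}$ is itself non-dyadic, so \cref{derivative} gives $0<\Phi'(y_{q,m,L})<+\infty$.

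\emph{The derivative limit \eqref{eq:y-derivative}.} Let $\mathbf{k}^{(L)}=\beta^{-1}(y_{q,m,L})$ be the coding sequence. The binary digits of $x_q$ are ones at positions $3,\dots,q+2$, so $\mathbf{k}^{(L)}$ begins with $(3,\{1\}^{q-1})$. After that comes a digit of size $L+m$, namely the jump to the first one of $2^{-(q+2+L)}\theta_m$, followed by the periodic pattern $(m,m,\ldots)$.

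By \eqref{eq:su3},
\[
\Phi'(y_{q,m,L})=1+\sum_{r\ge1}\sum_{n_1\ge\cdots\ge n_r\ge3}\prod_{i\le r}(2/n_i)^{k^{(L)}_i}.
\]
The terms with $r\le q$ do not depend on $L$. Their sum is exactly $S_q$, by the computation in the proof of \cref{prop:linear-slopes}.

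It remains to show that the tail $r\ge q+1$ tends to $0$ as $L\to\infty$. Every such term contains the factor $(2/n_{q+1})^{L+m}\le(2/3)^{L+m-1}(2/n_{q+1})$. The remaining dependence is on the fixed tail $(m,m,\ldots)$, so we may bound the tail by $(2/3)^{L+m-1}$ times the same series with $k_{q+1}$ replaced by $1$. That series is the tail of $\Phi'$ at a fixed non-dyadic point $y^\ast$ coded by $(3,\{1\}^{q-1},1,m,m,\ldots)$. Its derivative is finite by \cref{derivative}. Alternatively, one can argue directly with \cref{lem:zero-count}: the total exponent minus depth grows linearly along the tail, because $m\ge2$, and this gives geometric decay.

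The main obstacle I expect is making this domination uniform in $L$. The bound above handles it, since the only $L$-dependence sits in a single exponent, and that exponent contributes the factor $(2/3)^{L+m-1}\to0$.

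\emph{Conclusion.} By \cref{prop:differentiable}, applied at the non-dyadic points $y_{q,m,L}$,
\[
\mathcal N(\Phi(y_{q,m,L}))=\Phi'(y_{q,m,L})\,\Lambda(y_{q,m,L})=\Phi'(y_{q,m,L})/(2^m-1).
\]
This value is finite. As $L\to\infty$ it converges to $S_q/(2^m-1)$, which proves \eqref{eq:grid-in-closure}.
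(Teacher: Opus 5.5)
Your proof is correct and follows the paper's argument: prefix invariance (\cref{cor:inva} with \cref{lem:theta-lambda}) for $\Lambda$, splitting the series \eqref{eq:su3} at depth $q$ so the first $q$ terms give exactly $S_q$ and the tail vanishes, then \cref{prop:differentiable}. The only variation is in the tail estimate: you factor $(2/3)^{L+m-1}$ out of the single $L$-dependent exponent and dominate by the tail of the series at the fixed non-dyadic point coded by $(3,\ones{q-1},1,m,m,\ldots)$, which is finite by \cref{derivative}; the paper instead gets an explicit geometric bound from \cref{lem:zero-count} using $K_r-r=L+2+j(m-1)$, the alternative you also mention.
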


\begin{proof}
The number $y_{q,m,L}$ is obtained from $\theta_m$ by adding a finite binary
prefix, so \cref{cor:inva,lem:theta-lambda} give
\eqref{eq:y-lambda}.
The derivative formula applies because $y_{q,m,L}$ has infinitely many zeros
and ones. For 
\[
y_{q,m,L}:=x_q+2^{-(q+2+L)}\theta_m,\quad \theta_m=\left(2^m-1\right)^{-1},
\]
it is clear that 
\[
y_{q,m,L}=\beta\left(3,\{1\}^{q-1},L+m,\{m\}^{\infty}   \right).
\]
Consequently, if $r=q+j$, then
\[
K_r-r=L+2+j(m-1).
\]
By
\Cref{lem:zero-count}, therefore
\[
 0\le\Phi'(y_{q,m,L})-S_q
 \le 5 \sum_{j=1}^{+\infty}
 \left(\frac23\right)^{L+2+j(m-1)},
\]
which tends to zero with $L\rightarrow+\infty$.  Finally,
\cref{prop:differentiable} gives
\[
 \mathcal{N}(\Phi(y_{q,m,L}))
 =\Phi'(y_{q,m,L})\frac{1}{2^m-1}.
\]
Letting $L\to+\infty$ proves \eqref{eq:grid-in-closure}.
\end{proof}

\begin{theorem}\label{thm:image-closure}
One has
\begin{equation}\label{eq:image-closure}
 \overline{\{\mathcal{N}(\alpha):\alpha>1,
 \ \mathcal{N}(\alpha)<+\infty\}}
 =[0,+\infty).
\end{equation}
Moreover, $0$ and $+\infty$ are actual values of
$\alpha\mapsto\mathcal{N}(\alpha)$.
\end{theorem}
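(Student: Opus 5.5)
The closure is automatically contained in $[0,+\infty)$, since we only take finite values. So the whole task is to show that every $c\in(0,+\infty)$ is a limit of finite values of $\mathcal{N}$. The remaining assertions are already available. The value $0$ is attained by \cref{thm:residual-zero} (or \cref{thm:ae-zero}). The value $+\infty$ is attained at $\zeta^\star(2)=\zeta(2)$ by \cref{cor:spikes} (equivalently \eqref{eq:zeta-examples}). Since $0$ is attained, it lies in the closure.

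For density, I will use the grid from \cref{prop:pf}: every number $S_q/(2^m-1)$ with $q,m\ge2$ lies in the closure. By \cref{prop:linear-slopes}, $S_q=2q+c_0+o(1)$. Hence $S_{q+1}-S_q\to2$ and $S_q\to+\infty$.

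Fix $c>0$ and $\varepsilon>0$. First choose $m$ large, with $2/(2^m-1)<\varepsilon/2$. Since $S_q\to\infty$, there is a smallest $q\ge2$ (beyond the range where $S_{q+1}-S_q\le3$ holds) such that $S_q/(2^m-1)\ge c$. If $c$ is at least the starting grid value, minimality of $q$ gives
\[
0\le \frac{S_q}{2^m-1}-c\le\frac{S_q-S_{q-1}}{2^m-1}\le\frac{3}{2^m-1}.
\]
This is less than $\varepsilon$ after enlarging $m$. For small $c$ I use the same argument: the starting grid value is $S_{q_0}/(2^m-1)$, where $q_0$ is fixed independently of $m$ (the threshold beyond which consecutive gaps are at most $3$). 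This starting value tends to $0$ as $m\to\infty$, so for $m$ large it lies below $c$ and the step-by-step argument applies. Thus every $c>0$ is within $\varepsilon$ of a point of the closed set, which gives \eqref{eq:image-closure}.

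The only step needing care is uniformity. I must make sure that the gaps $S_{q+1}-S_q$ are bounded uniformly for all $q\ge q_0$. This follows from $S_q=2q+O(1)$: the error term is bounded by a single constant $C$, so every gap is at most $2+2C$. Replacing $3$ by $2+2C$ above, and taking $q_0=2$, removes any need for a threshold. I also need positivity, so that the grid values are strictly positive and finite. This holds because $S_q\ge1$ from the series in the proof of \cref{prop:linear-slopes}. No other obstacle is expected.
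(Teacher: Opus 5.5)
Your proposal is correct and follows essentially the same route as the paper. Both arguments get $0$ and $+\infty$ as attained values from \cref{thm:residual-zero} and $\zeta(2)$, and both obtain density from the grid $S_q/(2^m-1)$ of \cref{prop:pf} combined with $S_q=2q+O(1)$ from \cref{prop:linear-slopes}; the only difference is that the paper picks $q_m$ as the nearest integer to $\tfrac t2(2^m-1)$, while you use a first-crossing argument with uniformly bounded gaps (in fact $S_q-S_{q-1}=2^{q+2}A_q\le 2$ by \cref{estimate}).
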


\begin{proof}
The value $0$ is attained because \cref{thm:residual-zero} gives a nonempty
zero set.  The value $+\infty$ is attained at $\zeta(2)$ by
\eqref{eq:zeta-examples}.
Let $C$ be the closed set on the left of \eqref{eq:image-closure}.  By
\cref{prop:pf}, it contains
\[
 \frac{S_q}{2^m-1}
 \qquad(q,m\ge2).
\]
Fix $t>0$.  As $m\to+\infty$, choose an integer $q_m$ with
\[
 \left|q_m-\frac{t}{2}(2^m-1)\right|\le\frac12.
\]
Using $S_q=2q+O(1)$,
\[
 \frac{S_{q_m}}{2^m-1}
 =\frac{2q_m}{2^m-1}+O(2^{-m})\to t.
\]
Hence $t\in C$.  Taking a fixed $q$ and letting $m\to+\infty$ shows that
$0\in C$.  Nonnegativity gives \eqref{eq:image-closure}.
\end{proof}

\section{Summary and further  questions}\label{sec:conclusion}

The main results reveal some very different properties.
The generic value $0$ coexists with a dense countable set of evaluations  and with explicit positive values at every integer.  This explains the nowhere continuity established in \cref{thm:nowhere-continuous}.  The image
closure theorem is stronger than the existence of many isolated examples but
weaker than a complete description of the image itself.

The arguments assembled here leave several natural questions unresolved.
\begin{enumerate}[leftmargin=2.3em]
\item Improve the linear factor in \eqref{eq:linear-cylinder}; a uniform or
sublinear bound on significant classes of sequences would strengthen the metric
theory.
\item Determine the exact finite-valued image
\[
 \{\mathcal{N}(\alpha):\alpha>1,\ \mathcal{N}(\alpha)<+\infty\},
\]
not merely its closure.  In particular, the arguments above do not decide
whether every $t\ge0$ is attained.
\item Express $\mathcal{N}(\eta({\bf k}))$ directly from  general
infinite sequences ${\bf k}$, including rational numbers or
algebraic numbers.
\item Determine Hausdorff dimensions and multifractal spectra of level sets
such as $\{\alpha:\mathcal{N}(\alpha)=t\}$ and
$\{\alpha:\mathcal{N}(\alpha)\ge t\}$.
\end{enumerate}

\section*{Acknowledgements}
This project is funded by the National Natural Science Foundation of China
  (Grant No.12571009) and the Natural Science Foundation of Hunan
  Province, China (Grant No.2026JJ40003).

\end{document}